\newtheorem{thm}{Theorem}[section]
\newtheorem{prop}[thm]{Proposition}
\newtheorem{cor}[thm]{Corollary}
\newtheorem{con}[thm]{Conjecture}
\newtheorem{lemma}[thm]{Lemma}
\newtheorem{claim}[thm]{Claim}
\theoremstyle{definition}
\newtheorem{define}[thm]{Definition}
\theoremstyle{remark}
\newtheorem*{remark}{Remark}
\newtheorem*{example}{Example}
\numberwithin{equation}{section}
\numberwithin{figure}{section}
\DeclareMathOperator{\Aut}{Aut}
\DeclareMathOperator{\Crit}{Crit}
\DeclareMathOperator{\CommCrit}{CommCrit}
\DeclareMathOperator{\crit}{crit}
\DeclareMathOperator{\MCG}{MCG}
\DeclareMathOperator{\cl}{cl}
\DeclareMathOperator{\scl}{scl}
\DeclareMathOperator{\tr}{tr}
\DeclareMathOperator{\rk}{rk}
\newcommand{\QQ}{\mathbb{Q}}
\newcommand{\EE}{\mathbb{E}}
\newcommand{\CC}{\mathbb{C}}
\newcommand{\ZZ}{\mathbb{Z}}
\newcommand{\imto}{\looparrowright}
\newcommand{\s}{\sigma}
\renewcommand{\S}{\Sigma}
\newcommand{\A}{\mathcal{A}}
\newcommand{\onto}{\twoheadrightarrow}
\newcommand{\into}{\hookrightarrow}
\newcommand{\grp}[1]{\langle #1 \rangle}
\newcommand{\Ga}{\Gamma}
\newcommand{\Om}{\Omega}
\title{Word measures on unitary groups: improved bounds for small representations}
\author{Yaron Brodsky}
\date{\today}
\begin{document}
\maketitle

\begin{abstract}
Let $F$ be a free group of rank $r$ and fix some $w\in F$. For any compact group $G$ we can define a measure $\mu_{w,G}$ on $G$ by (Haar-)uniformly sampling $g_1,...,g_r\in G$ and evaluating $w(g_1,...,g_r)$. In \cite{PuderMageeUn}, Magee and Puder studied the case where $G$ is the unitary group $U(n)$, and analyzed how the moments of $\mu_{w,U(n)}$ behave as a function of $n$. In particular, they obtained asymptotic bounds on those moments, related to the commutator length and the stable commutator length of $w$. We continue their line of work and give a more precise analysis of the asymptotic behavior of the moments of $\mu_{w, U(n)}$, showing that it is related to another algebraic invariant of $w$: its primitivity rank.

In addition, we prove a special case of a conjecture of Hanany and Puder (\cite[Conjecture 1.13]{Hanany}) regarding the asymptotic behaviour of expected values of irreducible characters under $\mu_{w, U(n)}$.
\end{abstract}

{
  \setcounter{tocdepth}{2}
  \hypersetup{linkcolor=black}
  \tableofcontents
}

\section{Introduction}
Let $F_r$ be the free group on $r$ generators. Given a word $w\in F_r$, we can define, for any compact group $G$, a measure $\mu_{w, G}$, by choosing $r$ elements of $G$ Haar-uniformly and independently and plugging into $w$. In other words, $w$ induces a map $G^r \to G$ by $(g_1,...,g_r)\mapsto w(g_1,...,g_r)$, and we consider the push-forward of the Haar measure on $G^r$ by this map. It can be shown that $\mu_{w,G}=\mu_{\phi(w),G}$ for any $\phi\in \Aut(F_r)$ (see \cite[Fact 2.5]{MageeePuder2015} for a proof). Namely, $\mu_{w,G}$ only depends on the orbit of $w$ under the action of $\Aut(F_r)$. The converse statement is a well-known conjecture in the field of word measures and word maps (see \cite[Conjecture 1.3]{MPSurfaceWords}) :

\begin{con}
If $\mu_{w, G} = \mu_{w', G}$ for every compact group $G$, then $w$ and $w'$ are in the same $\Aut(F_r)$-orbit.
\end{con}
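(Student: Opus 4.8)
The converse implication of Conjecture 1.1 is, as the surrounding text indicates, a well-known open problem, so what follows is an attack plan rather than a complete argument. The natural strategy is to treat the family $\{\mu_{w,G}\}_{G}$ as a reservoir of $\Aut(F_r)$-invariants of $w$ and to try to extract from it enough data to reconstruct the orbit of $w$. Since each $\mu_{w,G}$ is conjugation-invariant, it is determined by the numbers $\EE_{\mu_{w,G}}[\chi]$ as $\chi$ ranges over $\Irr(G)$; and by Peter--Weyl together with the fact that every compact group embeds in a product of unitary groups, one expects --- though even this reduction requires care --- that it is enough to know $\mu_{w,U(n)}$ for all $n$, or perhaps even to know $\mu_{w,S_n}$ for all $n$, where the relevant quantities $\EE_{\mu_{w,S_n}}[\fix]$ and their refinements are purely combinatorial.

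Step one would be to catalogue the invariants already visible in the moments. Magee--Puder, and the present paper, show that the large-$n$ asymptotics of moments such as $\EE_{\mu_{w,U(n)}}\!\left[\tr(w^a)\,\overline{\tr(w^b)}\right]$ detect whether $w=1$, whether $w$ is a proper power, the commutator length and stable commutator length of $w$, and --- the new contribution --- the primitivity rank $\pi(w)$ together with the number of critical subgroups. Step two would be to go beyond leading order: expand each moment as an exact rational function of $n$ via Weingarten/Schur--Weyl calculus and read off, term by term, the combinatorial objects indexing the contributions, namely finitely generated subgroups of $F_r$ together with their core (Stallings) graphs and the way $\grp{w}$ sits inside them. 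Step three would then be to argue that this entire tower of subgroup-with-embedding data is a \emph{complete} $\Aut(F_r)$-invariant --- for instance by showing it pins down the Whitehead / algebraic-extension structure of $\grp{w}$ in $F_r$ finely enough to exhibit a primitive-completing change of basis whenever one exists, and in general to separate distinct orbits.

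The main obstacle is exactly step three: it is a completeness statement, and none is known. The word-measure data is known to capture a steadily growing list of individual invariants, but there is no mechanism in sight for assembling them into the $\Aut(F_r)$-orbit; and $\Aut(F_r)$-orbits are not effectively classified for $r\ge 3$, so even "knowing all the invariants" does not obviously terminate the argument. I therefore would not expect this plan to settle the conjecture. The realistic outcome --- and what this paper in fact achieves --- is to enlarge the list of recoverable invariants (adding $\pi(w)$), thereby shrinking the class of hypothetical pairs $w,w'$ that could witness a counterexample, while leaving the full separation of orbits as the genuinely hard, still-open core.
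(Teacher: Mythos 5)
You have correctly identified that this statement is an open conjecture: the paper states it only as motivation (citing its stronger finite-group form in the literature) and offers no proof, so there is nothing in the paper to compare your argument against. Your assessment of the situation is accurate --- the word-measure data is known to recover a growing list of $\Aut(F_r)$-invariants (triviality, proper powers, $\cl$, $\scl$, and now $\pi(w)$ with $|\CommCrit(w)|$), but the completeness step you isolate as ``step three'' is precisely the missing ingredient, and no mechanism is currently known for assembling these invariants into the full orbit; your proposal is an honest research plan, not a proof, and you say so, which is the right conclusion here.
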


\begin{remark}
This conjecture usually appears in the literature in a stronger form, where one only assumes $w$ and $w'$ induce the same measure on all \emph{finite} groups. See \cite[Question 2.2]{AV}, \cite[Conjecture 4.2]{ShalevWordMaps} and \cite[Section 8]{PuderParWordMeasures}.
\end{remark}

One approach to this problem, which is also a question of independent interest, is to study the Fourier coefficients, or moments, of $\mu_{w,G}$. By this we mean that we study $\int_G f d\mu_{w, G}$ for various functions $f$. Since $\mu_{w,G}$ is easily seen to be invariant under conjugation, it is in fact enough to consider class functions on $G$ (that is, functions whose value at an element depends only at its conjugacy class). It turns out that in many cases, the coefficients, or moments, encode interesting algebraic properties of $w$. For example, the case of $G = S_n$ was studied in \cite{PuderParWordMeasures} and in \cite{Hanany}, where it was shown that the behavior of the moments of $w$ on $S_n$ for large $n$ depends on a certain notion called the \emph{primitivty rank} of $w$ (first introduced in \cite{PuderPiw}, see Definition \ref{def:piw}). The case of $G = U(n)$ was studied by Magee and Puder in \cite{PuderMageeUn}, where the behaviour of the moments were shown to be related to the classical notion of the commutator length of $w$ (see Definition \ref{def:cl}).

In this article, we continue the line of work started in \cite{PuderMageeUn} and focus on the case of unitary groups, $G = U(n)$. Magee and Puder showed that the moments of $\mu_{w,U(n)}$ are rational functions of $n$ and a geometric description of their Laurent series expansions (in $\frac1n$) was given. We use this description to analyze the asymptotic behavior and dominant terms of those moments. Before stating our results, we recall the definitions of primitivity rank and commutator length, and introduce the notion of commutator-critical subgroups:
\begin{define}\label{def:piw}
The \emph{primitivity rank} of a word $w\in F_r$, denoted $\pi(w)$, is the minimal rank of a subgroup $H \le F_r$ containing $w$ as a non-primitive element. If there are no such subgroups we set $\pi(w) = \infty$.
\end{define}

\begin{example}
For a primitive element, $\pi(w)=\infty$ (see \cite[Lemma 4.1]{PuderPiw}). We have that $\pi(w) = 0$ if and only if $w = 1$. The next simplest case is $\pi(w)=1$ --- this holds precisely when $w$ is a proper power. Indeed, $w=u^d$ with $|d|\ge 2$ if and only if it $w\neq 1$ and $w$ is imprimitive in $\grp{u}$. Finally, all non-primitive elements of $F_r$ clearly satisfy $\pi(w) \le r$.
\end{example}

\begin{define}\label{def:cl}
The \emph{commutator length} of a word $w\in F_r$, denoted $\cl(w)$, is the least possible $g$ for which we can write $w$ as a product of $g$ commutators. If $w\notin [F_r, F_r]$ we set $\cl(w)=\infty$.
\end{define}

\begin{define}\label{def:commcrit}
A \emph{commutator-critical subgroup} of $w$ is a subgroup $H \le F_r$ of rank $\pi(w)$ containing $w$ as a standard surface word --- that is, there is a basis $u_1,v_1,...,u_{\pi(w)/2},v_{\pi(w)/2}$ of $H$ such that
$$
w = [u_1, v_1] \cdots [u_{\pi(w)/2}, v_{\pi(w)/2}].
$$
(In particular, there are no commutator-critical subgroups if $\pi(w)$ is odd.) We denote the set of all commutator-critical subgroups of $w$ by $\CommCrit(w)$.
\end{define}

\begin{remark}
Compare this with the notion of \emph{critical} subgroups, introduced in \cite{PuderPiw}: a critical subgroup of $w$ is a subgroup of rank $\pi(w)$ containing $w$ as a non-primitive element.
\end{remark}

\begin{remark}
The cardinality of $\CommCrit(w)$ can be described as the number of ways to write $w$ as a product of $\pi(w)/2$ commutators of $\pi(w)$ free words, up to a certain equivalence relation - see Subsection \ref{subsec:alg-intro}.
\end{remark}

Note that if $w=[u_1, v_1]\cdots [u_g, v_g]$, then $w$ is a non-primitive element of the subgroup generated by the $u_i$ and $v_i$. Since $\rk \grp{u_1,\dots,u_g, v_1\dots, v_g}\le 2g$, we have:
\begin{claim}\label{prop:picl}
For any $w\in F_r$, $\pi(w) \le 2\cl(w)$. In particular, $w$ has no commutator-critical subgroups unless $\pi(w) = 2\cl(w)$.
\end{claim}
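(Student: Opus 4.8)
The plan is to make precise the two ingredients indicated in the remark preceding the statement. For the inequality $\pi(w)\le 2\cl(w)$, I would first dispose of the degenerate cases: if $\cl(w)=\infty$ there is nothing to prove, and if $w=1$ then $\pi(w)=0=2\cl(w)$. So assume $1\neq w=[u_1,v_1]\cdots[u_g,v_g]$ with $g=\cl(w)<\infty$, and set $H=\grp{u_1,\dots,u_g,v_1,\dots,v_g}\le F_r$. Being a subgroup of a free group, $H$ is free of some rank $k\le 2g$; moreover $k\ge 1$ since $w\in H$ is nontrivial. The one step that needs an argument is that $w$ is non-primitive in $H$: by construction $w\in[H,H]$, so its image in the abelianization $H^{\mathrm{ab}}\cong\ZZ^{k}$ is $0$; but if $w$ were primitive in $H$ it would extend to a free basis of $H$, whose image is a $\ZZ$-basis of $\ZZ^{k}$, and in particular the image of $w$ would be nonzero. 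Hence $w$ is a non-primitive element of a subgroup of rank $\le 2g$, and therefore $\pi(w)\le 2g=2\cl(w)$.

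For the "in particular" clause, suppose $H\in\CommCrit(w)$. By Definition \ref{def:commcrit} we have $\rk H=\pi(w)$ and $w$ is literally a product of $\pi(w)/2$ commutators of elements of $H\le F_r$, hence of elements of $F_r$; thus $\cl(w)\le\pi(w)/2$. Combined with the inequality just established, $2\cl(w)\le\pi(w)\le 2\cl(w)$, forcing $\pi(w)=2\cl(w)$. Contrapositively, $\CommCrit(w)=\emptyset$ whenever $\pi(w)\neq 2\cl(w)$ — in particular whenever $\pi(w)$ is odd, recovering the parenthetical remark in Definition \ref{def:commcrit}.

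There is no real obstacle here; the proof is essentially an unwinding of definitions. The only points deserving a moment's care are the degenerate cases ($w=1$ and $\cl(w)=\infty$), the harmless fact that the chosen generators $u_i,v_i$ need not be independent, so that possibly $\rk H<2g$ (we only ever use the upper bound $\rk H\le 2g$), and the elementary observation that a nontrivial element of $[H,H]$ cannot be primitive in $H$, which is precisely what links "$w$ is a product of $g$ commutators" to "$w$ is non-primitive in a rank-$2g$ subgroup" in the definition of $\pi(w)$.
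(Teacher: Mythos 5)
Your proof is correct and follows essentially the same route as the paper, which deduces the inequality from the observation that $w=[u_1,v_1]\cdots[u_g,v_g]$ is a non-primitive element of $\grp{u_1,\dots,u_g,v_1,\dots,v_g}$, a subgroup of rank at most $2g$; you merely spell out the abelianization argument for non-primitivity, the degenerate cases, and the reverse inequality $2\cl(w)\le\pi(w)$ forced by a commutator-critical subgroup, all of which the paper leaves implicit.
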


We now introduce some notation. Denote (by an abuse of notation) $\mu_w = \mu_{w,U(n)}$, where we think of $n$ as a parameter. For any family of functions $f=(f_n)_{n=1}^{\infty}$ (where $f_n
\in L^2(U(n))$) we write $\EE_w[f] = \int_{U(n)} f d\mu_w$ (note that this is actually a function of $n$). For any nonzero $m\in\ZZ$, let $\xi_m$ be the family of class functions on $U(n)$ defined by
$$
\xi_m(A):= \tr(A^m).
$$
(Note that since $A^{-1} = A^*$, we have that $\xi_{-m}(A) = \overline{\xi_m(A)}$.)

The mixed moments of $\mu_w$ are then
$$
\EE_w[\xi_{m_1}\cdots\xi_{m_\ell}] = \int_{U(n)} \tr(A^{m_1}) \cdots \tr(A^{m_\ell}) d\mu_w(A).
$$

\begin{remark}
In the notation of \cite{PuderMageeUn}, this is $\mathcal{T}r_{w^{m_1},\cdots, w^{m_\ell}}$.
\end{remark}

Finally, observe that there is a natural inner product $\grp{\cdot,\cdot}_n$ on $L^2(U(n))$. Consider $f=\xi_{m_1}\cdots \xi_{m_\ell}$ and $g=\xi_{n_1}\cdots \xi_{n_k}$ for some $m_1,...,m_\ell, n_1,...,n_k\in\ZZ$. According to \cite[Theorem 2]{DiaconisShershahani}, the inner product $\grp{f,g}_n$ (when $f$ and $g$ are considered as functions on $U(n)$) is independent of $n$ for large enough $n$. Hence we denote by $\grp{f, g}$ the limit $\lim_{n\to\infty} \grp{f,g}_n$ and think of it as a constant.

Magee and Puder showed that each $\EE_w[\xi_{m_1}\cdots\xi_{m_\ell}]$ is a rational functions of $n$, for large enough $n$. Furthermore, it is shown that it admits the following remarkable power series expansion, for large enough $n$:
$$
\EE_w[\xi_{m_1}\cdots \xi_{m_\ell}] = \sum_{(\S, f)\; \text{admissible}} \chi^{(2)}(\MCG(f))\cdot n^{\chi(\S)},
$$
where the sum is taken over all equivalence classes of \emph{admissible pairs} $(\S, f)$, which are pairs of a compact oriented surface $\S$ with $\ell$ boundary components $\partial_1, ..., \partial_\ell$, and a map $f:\S\to\bigvee^r S^1$ mapping $\partial_i$ to $w^{m_i}$ (w.r.t. some fixed base points on the boundaries). The terms $\chi^{(2)}(\MCG(f))$ are certain coefficients depending on $(\S, f)$. We review this in more depth in Section \ref{sec:recap}.

In particular, the behavior of $\EE_w[\xi_1] = \EE_w[\tr]$ is bounded by the largest possible Euler characteristic of an admissible surface representing $w$. It is a classical result that the genus of such a surface is exactly $\cl(w)$ (see for example \cite[Proposition 1.1]{Culler}). Hence,
$$
\EE_w[\tr] = O(n^{1-2\cl(w)}).
$$

This is \cite[Corollary 1.8]{PuderMageeUn}. They also describe the coefficient of $n^{1-2\cl(w)}$: it is a certain weighted count of the number of solutions to $w=[u_1, v_1]\cdots [u_g, v_g]$ for $g=\cl(w)$.

\subsection{Main results}

We use the power series expansion of \cite{PuderMageeUn} to give a more accurate analysis of the asymptotic behaviour of the moments of $\mu_w$:

\begin{thm}\label{thm:EwTpi}
Let $w \in F_r$ be a non-power, let $m_1,...,m_\ell \in \ZZ$ be nonzero integers, and denote $T=\xi_{m_1}\cdots \xi_{m_\ell}$. Then for large enough $n$,
$$
\EE_w[T] = \grp{T, 1} + \left(\grp{T, \tr}+\grp{T,\overline{tr}}\right)\cdot |\CommCrit(w)|\cdot n^{1-\pi(w)} + O(n^{-\pi(w)}).
$$
\end{thm}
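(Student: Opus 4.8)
The plan is to analyze the power series expansion
$$
\EE_w[T] = \sum_{(\S, f)\ \text{admissible}} \chi^{(2)}(\MCG(f))\cdot n^{\chi(\S)}
$$
term by term, grouped according to the Euler characteristic $\chi(\S)$. First I would isolate the leading term: the admissible pairs $(\S, f)$ with $\chi(\S)$ maximal. Since $w$ is a non-power and each $m_i \neq 0$, each boundary word $w^{m_i}$ is nontrivial, so $\S$ has no disk components, and one checks (as in \cite{PuderMageeUn}) that the maximal contribution comes from surfaces $\S$ that are disjoint unions of annuli and of components covering single boundary circles trivially; these precisely account for the term $\grp{T, 1}$ (the ``vacuum'' contribution, i.e. the constant in $n$). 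So after peeling off $\grp{T,1}$, everything else is $O(n^{-1})$, and I must identify which admissible pairs contribute at order exactly $n^{1-\pi(w)}$ and sum their weights.

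Next I would set up the dictionary between admissible pairs and the combinatorial-group-theoretic data. An admissible pair $(\S, f)$ with $f$ sending $\partial_i \mapsto w^{m_i}$ decomposes (up to equivalence) into connected pieces, and each connected piece $\S_0$ with boundary components mapping to $w^{m_{i_1}},\dots,w^{m_{i_s}}$ corresponds, via $\pi_1$, to a subgroup $H = f_*(\pi_1(\S_0)) \le F_r$ together with a way of writing (a conjugate of) the product of the relevant $w^{m_{i_j}}$ as a product of commutators in $H$; the exponent contributed is $\chi(\S_0) = 1 - \rk$ or lower depending on genus. The key structural input is Claim \ref{prop:picl}: a connected piece with $\chi(\S_0) \geq 1 - \pi(w) + 1 = 2 - \pi(w)$ forces $w$ (hence each $w^{m_i}$, using that $w$ is a non-power so $w^{m_i}$ and $w$ generate the same cyclic-closure data) to be primitive in the associated subgroup, which kills the genus and again lands in the ``trivial'' regime. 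Thus the first genuinely new contributions at order $n^{1-\pi(w)}$ come from pairs where exactly one connected piece $\S_0$ is a once-punctured surface of genus $g$ with $2g = \pi(w)$ — i.e. $\S_0$ is a standard genus-$g$ surface with one boundary component mapping to some $w^{\pm m_i}$ — and all other pieces are annuli/trivial covers. The subgroup $H = f_*(\pi_1 \S_0)$ has rank $\pi(w)$ and contains $w^{\pm m_i}$ as a standard surface word; I would argue this forces $H$ to contain $w^{\pm 1}$ as a standard surface word (again using the non-power hypothesis to pass between $w^{m_i}$ and $w$), so $H \in \CommCrit(w)$, and conversely each $H \in \CommCrit(w)$ paired with an annulus-choice for the remaining boundaries gives such a term.

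Then I would count. For a fixed choice of which boundary $\partial_i$ the genus-$g$ piece attaches to (with a sign, giving the $w^{m_i}$ vs $w^{-m_i}$ alternative), fixed $H \in \CommCrit(w)$, and fixed pairing of the remaining $\ell - 1$ boundaries into annuli (or trivial pieces), the product of the $\chi^{(2)}(\MCG(f))$-weights factors: the annulus/trivial pieces reassemble, over all choices, into exactly the inner-product coefficient $\grp{T, \xi_{m_i}}$ or $\grp{T, \xi_{-m_i}} = \grp{T, \overline{\tr}}$-type quantities, by the same inner-product interpretation of admissible-surface counts that yields $\grp{T,1}$ at the top level (this is essentially the Diaconis–Shahshahani identity \cite[Theorem 2]{DiaconisShershahani} recast geometrically). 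Summing the ``surface word'' contributions over $i$ and over $H \in \CommCrit(w)$, and checking the genus-$g$ once-punctured piece contributes weight $1$ after normalization (its mapping class group factor is accounted for by the standard count of surface-word representations, matching the definition of $\CommCrit(w)$), collapses the sum to $\bigl(\grp{T,\tr} + \grp{T,\overline{\tr}}\bigr)\cdot|\CommCrit(w)|$. All remaining admissible pairs have $\chi(\S) \leq -\pi(w)$, giving the error term.

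\medskip

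The main obstacle I anticipate is the bookkeeping in the last step: showing that the weights $\chi^{(2)}(\MCG(f))$ of the mixed (genus piece) $\oplus$ (annuli) configurations factor cleanly and that the annuli-side sum reproduces precisely $\grp{T,\xi_{\pm m_i}}$ — i.e., that there is no cross-term interaction between the distinguished genus-$g$ piece and the rest, and that the $\ell=1$ edge cases (where $T = \xi_{m_1}$ and $\grp{T,1} = 0$) and cases with repeated $m_i$'s are handled uniformly. A secondary subtlety is the careful use of the ``non-power'' hypothesis to ensure that a subgroup containing $w^{m_i}$ as a standard surface word of minimal rank indeed contains $w$ itself that way, so that the primitivity rank $\pi(w)$ — defined via $w$, not $w^{m_i}$ — is the correct exponent; this is where I would need a lemma relating $\pi(w^m)$ and $\CommCrit(w^m)$ to $\pi(w)$ and $\CommCrit(w)$ for non-powers $w$, presumably already available from \cite{PuderPiw} or provable by a standard Stallings-graph argument.
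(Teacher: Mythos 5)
Your outline reproduces the soft parts of the argument (the Magee--Puder expansion, decomposition into connected components, and the final bookkeeping where Diaconis--Shahshahani reassembles the annuli into $\grp{T,\tr}+\grp{T,\overline{\tr}}$ and genus-$\cl(w)$ one-boundary pieces are counted by $|\CommCrit(w)|$ --- that last step matches the paper's Propositions \ref{prop:CTw} and \ref{prop:Cw}), but the two hard inputs are missing. First, you never invoke a vanishing criterion for the coefficients $\chi^{(2)}(\MCG(f))$. The term $\grp{T,1}$ is \emph{not} ``the maximal-$\chi$ contribution'': it is the total contribution, over all orders of $n$, of the pairs whose map factors through the circle representing $w$ (e.g.\ an annulus with a handle attached contributes at order $n^{-2}$ and belongs to this family). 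Peeling it off correctly, and discarding everything whose groupoid image is neither inside $\grp{w}$ nor an algebraic extension of a power of $w$, is exactly the content of Theorem \ref{thm:needalg} and Corollary \ref{cor:EwT1}, which rest on the ``definable by matchings'' criterion and on working with the relative fundamental groupoid $\Pi_1(\S,V)$ rather than $f_*\pi_1(\S_0)$ (with several basepoints the two differ, and the rank bookkeeping is off by $\ell-1$).

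Second, and more seriously, your ``key structural input'' is wrong: Claim \ref{prop:picl} is a numerical inequality between $\pi(w)$ and $\cl(w)$ and does not show that the only non-trivial connected components attaining $\chi(\S_0)=1-\pi(w)$ are once-punctured surfaces with boundary $w^{\pm1}$. The naive rank count ($\rk\grp{f_*\Pi_1(\S_0,V)}\le\rk\pi_1(\S_0/V)$, corrected by the boundary words lying in $\grp{w}$) only yields the weaker Theorem \ref{thm:EwT}; it leaves open, for instance, a connected component with two boundary circles mapped to $w$ and $w^{-1}$ and with a rank-$\pi(w)$ algebraic-extension image sitting exactly at $\chi=1-\pi(w)$, as well as a one-boundary piece whose boundary maps to $w^{m}$ with $|m|\ge 2$. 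Excluding these is precisely what the paper's Proposition \ref{prop:chiS} does, and its proof needs genuinely new machinery: the Louder--Wilton dependence theorem (giving $\chi(\Ga)\le\chi(\Ga_W)+1-\sum|m_i|$, so that total boundary degree $\ge 2$ forces $\chi<1-\pi(w)$), Proposition \ref{prop:Gw} identifying $\pi_1(\Ga_W)$'s image with $\grp{f_*\Pi_1(\S,V),w}$, and the forbidden-matchings surgery (Proposition \ref{prop:simplify}) to reduce to the case where the hypothesis $E_P\to E_\Ga\times E_S$ is injective. Your proposed substitute --- a lemma saying that a rank-$\pi(w)$ subgroup containing $w^{m}$ as a standard surface word must contain $w$ as one --- is not in \cite{PuderPiw} and is not a routine Stallings-graph fact; in the paper this case is eliminated geometrically (by the degree term in the dependence theorem), not algebraically. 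As written, the proposal therefore has a genuine gap at the step that separates the $n^{1-\pi(w)}$ coefficient from the $O(n^{-\pi(w)})$ error.
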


In particular, by Claim \ref{prop:picl}, if $\pi(w) < 2\cl(w)$ we have:
$$
\EE_w[T] = \grp{T, 1} + O(n^{-\pi(w)})
$$

\begin{remark}
In \cite[Theorem 1.3]{Hanany}, a similar result is proven, for the case of $S_n$ instead of $U(n)$. However, the method of proof is different. See Subsection \ref{subsec:othergrps} for further details.
\end{remark}
\begin{remark} Observe that, for large enough $n$, $\grp{T, 1} = \grp{T,1}_n = \EE_{unif}[T]$, the Haar-uniform expectation of $T$. This is also equivalent to $\EE_x[T]$, the expectation of $T$ under the word measure of a primitive word $x\in F_r$. Our proof will use this latter interpretation.
\end{remark}

A key role in our proof is played by a dependence theorem by Louder and Wilton, proved in \cite{WiltonLouder}. We also prove a slightly weaker result using a simpler, more direct method. We decided to include the latter in the article since the proof is based on a different idea, which we believe might be interesting on its own:

\begin{thm}\label{thm:EwT}
Let $w \in F_r$ and let $m_1,...,m_\ell \in \ZZ$ be nonzero integers, and let $T$ denote $\xi_{m_1}\cdots \xi_{m_\ell}$. Then
$$
\EE_w[T] = \grp{T, 1} + O(n^{1-\pi(w)}).
$$
\end{thm}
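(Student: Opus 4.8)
The plan is to work directly with the power series expansion of Magee and Puder,
$$
\EE_w[T] = \sum_{(\S, f)\; \text{admissible}} \chi^{(2)}(\MCG(f))\cdot n^{\chi(\S)},
$$
and show that every admissible pair $(\S,f)$ with $\chi(\S) \ge 2-\pi(w)$ either contributes to the constant term $\grp{T,1}$ or contributes nothing at all. Since $\grp{T,1} = \EE_x[T]$ for a primitive word $x$, and the expansion for $\EE_x[T]$ is also a sum over admissible pairs, the idea is to set up a correspondence between the ``large'' admissible pairs for $w$ and the admissible pairs for $x$ that respects the coefficients $\chi^{(2)}(\MCG(f))$. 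First I would recall (from Section \ref{sec:recap}) that an admissible pair can be taken to factor through its core: the map $f:\S\to\bigvee^r S^1$ induces, after possibly compressing, a map to a based graph $\Gamma_f$ with $\pi_1(\Gamma_f)$ of rank $\rk(\Gamma_f) = 1 - \chi(\Gamma_f)$, and the combinatorics force $\chi(\S) \le \chi(\Gamma_f)$ roughly speaking, with the boundary words $w^{m_i}$ all lying in the subgroup $H = \langle \mathrm{image}\rangle \le F_r$ carried by $\Gamma_f$.

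The key algebraic input is the following dichotomy for the subgroup $H \le F_r$ carried by such a pair: since each boundary $\partial_i$ maps to $w^{m_i}$, the element $w$ lies in $H$ (as $\gcd$ considerations or simply because $w^{m_i}\in H$ and one can enlarge $H$ harmlessly to contain $w$ itself), and an Euler characteristic count shows $\rk(H) \le \pi(w) - 1$ whenever $\chi(\S) > 1 - \pi(w)$, while $\rk(H)\le \pi(w)$ when $\chi(\S) = 1-\pi(w)$. In the first case, the definition of primitivity rank (Definition \ref{def:piw}) forces $w$ to be \emph{primitive} in $H$ — so on the subgroup $H$, the restriction of $w$ is, up to $\Aut(H)$, a free generator, and hence the corresponding admissible pairs and their $\chi^{(2)}(\MCG(f))$-weights are literally the same as those appearing in the expansion of $\EE_x[T]$ for a primitive $x$. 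Summing these contributions reproduces exactly $\grp{T,1}$. In the boundary case $\chi(\S)=1-\pi(w)$, i.e. the coefficient of $n^{1-\pi(w)}$, one must also allow $\rk(H) = \pi(w)$ with $w$ non-primitive in $H$; here the surface $\S$ is forced to be an annulus (one boundary component wrapping, the rest being caps) together with the critical combinatorial data, contributing $\grp{T,\tr}+\grp{T,\overline{\tr}}$ times a count of such $H$ — but for Theorem \ref{thm:EwT} we do not even need to identify this coefficient, only to bound it by $O(n^{1-\pi(w)})$, which is immediate since there are finitely many admissible pairs at each Euler characteristic.

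The main obstacle is making the Euler-characteristic-to-rank inequality precise and verifying that when $\chi(\S) > 1-\pi(w)$ the carried subgroup genuinely has rank $< \pi(w)$ — this requires care about how much $\chi$ can drop when passing from the graph $\Gamma_f$ (or the core of $H$) to the surface $\S$, and about the fact that the boundary components carry powers $w^{m_i}$ rather than $w$ itself (one must check that $w^{m_i}\in H$ for all $i$ together with $w$ being a non-power implies $w\in H$, or argue directly with $\langle H, w\rangle$ whose rank is at most $\rk(H)$). Once the inequality $\chi(\S) \le 2 - \rk\langle H,w\rangle$ is in hand, the argument closes: either $\rk\langle H,w\rangle \ge \pi(w)+1$, impossible for $\chi(\S)\ge 2-\pi(w)$ unless... — rather, every surviving pair has $\rk\langle H,w\rangle \le \pi(w)$, those with rank $\le \pi(w)-1$ force $w$ primitive and feed into $\grp{T,1}$, and the remaining finitely many with rank exactly $\pi(w)$ are absorbed into the $O(n^{1-\pi(w)})$ error. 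I would also remark that this direct approach sidesteps the Louder–Wilton dependence theorem used for the sharper Theorem \ref{thm:EwTpi}, at the cost of not pinning down the $n^{1-\pi(w)}$-coefficient.
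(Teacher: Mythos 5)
Your high-level skeleton (split the Magee--Puder expansion into ``trivial'' pairs assembling to $\grp{T,1}$ and remaining pairs whose Euler characteristic is forced below $1-\pi(w)$) is indeed the paper's strategy, but two of your key steps have genuine gaps. The first concerns pairs whose carried subgroup has rank $<\pi(w)$. From $\rk H<\pi(w)$ you correctly conclude that $w^d$ is primitive in $H$ (equivalently $\grp{w^d}$ lies in a proper free factor), but such pairs are \emph{not} ``literally the same as those appearing in the expansion of $\EE_x[T]$'': the pairs accounting for $\EE_x[T]=\grp{T,1}$ are exactly those whose map factors through a circle, i.e.\ those with \emph{cyclic} image. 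Pairs with non-cyclic image in which $w^d$ is primitive do exist with large $\chi(\S)$ --- e.g.\ for $(w,w^{-1})$ take the genus-one surface with two boundary components and the map sending $u_1\mapsto w$, $u_2\mapsto w^{-1}$, $t\mapsto 1$, $a,b\mapsto u$ for a generic $u$; its image is $\grp{w,u}$ of rank $2$ with $w$ primitive, and $\chi(\S)=-2>1-\pi(w)$ whenever $\pi(w)\ge 4$. Such pairs do not occur in the expansion for a primitive word, so your accounting cannot match them up; what is actually true is that they contribute \emph{zero}, and proving this is the content of Theorem \ref{thm:needalg}, which rests on the Magee--Puder criterion that pairs not definable by a collection of matchings have $\chi^{(2)}(\MCG(f))=0$, together with Lemma \ref{lem:mapsto} to change the basis of $H$. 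Your proposal contains no mechanism for this vanishing, and the alternative of grouping pairs by their subgroup $H$ and letting each group sum to $\grp{T,1}$ would overcount ($\grp{T,1}$ once per $H$).

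The second gap is the inequality you defer as ``the main obstacle'': it is precisely the hard step, and the version you finally posit, $\chi(\S)\le 2-\rk\grp{H,w}$, is too weak. A pair with $\rk\grp{H,w}=\pi(w)$ would then only be bounded by $n^{2-\pi(w)}$, which is not $O(n^{1-\pi(w)})$. The naive count via $\pi_1(\S/V)$ (whose rank is $\rk\pi_1(\S)+\ell-1$) gives only $\chi(\S)\le \ell-\rk H$; the paper recovers the lost $\ell-1$ by presenting $\pi_1(\S/V)$ on generators $a_i,b_i,u_j,t_j$ with the single surface relation and using that every boundary generator $u_j$ maps into $\grp{w}$, so that $H$ is generated by $2g+\ell$ elements satisfying a relation, whence $\rk H\le 2g+\ell-1$ and $\chi(\S)\le 1-\rk H\le 1-\pi(w)$ (Proposition \ref{prop:Scon}). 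You would also need the passage from $w^d$ to $w$ via $\grp{H,w}$ (Lemma \ref{lem:lemma2} / Claim \ref{clm:Hw}, which you do flag) and a per-component treatment of disconnected $\S$, which your single-subgroup bookkeeping does not address. As it stands, the proposal identifies the right objects but is missing both the vanishing theorem and the refined rank inequality that make the argument close.
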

\begin{remark}
This holds trivially if $w$ is a proper power, since $1-\pi(w)=0$ in this case.
\end{remark}

We also give an improved bound for the case $T=\xi_1 \xi_{-1}=\tr(A)\tr(A^{-1})$. It can be shown that $\grp{\xi_1 \xi_{-1}, 1} = 1$ (see Section \ref{sec:pfthm2}), and we prove:
\begin{thm}\label{thm:EwX2}
Let $w\in F_r$. For $T = \xi_1\xi_{-1} = \tr(A)\tr(A^{-1})$, we have
$$
\EE_w[T] = 1 + O(n^{2(1 - \pi(w)})).
$$
\end{thm}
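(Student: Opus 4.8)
The plan is to start from the power series expansion of Magee and Puder applied to $T=\xi_1\xi_{-1}$, i.e.
$$
\EE_w[\xi_1\xi_{-1}] = \sum_{(\S, f)\text{ admissible}} \chi^{(2)}(\MCG(f))\cdot n^{\chi(\S)},
$$
where now every admissible pair $(\S,f)$ consists of a compact oriented surface $\S$ with exactly two boundary components $\partial_1,\partial_2$, together with a map $f\colon\S\to\bigvee^r S^1$ sending $\partial_1$ to $w$ and $\partial_2$ to $w^{-1}$. Since $\grp{\xi_1\xi_{-1},1}=1$, the contribution to the constant term $n^0$ is exactly $1$, coming from the pairs where $\S$ is an annulus whose two boundary circles are glued along $w$ (this is the Euler-characteristic-$0$ contribution; it is where the "diagonal" admissible surface lives). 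So the theorem amounts to showing that every admissible pair with $0 > \chi(\S) > 2(1-\pi(w))$, i.e. with $\chi(\S)\in\{-1,-2,\dots,2\pi(w)-3\}$, contributes zero, equivalently that no such admissible pair exists with nonzero coefficient. Because $T$ has $\ell=2$ and $m_1=1$, $m_2=-1$, the combinatorics of admissible surfaces simplifies considerably compared to the general case, and I would exploit this.

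The key step is a doubling / capping argument. Given an admissible pair $(\S,f)$ with $\partial_1\mapsto w$ and $\partial_2\mapsto w^{-1}$, I would glue the two boundary components of $\S$ to each other (compatibly with the base points and with the identification of $w$ with $w^{-1}$) to obtain a closed surface $\widehat\S$ with $\chi(\widehat\S)=\chi(\S)$ and a map $\widehat f\colon\widehat\S\to\bigvee^r S^1$. The induced map on $\pi_1$ gives a subgroup $H=\widehat f_*(\pi_1\widehat\S)\le F_r$ of rank at most $1-\chi(\widehat\S)=1-\chi(\S)$ (since a closed orientable surface of Euler characteristic $\chi$ has first Betti number $2-\chi$ if connected, but the relevant bound on the rank of the image comes from the free quotient, giving $\rk H\le 1-\chi(\S)$ in the worst case — I would make this precise, tracking the connectedness and handle structure carefully). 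The element $w$ lies in $H$, and the geometry forces $w$ to be non-primitive in $H$ — essentially because $w$ bounds on both sides of the curve we glued along, so it is a proper power or a commutator-type relator in $\pi_1\widehat\S$; I would argue this using the standard fact that if $w$ is represented by an embedded non-separating or separating curve on a closed surface then it fails to be primitive in the image subgroup. Hence, by the definition of primitivity rank, $\pi(w)\le \rk H \le 1-\chi(\S)$, which rearranges to $\chi(\S)\le 1-\pi(w)$. This is not quite the bound I want; to sharpen $\chi(\S)\le 1-\pi(w)$ to $\chi(\S)\le 2(1-\pi(w))$ I would use the two-boundary-component structure more carefully: the surface $\S$ itself (before gluing) already retracts onto a wedge carrying $w$ and $w^{-1}$ on separate boundary circles, so $\S$ contains "two copies" of the data, and a more careful count — analogous to the $\cl$ versus genus computation behind $\EE_w[\tr]=O(n^{1-2\cl(w)})$ — doubles the defect. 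Concretely, I expect the right statement is that an admissible $\S$ for $\xi_1\xi_{-1}$ with $\chi(\S)>2(1-\pi(w))$ must have its image subgroup of rank $<\pi(w)$ on each "side", forcing $w$ primitive there, a contradiction unless $\chi(\S)=0$.

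The main obstacle, I expect, is making the "doubling halves the Euler characteristic defect" step rigorous — that is, showing that an admissible surface with boundary data $(w,w^{-1})$ genuinely behaves like two independent copies of an admissible surface with a single boundary $w$, so that the bound improves from $1-\pi(w)$ to $2(1-\pi(w))$. This requires understanding the structure of the core graph / the quotient $\pi_1$-image well enough to see that a subsurface on one side of a separating curve (the curve mapping to $w$) must itself support $w$ non-primitively in a rank-$(\pi(w)-1)$-or-smaller subgroup when $\chi$ is too large, which is exactly where primitivity rank enters. I would likely phrase this via the algebraic/combinatorial model of admissible surfaces from Section \ref{sec:recap} (the "admissible pairs" as quotients of a fixed graph), reducing to a statement about subgroups of $F_r$ containing $w$, and then invoke the same Euler-characteristic bookkeeping that underlies Theorem \ref{thm:EwT} — but applied to each of the two halves separately, which is what yields the factor of $2$.
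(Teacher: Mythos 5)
Your opening geometric move --- gluing the two boundary components of $\S$ along $w$ to form a closed surface --- is exactly the paper's strategy, but the proposal has two genuine gaps. First, your reduction is wrong as stated: it is not true that every admissible pair with $0>\chi(\S)>2(1-\pi(w))$ has vanishing coefficient. For instance, an annulus with a handle attached, mapped to $\Om$ through the circle representing $w$, is an admissible pair for $(w,w^{-1})$ with $\chi=-2$, and your gluing argument cannot rule it out: for such a pair the image subgroup $H$ is contained in $\grp{w}$, has rank $1$, and $w$ is \emph{primitive} in it, so your claimed ``geometry forces $w$ to be non-primitive in $H$'' fails. The paper handles this by first separating off all pairs whose maps factor through $\grp{w}$ --- their aggregate contribution is exactly $\grp{T,1}=1$ --- and then invoking the vanishing criterion of Theorem \ref{thm:needalg} (via Corollary \ref{cor:EwT1}, which ultimately rests on the Magee--Puder matchings criterion) to restrict attention to pairs for which $\grp{f_*\Pi_1(\S,V)}$ is a non-cyclic algebraic extension of $\grp{w}$, hence of rank at least $\pi(w)$. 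Nothing in your proposal plays this role, and without it no purely surface-theoretic argument about individual pairs can succeed.

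Second, the step you yourself flag as the main obstacle --- upgrading $\chi(\S)\le 1-\pi(w)$ to $\chi(\S)\le 2(1-\pi(w))$ --- is resolved in the paper not by treating the two boundary components as ``two independent copies'' (which makes no sense when $\S$ is connected), but by a single classical fact about the closed surface you obtain after gluing: a free quotient of the fundamental group of a closed orientable surface of genus $g$ has rank at most $g$, not $2g$ (Proposition \ref{prop:surfacequo}, due essentially to Stallings; see also Lyndon--Schupp). With $H=f_*\pi_1(\S_0)$ of rank at least $\pi(w)$, this gives $\chi(\S)=\chi(\S_0)=2-2\,\mathrm{genus}(\S_0)\le 2-2\rk H\le 2(1-\pi(w))$ directly; your bound $\rk H\le 1-\chi(\S)$ only uses the first Betti number and is off by exactly the factor of two you are missing. (The disconnected case, where $\S$ splits into two one-boundary pieces, is handled separately and easily via genus $\ge\cl(w)\ge\pi(w)/2$ for each piece.) So the proposal identifies the right first step but leaves both the reduction to algebraic-extension pairs and the crucial rank-versus-genus lemma unsupplied.
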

This is interesting as it is a special case of Conjecture \ref{con:dimchi}, as we explain below.

\subsection{Stable class functions}
One can try and study more general class functions, and not just moments. Clearly, we cannot obtain bounds for general class functions, since we need to incorporate the dependence on $n$. Hence, we develop a notion of a \emph{family} of class functions, or, equivalently, stable class functions. We restrict ourselves to "rational" class functions: the ones that can be represented as rational functions of the eigenvalues.

Let $\A$ be the infinite-dimensional polynomial ring $\CC[\xi_1,\xi_2,\dots,\xi_{-1},\xi_{-2},\dots]$. We think of it as the ring of stable (rational) class functions, as follows. A stable class function should be a symmetric function of the eigenvalues, and indeed $\A$ admits such an interpretation: first, consider the algebra of stable symmetric polynomials. This is defined as $\varprojlim \CC[x_1,...,x_n]^{S_n}$, where $\CC[x_1,...,x_n]^{S_n}$ is the ring of symmetric polynomials (the elements fixed under the action of $S_n$), and the limit is taken over the maps $\CC[x_1,...,x_{n+1}]^{S_{n+1}}\to \CC[x_1,...,x_n]^{S_n}$ given by $x_{n+1}\mapsto 0$. According to \cite[Proposition I.2.12]{macdonald}, we have an isomorphism 
$$
\CC[\xi_1,\xi_2,...] \cong \varprojlim \CC[x_1,...,x_n]^{S_n}.
$$
Now, for stable rational functions, we can write (cf. \cite{Koike})

$$
\A \cong \CC[\xi_1,\xi_2,...] \otimes \CC[\xi_{-1},\xi_{-2},...] \cong \left(\varprojlim \CC[x_1,...,x_n]^{S_n}\right) \otimes \left(\varprojlim \CC[x_1^{-1},\dots, x_n^{-1}]^{S_n}\right).
$$

For any $n$, there is a map $\A \to C(U(n))$, where $C(U(n))$ is the space of class functions, given by $\xi_m \mapsto \tr(A^m)$. In particular, for any $n$ there is an inner product $\grp{,}_n$ on $\A$, obtained from the standard $L^2$ inner product on $C(U(n))$. However, by \cite[Theorem 2]{DiaconisShershahani}, for any two fixed $f, g\in\A$, $\grp{f, g}_n$ stabilizes for large enough $n$. Hence, there is a well-defined stable inner product $\grp{,}=\grp{,}_\infty$ on $\A$.

By the results of \cite{PuderMageeUn}, there is a well-defined function $\EE_w:\A \to \QQ(n)$. The following result is an immediate consequence of Theorem \ref{thm:EwTpi}:

\begin{cor}\label{cor:Ewf}
Let $f \in \A$ be a stable rational class function. Then for any non-power $w\in F_r$,
$$
\EE_w[f] = \grp{f, 1} + \grp{f, \xi_1 + \xi_{-1}} \cdot |\CommCrit(w)| \cdot n^{1 - \pi(w)} + O(n^{-\pi(w)}).
$$
\end{cor}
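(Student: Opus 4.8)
The plan is to deduce this directly from Theorem~\ref{thm:EwTpi} by linearity, exploiting that $\A$ is a polynomial ring. Since $\A = \CC[\xi_1,\xi_2,\dots,\xi_{-1},\xi_{-2},\dots]$, any $f \in \A$ is a \emph{finite} $\CC$-linear combination of monomials, $f = \sum_{j=1}^{N} c_j T_j$, where each $T_j = \xi_{m^{(j)}_1}\cdots\xi_{m^{(j)}_{\ell_j}}$ for some nonzero integers $m^{(j)}_i$ (allowing $\ell_j = 0$, i.e.\ $T_j = 1$). The specialization $\A \to C(U(n))$, $\xi_m \mapsto \tr(A^m)$, is a ring homomorphism and $g \mapsto \int_{U(n)} g\, d\mu_w$ is linear, so $\EE_w \colon \A \to \QQ(n)$ is $\CC$-linear and $\EE_w[f] = \sum_{j=1}^N c_j\, \EE_w[T_j]$.

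Next I would apply Theorem~\ref{thm:EwTpi} to each monomial $T_j$ with $\ell_j \ge 1$; the degenerate case $\ell_j = 0$ is handled by hand via the immediate identity $\EE_w[1] = 1 = \grp{1,1}$, which contributes no $n^{1-\pi(w)}$ term since $\grp{1,\tr} = \grp{1,\overline{\tr}} = 0$. Thus, for $n$ larger than the maximum of the finitely many thresholds,
$$
\EE_w[T_j] = \grp{T_j, 1} + \left(\grp{T_j, \tr} + \grp{T_j, \overline{\tr}}\right)\cdot|\CommCrit(w)|\cdot n^{1-\pi(w)} + O(n^{-\pi(w)}).
$$
Multiplying by $c_j$ and summing the finitely many terms, the error terms merge into a single $O(n^{-\pi(w)})$ with implied constant depending only on the (fixed) $f$. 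For the remaining terms I use linearity of the stable inner product in its first argument together with $\tr = \xi_1$ and $\overline{\tr} = \xi_{-1}$:
$$
\sum_{j} c_j\, \grp{T_j, 1} = \grp{f, 1}, \qquad \sum_{j} c_j\left(\grp{T_j, \tr} + \grp{T_j, \overline{\tr}}\right) = \sum_j c_j\, \grp{T_j,\, \xi_1 + \xi_{-1}} = \grp{f,\, \xi_1 + \xi_{-1}}.
$$
(The middle equality only uses that addition commutes with complex conjugation, so it does not matter whether $\grp{\cdot,\cdot}$ is taken linear or conjugate-linear in its second slot.) Collecting everything yields the asserted expansion.

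I do not expect any genuine obstacle here: all the content sits in Theorem~\ref{thm:EwTpi}, and the corollary is the formal consequence of linearity plus the fact that elements of the polynomial ring $\A$ are finite combinations of exactly the monomials to which that theorem applies. The only things worth a sentence are the bookkeeping of the (fixed) implied constant in the error term under a finite sum, and the trivial treatment of the constant monomial.
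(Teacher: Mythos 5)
Your argument is correct and is exactly what the paper intends: it states Corollary~\ref{cor:Ewf} as an immediate consequence of Theorem~\ref{thm:EwTpi}, the point being precisely the finite monomial decomposition in $\A$, linearity of $\EE_w$ and of the stable inner product, and the merging of finitely many error terms. Your explicit handling of the constant monomial and of the fixed implied constant is fine bookkeeping and matches the intended (unwritten) proof.
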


Arguably, the most important class functions are the irreducible characters (and in fact the irreducible characters of a group $G$ form a basis for the class functions on $G$). This leads us to define the notion of stable families of irreducible characters.

Recall that the irreducible characters of $U(n)$ are in bijection with $n$-tuples of non-increasing integers $(\lambda_1,...,\lambda_n) \in \ZZ^n$ (see \cite{FultonHarris}). It is convenient to denote them by pairs of partitions $(\lambda, \mu)$ with $l(\lambda) + l(\mu) \le n$: if $\lambda = (\lambda_1,\dots, \lambda_s)$ and $\mu = (\mu_1, \dots, \mu_r)$ with $\lambda_i \ge \lambda_{i + 1} > 0$ and $\mu_j \ge \mu_{j + 1} > 0$, we denote by $[\lambda, \mu]_n$ the sequence of length $n$
$$
[\lambda, \mu]_n := (\lambda_1,\dots,\lambda_s, 0, \dots, 0, -\mu_r, \dots, -\mu_1).
$$

For any two fixed partitions $\lambda$, $\mu$, we may consider the sequence of representations corresponding to $[\lambda,\mu]_n$ for large enough $n$ ($n \ge l(\lambda) + l(\mu)$). We call the families of characters we obtain in this way \emph{stable} families of characters of $U(n)$. By \cite[Proposition 2.2]{Koike}, each stable family $([\lambda,\mu]_n)$ defines an element $\chi_{[\lambda,\mu]}$ of $\A$: that is, there is some $\chi_{[\lambda,\mu]} \in \A$ that restricts to the character of $[\lambda,\mu]_n$ on $U(n)$ for large enough $n$. Furthermore, by \cite[Corollary 2.3.2]{Koike}, the $\chi_{[\lambda, \mu]}$ form a linear basis of $\A$. 

\begin{example}
The simplest non-trivial family of characters is defined by $(1,0,...)$ and corresponds to $\xi_1 = \tr$. Another simple family is $(1, 0, ..., 0, -1)$, which corresponds to the irreducible character $\chi(A) = \tr(A)\tr(A^{-1}) - 1$.
\end{example}
\begin{remark}
The algebra $\A$ of stable rational class functions and the notion of stable characters were also discussed in \cite[Section 1.3]{Hanany}.
\end{remark}

Denote by $\widehat{U(\infty)}$ the set of all stable families of irreducible characters, viewed as a subset of $\A$. By orthogonality of irreducible characters, Corollary \ref{cor:Ewf} implies that :
\begin{cor}
If $\chi\in\widehat{U(\infty)}$, $\chi \neq 1, \tr,\overline{\tr}$, and $w\in F_r$ is a non-power, then
$$
\EE_w[\chi] = O(n^{-\pi(w)}).
$$
\end{cor}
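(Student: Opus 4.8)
The plan is to read this off directly from Corollary \ref{cor:Ewf} by substituting $f=\chi$ and observing that the first two terms of the expansion vanish. The only thing requiring a moment's thought is that the stable inner product $\grp{\cdot,\cdot}$ restricts to the usual orthonormality relations on $\widehat{U(\infty)}$.

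First I would record that orthonormality. If $\chi=\chi_{[\lambda,\mu]}$ and $\chi'=\chi_{[\lambda',\mu']}$ correspond to distinct stable families, then for all large $n$ the sequences $[\lambda,\mu]_n$ and $[\lambda',\mu']_n$ are distinct non-increasing integer $n$-tuples, hence $\chi_{[\lambda,\mu]}$ and $\chi_{[\lambda',\mu']}$ restrict to distinct irreducible characters of $U(n)$ and are therefore orthonormal in $L^2(U(n))$; passing to the limit (which exists by the Diaconis--Shershahani stabilization already invoked above) gives $\grp{\chi,\chi'}=0$, and likewise $\grp{\chi,\chi}=1$. Next I would note that $1$, $\tr=\xi_1$ and $\overline{\tr}=\xi_{-1}$ themselves belong to $\widehat{U(\infty)}$: they are the stable families attached to the trivial, standard, and dual-standard representations, i.e.\ to $[\emptyset,\emptyset]$, $[(1),\emptyset]$ and $[\emptyset,(1)]$ respectively.

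Given this, since $\chi\neq 1,\tr,\overline{\tr}$, orthonormality yields $\grp{\chi,1}=0$ and $\grp{\chi,\xi_1+\xi_{-1}}=\grp{\chi,\tr}+\grp{\chi,\overline{\tr}}=0$. Applying Corollary \ref{cor:Ewf} (legitimate since $\chi\in\A$ and $w$ is a non-power) then collapses the expansion to
$$
\EE_w[\chi] = 0 + 0\cdot|\CommCrit(w)|\cdot n^{1-\pi(w)} + O(n^{-\pi(w)}) = O(n^{-\pi(w)}),
$$
which is the claim.

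Since all the quantitative work is contained in Theorem \ref{thm:EwTpi}/Corollary \ref{cor:Ewf}, there is no real obstacle here; the only point deserving care is the identification of the stable inner product with the limit of the $L^2(U(n))$ inner products, the resulting orthonormality of $\widehat{U(\infty)}$, and the verification that $1$, $\tr$ and $\overline{\tr}$ are precisely the stable irreducible characters excluded by the hypothesis.
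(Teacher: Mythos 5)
Your proposal is correct and is essentially the paper's own argument: the corollary is stated there as an immediate consequence of Corollary \ref{cor:Ewf} together with orthogonality of irreducible characters, which is exactly the substitution and vanishing of $\grp{\chi,1}$ and $\grp{\chi,\xi_1+\xi_{-1}}$ that you carry out. Your extra care in checking that the stable inner product inherits orthonormality from $L^2(U(n))$ and that $1,\tr,\overline{\tr}$ are precisely the excluded stable irreducible characters just makes explicit what the paper leaves implicit.
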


The following much stronger bound was conjectured in \cite[Conjecture 1.13]{Hanany} for certain natural families of groups, including unitary and symmetric groups:

\begin{con}\label{con:dimchi}
For any $\chi \in \widehat{U(\infty)}$, and any $w\in F_r$,
$$
\EE_w[\chi] = O\left((\dim\chi)^{1-\pi(w)}\right)
$$
\end{con}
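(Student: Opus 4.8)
The plan is to recast the statement representation-theoretically. Let $\chi$ be the character of the irreducible representation $\rho\colon U(n)\to GL(V_\chi)$. The operator $\Pi_w:=\int_{U(n)^r}\rho\big(w(g_1,\dots,g_r)\big)\,dg_1\cdots dg_r$ is $U(n)$-equivariant, since $w(hg_1h^{-1},\dots,hg_rh^{-1})=h\,w(\vec g)\,h^{-1}$ and Haar measure is invariant; by Schur's lemma $\Pi_w=c_w\,\mathrm{Id}_{V_\chi}$ with $c_w=\EE_w[\chi]/\dim\chi$, and $|c_w|\le 1$ because $\Pi_w$ is an average of unitaries. Thus the conjecture is equivalent to the decay estimate $|c_w|=O\big((\dim\chi)^{-\pi(w)}\big)$, uniformly in $n$. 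The degenerate cases are immediate: $\chi=1$ and $w=1$ are trivial; if $w$ is a proper power then $1-\pi(w)=0$ and $|\EE_w[\chi]|\le\dim\chi$; and if $w$ is primitive then $\EE_w[\chi]=\grp{\chi,1}$ for large $n$ by the remark after Theorem~\ref{thm:EwTpi}, which is $0$ for $\chi\ne 1$ (and $\dim\chi\to\infty$ unless $\chi=1$). So after an automorphism of $F_r$ we may assume $r=\pi(w)\ge 2$, that $w$ is imprimitive in $F_r$, and that all $r$ generators occur in $w$.

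\textbf{Step 2: cancellation in the Magee--Puder surface sum.} Expand $\chi=\chi_{[\lambda,\mu]}$ as a finite $\CC$-combination of power-sum monomials $\xi_{m_1}\cdots\xi_{m_\ell}$ via the Frobenius/Koike character formula, with $\ell\le d:=|\lambda|+|\mu|$; then $\EE_w[\chi]=\sum(\text{coefficient})\cdot\EE_w[\xi_{m_1}\cdots\xi_{m_\ell}]$, and each mixed moment equals $\sum_{(\S,f)\ \mathrm{admissible}}\chi^{(2)}(\MCG(f))\,n^{\chi(\S)}$. Since $\dim\chi$ is a polynomial in $n$ of degree exactly $d$, the goal becomes: after collecting all terms, every power $n^{e}$ with $e>d\,(1-\pi(w))$ occurs with total coefficient $0$ (there are only finitely many such $e$). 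The mechanism I would try to exploit is this: organize admissible pairs by the isomorphism type and rank of the ``core'' of $f$ (a Stallings-type quotient of $\S$, with image subgroup $f_*\pi_1\le F_r$), and use the Louder--Wilton dependence theorem---already used for Theorem~\ref{thm:EwTpi}---to control which ranks can occur at each Euler-characteristic level. The pairs that survive above the threshold should be exactly those factoring through a realization of $w$ as a standard surface word, together with ``capping'' data on the remaining boundaries whose weighted count reproduces the expansion of $\chi$ paired against the characters $1,\xi_1,\xi_{-1}$ that can legitimately decorate a capped boundary; the orthogonality relations that make $\chi$ \emph{irreducible} then force these residual contributions to cancel. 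The case $\chi=\chi_{[1,1]}$ handled in Theorem~\ref{thm:EwX2}---where $d=2$ and only the monomials $\xi_1\xi_{-1}$ and $1$ occur---is the first instance of exactly this. As a consistency check, the classical identity $\int_{U(n)^{2g}}\chi\big(\prod_{i=1}^{g}[a_i,b_i]\big)=(\dim\chi)^{1-2g}$ shows the conjectured bound is sharp precisely when $w$ admits a commutator-critical subgroup, i.e.\ $\pi(w)=2\cl(w)$.

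\textbf{Step 3: the main obstacle.} The real difficulty is uniformity in $\chi$. For a \emph{fixed} $\chi$ the relevant part of the surface sum is finite and the cancellation could in principle be checked, but the threshold $d(1-\pi(w))$ decreases linearly in $d=|\lambda|+|\mu|$ while the number and topological complexity of admissible pairs grows far faster, so no term-by-term analysis can scale. What is needed is a structural identity, valid for all $\lambda,\mu$, expressing the high-Euler-characteristic part of $\EE_w[\chi_{[\lambda,\mu]}]$ in terms of that of the fundamental characters $\chi_{[1,0]},\chi_{[0,1]},\chi_{[1,1]}$ through a branching/Littlewood--Richardson recursion compatible with the surface picture---morally, a multiplicativity property of $w\mapsto\Pi_w$ under tensoring representations---reducing the general case to iterating the $\chi_{[1,0]}$ and $\chi_{[1,1]}$ estimates. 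Matching the plethystic combinatorics of $\A$ with the core-graph/surface combinatorics of Magee--Puder at this level of precision is, I believe, the crux of the problem, and is presumably why only the case $\chi=\chi_{[1,1]}$ is settled in this article.
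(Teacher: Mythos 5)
The statement you have set out to prove is Conjecture \ref{con:dimchi}, which is a conjecture of Hanany and Puder that this paper does not prove: the paper only records special cases (it holds for $\chi=\tr,\overline{\tr}$ by the Magee--Puder bound $\EE_w[\tr]=O(n^{1-2\cl(w)})$, for $\chi=\tr\cdot\overline{\tr}-1$ by Theorem \ref{thm:EwX2}, for $\pi(w)=1$, and for a few families of words), so there is no proof in the paper to match yours against. Your proposal, as you yourself acknowledge in Step~3, does not prove it either. The entire content of the conjecture sits in the cancellation asserted in Step~2: after expanding $\chi_{[\lambda,\mu]}$ in power-sum monomials, one must show that the total coefficient of every power $n^{e}$ with $e>d(1-\pi(w))$, $d=|\lambda|+|\mu|$, vanishes. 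No argument is given for this beyond the heuristic that the surviving admissible pairs ``should be'' those factoring through a surface-word realization of $w$ together with capping data, with irreducibility forcing the rest to cancel. The tools actually available in the paper (Corollary \ref{cor:EwT1}, Proposition \ref{prop:chiS}, the Louder--Wilton dependence theorem) control the expansion only down to order $n^{-\pi(w)}$, i.e.\ one step below the leading correction, whereas for large $d$ the conjecture requires cancellation down to order $n^{d(1-\pi(w))}$, a depth that grows linearly in $d$ while the admissible pairs at each level proliferate; bridging that gap uniformly in $\lambda,\mu$ is precisely the missing idea, and naming the obstacle is not the same as overcoming it. So what you have is a research plan, not a proof.

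Two of the ``immediate'' reductions in Step~1 are also not correct as stated. First, the proper-power case: for $\pi(w)=1$ the conjectured bound is $\EE_w[\chi]=O(1)$, while $|c_w|\le 1$ only gives $|\EE_w[\chi]|\le\dim\chi=\Theta(n^{d})$; one genuinely needs the surface-expansion argument sketched in the paper (no connected admissible component for a nontrivial power of $w$ can be a disk, so every admissible pair has $\chi(\S)\le 0$), not the trivial bound. Second, the reduction ``after an automorphism of $F_r$ we may assume $r=\pi(w)$'' is unjustified: one may assume $w$ is not contained in a proper free factor, but a subgroup realizing $\pi(w)$ is in general not a free factor of $F_r$, and replacing $F_r$ by a non-free-factor subgroup changes the word measure $\mu_{w,U(n)}$; nothing in the paper (or in your outline) licenses equating the ambient rank with $\pi(w)$. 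These are repairable, but the central gap described above is not repaired anywhere in the proposal.
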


The conjecture holds for $\chi=\tr$ and $\chi=\overline{\tr}$ by the bound $\EE_w[\tr] = O(n^{1-2\cl(w)})$ explained earlier (see \cite[Corollary 1.8]{PuderMageeUn}), since $\pi(w) \le 2\cl(w)$ and $\dim \tr = n$. 
The second simplest character is arguably $\chi(A) = \tr(A)\tr(A^{-1}) - 1$, which is of dimension $n^2-1$ (and corresponds to the adjoint representation of $U(n)$ on $\mathfrak{su}(n)$). Theorem \ref{thm:EwX2} implies that the conjecture holds for this character as well, answering a question of Hanany and Puder in \cite[Section 1.3]{Hanany}:

\begin{cor}
For the character $\chi = \tr(A)\tr(A^{-1}) - 1$, for all $w\in F_r$,
$$
\EE_w[\chi] = O((\dim \chi)^{1-\pi(w)}).
$$
\end{cor}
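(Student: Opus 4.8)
The plan is to obtain this as an essentially immediate consequence of Theorem \ref{thm:EwX2}. The first step is to identify $\chi$ as an element of the algebra $\A$ of stable rational class functions: as recorded in the Example above, the stable family of irreducible characters indexed by the pair of partitions $((1),(1))$ — equivalently, by the highest weight $(1,0,\dots,0,-1)$ — is precisely $\chi = \xi_1\xi_{-1} - 1 = T - 1$, where $T = \xi_1\xi_{-1} = \tr(A)\tr(A^{-1})$. Since $\EE_w \colon \A \to \QQ(n)$ is linear and $\EE_w[1] = 1$, this gives $\EE_w[\chi] = \EE_w[T] - 1$ for all sufficiently large $n$. (Here I am also using the fact, quoted in the excerpt, that $\grp{\xi_1\xi_{-1},1} = 1$, which is what makes $\xi_1\xi_{-1} - 1$ the honest irreducible character rather than a shift of it.)

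The second step is simply to invoke Theorem \ref{thm:EwX2}, which asserts $\EE_w[T] = 1 + O\!\left(n^{2(1-\pi(w))}\right)$. Combining with the first step yields $\EE_w[\chi] = O\!\left(n^{2(1-\pi(w))}\right)$, so the entire analytic content of the corollary is already in place.

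The final step is a purely elementary conversion from the parameter $n$ to the parameter $\dim\chi = n^2 - 1$, i.e.\ the verification that $n^{2(1-\pi(w))} = O\!\left((n^2-1)^{1-\pi(w)}\right)$. Writing $n^{2(1-\pi(w))} = (n^2)^{1-\pi(w)}$, the ratio of the two sides is $\left(\tfrac{n^2}{n^2-1}\right)^{1-\pi(w)}$, which tends to $1$ as $n\to\infty$; when $\pi(w)\ge 2$ (so $1-\pi(w)<0$) this ratio is in fact $\le 1$ for $n\ge 2$, so the $\dim\chi$-bound is even weaker than the $n$-bound and the claim is clear, while when $\pi(w)\in\{0,1\}$ the ratio is bounded and the claim again holds. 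The case $\pi(w)=\infty$ ($w$ primitive) is degenerate: Theorem \ref{thm:EwX2} then gives $\EE_w[T]=1$ exactly for large $n$, hence $\EE_w[\chi]=0$, matching the right-hand side, which is read as $0$. In all cases $\EE_w[\chi]=O\!\left((\dim\chi)^{1-\pi(w)}\right)$.

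I do not expect a genuine obstacle here: all the real work is done by Theorem \ref{thm:EwX2}, and the corollary is in effect a bookkeeping statement re-expressing that theorem in the normalization demanded by Conjecture \ref{con:dimchi}. The only point requiring (mild) care is keeping track of the sign of $1-\pi(w)$ when passing from powers of $n$ to powers of $n^2-1$, together with the degenerate primitive case — neither of which is serious.
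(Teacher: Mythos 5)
Your proposal is correct and matches the paper's (implicit) argument: the corollary is indeed just Theorem \ref{thm:EwX2} combined with the identification $\chi = \xi_1\xi_{-1} - 1$ and the observation that $(\dim\chi)^{1-\pi(w)} = (n^2-1)^{1-\pi(w)} = \Theta\bigl(n^{2(1-\pi(w))}\bigr)$. Your extra bookkeeping on the sign of $1-\pi(w)$ and the degenerate cases is fine but not needed beyond what the paper already assumes.
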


\begin{remark}
This specific character is also mentioned in \cite[Open problem 6.2]{PuderMageeUn}, where it is asked when it deviates from $1$.
\end{remark}

\begin{remark}
The following interesting observation was made in \cite{Hanany} regarding \emph{polynomial stable characters}. A polynomial stable character is simply a stable character belonging to $\CC[\xi_1,\xi_2,...]\subseteq \A$, or, equivalently, a character whose highest-weight vector is non-negative (so if $[\lambda, \mu]$ is its associated pair of partitions, $\mu$ is the empty partition). Hanany and Puder show that for any $w\in F_r$, Conjecture \ref{con:dimchi} holds for all polynomial stable characters if and only if
$$
\pi(w) \le 2 \scl(w) + 1,
$$
where $\scl(w) := \lim_{k\to\infty}\frac{\cl(w^k)}{k}$ is the \emph{stable commutator length}, another well-known and important algebraic invariant of words. This algebraic inequality was independently conjectured in \cite[Section 6.3.2]{Heuer}, and was verified for various words by computer experiments (see \cite[Proposition 4.4]{CHscl}). Finally, we mention that the argument of Hanany and Puder does not hold for non-polynomial characters like the one in our paper, making it an interesting case. 
\end{remark}

\begin{remark}
So far we discussed some characters for which the conjecture holds. We can also ask for which words the conjecture is known to be true (for any character). For example, the conjecture holds for all words of primitivity rank 1. Indeed, if $\pi(w) = 1$, $w$ is a proper power and the conjecture follows from the power series expansion of \cite{PuderMageeUn}: there are no contributions of order $\Theta(n)$, since they must correspond to a surface with at least one component of genus $0$ and one boundary component (any other surface with boundary has $\chi\le 0$) - such a surface cannot be admissible since the boundary of this component is contractible and hence cannot map to any non-trivial power of $w$.
For $\pi(w) \ge 2$, some specific cases are known. The case of $w=[x,y]$ is well-known: a result of Frobenius (\cite{Frobenius}) states that $\EE_{[x,y]}(\chi) = \frac{1}{\dim\chi}$ for any irreducible character $\chi$, and $\pi([x,y]) = 2$. From this it can be shown (see \cite{Hanany}) that the conjecture holds as well for all words of the form $w=[x_1,y_1]\cdots [x_g, y_g] z_1^{m_1}\cdots z_k^{m_k}$ where the $x_i, y_i, z_j$ are part of a free basis.
\end{remark}

\subsection{Algebraic interpretation of $|\CommCrit(w)|$}\label{subsec:alg-intro}
Suppose $\pi(w) = 2\cl(w)$ and let $g=\cl(w)$. Then $|\CommCrit(w)|$ is the number of equivalence classes of solutions to 
\begin{equation}\label{eq:com_prod}
    [u_1, v_1]\cdots [u_g, v_g] = w
\end{equation}
where two solutions $(u_1,...,u_g,v_1,...,v_g)$ and $(u'_1,...,u'_g,v'_1,...,v'_g)$ are said to be equivalent if they generate the same subgroup, that is,  $\grp{u_1,...,u_g,v_1,...,v_g} = \grp{u_1',...,u_g',v_1',...,v_g'}$. Indeed, clearly every $H\in \CommCrit(w)$ gives rise to a solution of Equation (\ref{eq:com_prod}), and different groups correspond to different solutions. Conversely, any solution $(u_1,...,u_g,v_1,...,v_g)$ of Equation (\ref{eq:com_prod}) generates some subgroup $H$. This group is of rank $\le 2g$. However, it contains $w$ as a non-primitive element (a product of commutators is never primitive), and hence is of rank at least $\pi(w) = 2g$. Hence it is of rank exactly $2g$. Any set of $k$ generators of a free group of rank $k$ is a free basis (see \cite[Proposition 2.7]{LyndonSchupp}), so $\{u_i, v_i\}$ must be a free basis of $H$, hence $H\in\CommCrit(w)$.

The equivalence relation we introduced on the solutions of Equation (\ref{eq:com_prod}) can also be described in another way, following \cite{PuderMageeUn}. Let $F_{2g}$ be the free group with generators $a_1,...,a_g,b_1,...,b_g$. The subgroup $\Aut(F_{2g})_{[a_1,b_1]\cdots [a_g,b_g]}$ of $\Aut(F_{2g})$ of automorphisms which preserve $[a_1,b_1]\cdots [a_g,b_g]$ naturally acts on solutions to Equation (\ref{eq:com_prod}). Any two solutions in the same orbit generate the same subgroup, so they are equivalent by our definition. Conversely, consider a pair of equivalent solutions. They both generate the same subgroup $H$, which is of rank $2g$. Furthermore, each of the solutions gives a natural isomorphism $H\xrightarrow{\sim} F_{2g}$, with $w$ mapping to $[a_1,b_1]\cdots[a_g, b_g]$. Under the isomorphism corresponding to the first solution, the second solution must map to another basis of $F_{2g}$. This basis can be obtained by applying some automoprhism of $F_{2g}$ on the first solution. Furthermore, this automorphism preserves $w$ in $H$, and hence preserves $[a_1,b_1]\cdots[a_g, b_g]$ in $F_{2g}$. Thus any solution can be obtained from any equivalent solution by the action of some element of $\Aut(F_{2g})_{[a_1,b_1]\cdots [a_g,b_g]}$.

\subsection{Similar phenomena in other families of groups}\label{subsec:othergrps}
It is interesting to note that similar results have been proven for other families of groups besides $U(n)$. Most notably, in \cite{Hanany}, Hanany and Puder show that for stable class functions on $S_n$ (their definition is similar to ours):
$$
\EE_w[f] = \grp{f,1} + \grp{f, \rho}\cdot |\Crit(w)| \cdot n^{1-\pi(w)} + O(n^{-\pi(w)}).
$$
where $\rho = \tr - 1$ is the standard character of $S_n$ and $\Crit(w)$ is the set of all subgroups of $F_r$ of rank $\pi(w)$ containing $w$ as a non-primitive element. Observe that in contrast with our case, $\Crit(w)$ is never empty, so in the case of $S_n$ we know exactly what is the second leading term. From this estimate they deduce that for stable irreducible characters $\chi$ of $S_n$ other than $1$ and $\tr - 1$, 
$$
\EE_w[\chi] = O(n^{-\pi(w)}).
$$

In \cite{yotam}, these results are generalized to the case of wreath products $G\wr S_n$, where $G$ is some finite group (continuing a line of work which began in \cite{MPSurfaceWords}). We state their results using a slightly different notation, to emphasize the similarity with the cases of $U(n)$ and $S_n$. We begin by reviewing some necessary preliminaries (see \cite{yotam} for further details and references). Fix a finite group $G$. For the family of groups $G\wr S_n$, one can define a notion of stable class functions and stable families of irreducible characters, similarly to the cases of $S_n$ and $U(n)$. It turns out that any irreducible character $\phi$ of $G$ gives rise to a stable family of irreducible characters $\chi_\phi$ of degree 1 (so $\dim \chi_\phi = \Theta(n)$), and in fact any stable family of irreducible characters of degree $1$ is obtained in this way. For a non-trivial irreducible character $\phi$ of $G$, we say that a f.g. group $H\le F_r$ is a $\phi$-witness of $w$ if it contains $w$ and if the expected value of $\phi(g(w))$ under a choice of a random map $g:H\to G$ (obtained by choosing uniformly the images of the generators) is non-zero. In other words, we consider the word measure induced on $G$ by $w$, when we think of $w$ as an element of $H$ (instead of an element of $F_r$). For the trivial character $\phi = 1$, we say that any group containing $w$ is a $\phi$-witness (this is different from the definition in \cite{yotam}, see the remark below). The $\phi$-witnesses of $w$ are analogous to groups containing $w$ as a surface word, which we study in the case of $U(n)$: to illustrate why, consider the case where $G = \ZZ/m$ is cyclic and $\phi$ is the character $k\mapsto \exp{2\pi i k/m}$. According to \cite[Example 6.6]{yotam}, a group $H$ is a $\phi$-witness of $w$ in this case if and only if $w$ can be written as a word in the basis of $H$ such that the total exponent of each letter is divisible by $m$ (in the language of \cite{yotam} or \cite{MPSurfaceWords}, we should have that $w\in K_m(H)$, where $K_m(H)=\ker{(H\onto H^{ab}/m)}$). Hence we see that for a given $G$ and $\phi$, the $\phi$-witnesses of $w$ are the groups which contain $w$ as a word with a certain combinatorial structure, similarly to groups containing $w$ as a surface word. Furthermore, similarly to the notion of commutator critical subgroups, let $\Crit^\pi_\phi(w)$ be the set of $\phi$-witnesses of $w$ which are algebraic extensions of $w$ and have rank $\pi(w)$. Note that if $\phi$ is the trivial character, we have that $\Crit^\pi_1(w) = \Crit(w)$ in the sense of \cite{Hanany}. For any irreducible character $\phi$ of $G$, we set 
$$
\crit^\pi_\phi(w) = \sum_{H \in \Crit^\pi_\phi(w)} \EE_{\mu_{w\in H}}[\phi]
$$
where for a subgroup $H$, the measure $\mu_{w\in H}$ is the measure $w$ induces on $G$ if we think of $w$ as a word in $H$, as described above. In particular, $\crit^\pi_1(w) = |\Crit(w)|$. Shomroni shows that for any stable class function $f$,  
$$
\EE_w[f] = \grp{f, 1} + \left(\sum_{\phi \in \hat{G}} \grp{f, \chi_\phi}\cdot \crit_\phi^\pi(w) \right) \cdot n^{1 - \pi(w)} + O(n^{-\pi(w)}).
$$
As any stable family of irreducible characters of degree $1$ comes from some $\phi\in\hat{G}$, it follows that for any non-trivial stable family of irreducible characters $\chi$ of $G\wr S_n$,
$$
\EE_w[\chi] = \begin{cases}
\crit^\pi_\phi(w) \cdot n^{1 - \pi(w)} + O(n^{-\pi(w)}), & \chi = \chi_\phi \\
O(n^{-\pi(w)}), & \deg \chi \ge 2
\end{cases}
$$
Note that this is compatible with Conjecture \ref{con:dimchi}, since if $\chi = \chi_\phi$ then $\dim\chi = \Theta(n)$, as mentioned above.

\begin{remark}
We now point out the differences between our statement and the original statement of \cite{yotam}. It is important to remark that the definition of $\phi$-witnesses coincides with the one we give only for non-trivial irreducible characters. Furthermore, in \cite{yotam}, for any non-trivial $\phi\in\hat{G}$ one considers the minimal possible rank $\pi_\phi(w)$ of a $\phi$-witness of $w$, and the set $\Crit_\phi(w)$ of $\phi$-witnesses of this minimal rank. We remark that such witnesses of minimal rank are necessarily algebraic extensions of $w$. One also defines $\crit_\phi(w)$ as $\sum_{H\in \Crit_\phi(w)} \EE_{\mu_{w\to H}}[\phi]$. Finally, denote by $\chi_1$ the family of characters corresponding to the trivial character $\phi = 1$. Then it is shown that
$$
\EE_w[f] = \grp{f, 1} + \grp{f, \chi_1} \cdot |\Crit{w}| \cdot n^{1 - \pi(w)} + \sum_{\phi\in\hat{G} - \{1\}} \grp{f, \chi_\phi} \cdot \crit_\phi(w) \cdot n^{1 - \pi_\phi(w)} + O(n^{-\pi(w)}).
$$
However, when $\pi_\phi(w) > \pi(w)$, the term corresponding to $\phi$ is subsumed by the $O(n^{-\pi(w)})$ term, and indeed in those cases $\Crit^\pi_\phi{w}$ is empty. It is also worth mentioning that Shomroni obtains a stronger bound on $\EE_w[\chi_\phi]$: it is shown that
$$
\EE_w[\chi_\phi] = \crit_\phi(w) \cdot n^{1 - \pi_\phi(w)} + O(n^{-\pi_\phi(w)}).
$$
\end{remark}

Two another interesting families of groups are the orthogonal groups $O(n)$ and the symplectic groups $Sp(n)$. Those cases were studied by Magee and Puder in \cite{MPOn}. In our notation, they show that for each of these families, each moments $\EE_w[\xi_{m_1}\cdots \xi_{m_\ell}]$ admits a power series expansion in terms of certain admissible surfaces, similarly to the case of $U(n)$. It is interesting to see if one can state and prove a result similar to the results discussed in this section for those families of groups.

Finally, one can also consider the family of groups $GL_N(\mathbb{F}_q)$. This case is studied in \cite{glnfq}, where it is conjectured that similar phenomena hold (see e.g. \cite[Conjecture 1.15]{glnfq}), and partial evidence towards this conjecture are given. It is interesting to note that the moments of word measures in this case are rational functions of $q^N$ (and not of $N$), and that instead of algebraic extensions of $w$, the key objects are certain ideals of the group algebra $\mathbb{F}_q[F_r]$.

\subsection{Structure of the Paper}
In Section \ref{sec:pf} we give an overview of the proof. In Section \ref{sec:recap} we review the results of \cite{PuderMageeUn}. In Section \ref{sec:algext} we study the algebraic properties of admissible pairs. In Section \ref{sec:naive} we prove Theorem \ref{thm:EwT}, and in Section \ref{sec:dep} we prove the stronger Theorem \ref{thm:EwTpi}. The proofs are independent of each other. In Section \ref{sec:pfthm2} we give the proof of Theorem \ref{thm:EwX2}.

\subsection{Acknowledgements}
This paper forms an M.Sc. Thesis written by the author under the supervision of Prof. Doron Puder from Tel Aviv University. This project has received funding from the European Research Council (ERC) under the European Union’s Horizon 2020 research and innovation programme (grant agreement No 850956).

\section{Overview of the proof}\label{sec:pf}
Recall that we are trying to bound $\EE_w[\xi_{m_1}\cdots \xi_{m_\ell}]$. We will use the following power series expansion, obtained by Magee and Puder in \cite{PuderMageeUn}:
\begin{equation}\label{eq:mp}
\EE_w[\xi_{m_1}\cdots \xi_{m_\ell}] = \sum_{(\S, f)\; \text{admissible}} \chi^{(2)}(\MCG(f))\cdot n^{\chi(\S)},
\end{equation}
where an admissible pair $(\S, f)$ consists of a surface $\S$ with $\ell$ boundary components and $f$ is a map $f:\S\to \bigvee^r S^1$ that maps the $i$-th boundary component to the loop representing $w^{m_i}$ (exact definitions and further details are reviewed in Section \ref{sec:recap}).

A naive approach would be to bound $\chi(\S)$ for admissible pairs $(\S, f)$. Unfortunately, this is too naive. To see the kind of problem that might arise, consider the case of $\EE_w[\xi_1\xi_{-1}]$ (that is, the expected value of $\tr(w) \tr(w^{-1})$). An admissible pair in this case is a surface $\S$ with two boundary components, mapping to $w$ and $w^{-1}$ under $f$. The easiest example of such a pair is the annulus $\S = S^1 \times I$, with $f$ being the map $f(x,t) = \gamma(x)$ with $\gamma:S^1 \to \bigvee^r S^1$ representing $w$ (so we first collapse $S^1 \times I$ to $S^1$ and then map it to $\bigvee^r S^1$). We see that one boundary of the annulus maps to $w$ and the other to $w^{-1}$ (the boundary circles map to opposite words in $F_r$ because they have opposite orientations). The Euler characteristic of this surface if $0$, which means that its contribution is $O(1)$ (and in fact it can be shown that it contributes exactly $1$). This shows that $\EE_w[\xi_1\xi_{-1}]$ is $O(1)$. We would like to try and get a bound for the next most dominant term. Here we run into a problem: consider the same annulus as before, but attach a handle to it. This is a new admissible surface, of Euler characteristic $-2$. Hence it contributes $Cn^{-2}$. Hence, at least a priori, we get that the second most dominant term is $O(n^{-2})$, which is quite a bad bound. However, this phenomenon is in some sense independent of $w$: we did not use any property of $w$ in constructing this admissible pair. We can phrase this in another way: the map $f:\S \to \bigvee^r S^1$ factors through the map $\gamma:S^1 \to \bigvee^r S^1$ which represents $w$.

To handle this problem, we distinguish between admissible pairs with $f$ factoring through $\gamma$ and those that cannot be factored. For a pair of the former type, write $f$ as $\S \xrightarrow{g} S^1 \to \bigvee^r S^1$. If we consider the pair $(\S, g)$ instead of $(\S, f)$, we can think of it as an admissible pair for $x^{m_1},\dots, x^{m_\ell}$, where $x\in F_1=\pi_1(S^1)$ is a generator. Hence those type of pairs are incorporated in the term $\EE_x[T] = \EE_{unif}[T] = \grp{T, 1}$. We will show that the contribution of those pairs is exactly $\EE_{unif}[T]$, so it remains to bound $\chi(\S)$ only for "interesting" pairs. In fact, we are going to generalize this idea. Let $\Om = \bigvee^r S^1$. We will classify admissible pairs $f:\S\to\Om$ by the image $f_*\pi_1(\S)\le \pi_1(\Om)$. Geometrically, this corresponds to factoring the map $f:\S\to\Om$ as a surjection $\S\to \Gamma$ to a graph $\Gamma$ followed by an immersion $\Gamma \imto \Om$, such that $\pi_1(\Ga) \cong f_*\pi_1(\S)$ and $\pi_1(\Ga) \to \pi_1(\Om)$ is injective (see \cite{stallings} or \cite{stallings-surv} for the relation between graph immersions and subgroups of free groups). We will show that unless $\Ga\imto \Om$ is "complicated enough", the contribution of the pair $(\S, f)$ to the sum in (\ref{eq:mp}) vanishes. More precisely, we will show that if the image of $\pi_1(\Ga)$ in $F_r = \pi_1(\Om)$ (which is just $f_*\pi_1(\S)$ by construction) is not an \emph{algebraic extension} of $\grp{w}$ (that is, $w$ is not contained in a free factor of $f_*\pi_1(\S)$ - see Definition \ref{def:algext}), the contribution of $(\S, f)$ vanishes. It is important to note that this is not entirely correct as stated --- a point we ignore for the moment, but will elaborate on later. It can then be deduced that $(\S, f)$ can only contribute if either $f_*\pi_1(\S) \le \grp{w}$, corresponding to "trivial" cases like the annulus, or $\rk \pi_1( = \rk f_*\pi_1(\S) \ge \pi(w)$ (since the primitivity rank is also the rank of the smallest non-trivial algebraic extension of $w$, see Section \ref{sec:algext}). As a surface with boundary is homotopy equivalent to a graph, we get that if $\S$ is connected (we remark about the non-connected case later), $\chi(\S) = 1 - \rk \pi_1(\S) \le 1 - \rk f_* \pi_1(\S)$ which is bounded by $1 - \pi(w)$ as desired.

The above argument oversees a couple of technical issues. First, we run into a problem if $\S$ is non-connected. We postpone this issue for now. The second critical issue is that the group $f_*\pi_1(\S)$ is hard to work with, as it does not contain all the information about the pair $(\S, f)$. For instance, as the boundaries all have different base points, it does not even contain the boundary words $w^{m_i}$ - only their conjugates. To overcome this, it is convenient to consider instead the relative fundamental groupoid $\Pi_1(\S, V)$, where $V$ is the collection of marked points on the boundary. This is defined to be the category whose objects are the points in $V$ and whose arrows are the homotopy equivalence classes of paths between them. (It is also related to $\pi_1(\S/V)$.) As we only allow homotopies relative to the boundaries, the image of any path between the marked points is a well-defined word in $F_r$. In fact, this groupoid captures exactly the information we want: by Proposition \ref{prop:fstar}, the map $f$ is determined (up to homotopy) by the induced map $f_*:\Pi_1(\S, V) \to \pi_1(\Om)$.

The exact statement we prove is that $(\S, f)$ has vanishing contribution if $\grp{f_*\Pi_1(\S, V)}$,the group generated by the image $f_*\Pi_1(\S, V)$, is not an algebraic extension of $\grp{w}$ (or more precisely of $\grp{w^d}$ for some specific $d$). However, replacing $\pi_1(\S)$ by $\Pi_1(\S, V)$ introduces a further complication: we cannot relate $\chi(\S)$ to $\Pi_1(\S, V)$ in a simple way. A possible solution is to consider the group $\pi_1(\S/V)$ - the fundamental group of the space obtained from $\S$ by gluing the points of $V$ together. Since $f$ maps all the points of $V$ to the same point in $\Om$, the map $f$ descends to a map $\S/V\to \Om$ and we have that $\grp{f_*\Pi_1(\S, V)} = f_*\pi_1(\S/V)$. We can relate $\pi_1(\S/V)$ to $\pi_1(\S)$ quite easily - as it is obtained by gluing $\ell$ points together, we have that 
$$
\rk \pi_1(\S/ V) = \rk \pi_1(\S) + (\ell - 1).
$$
This allows us to relate the group $\grp{f_*\Pi_1(\S,V)}=f_*\pi_1(\S/V)$ to $\rk\pi_1(\S)$ and hence to $\chi(\S)$. However, this bound is not good enough for us. We improve it by using more information about the structure of our problem.

We will present two ways to improve it. The first, which gives a weaker bound, is to note that even though we have a difference of $(\ell - 1)$ in the rank, we also have $\ell$ boundary components which map into the same cyclic subgroup $\grp{w}$. Thus, the rank of $\grp{f_*\Pi_1(\S, V)}$ should be smaller than we expect: $\rk \grp{f_*\Pi_1(\S, V)} = \rk f_*\pi_1(\S/ V) \le \rk \pi_1(\S/V) - (\ell - 1)$. This cancels out the contribution of gluing the points of $V$ together, giving us a total bound of $\rk \pi_1(\S) \ge \rk \grp{f_*\Pi_1(\S, V)} \ge \pi(w)$. This leads to the bound of Theorem \ref{thm:EwT}.

The second method, which achieves a slightly better result, is to use a theorem by Louder and Wilton (\cite{WiltonLouder}) which allows us to relate $\pi_1(\S)$ to a certain graph which basically represents $\Pi_1(\S, V)$.

We now come back to the connectivity of $\S$. Consider for example the case of $T = \xi_1^2 \xi_{-1}$. We can construct certain admissible pairs by taking an annulus with boundaries $w$, $w^{-1}$ and another component which can be any admissible surface for $w$. As the annulus has $\chi = 0$, we see that such pairs contribute as if we were calculating $\EE_w[\tr]$. It turns out that this is the only type of problem that can occur, and it is incorporated in the terms $\grp{T, \tr}$, $\grp{T,\overline{\tr}}$. We note that this issue only arises when proving Theorem \ref{thm:EwTpi} and not the weaker Theorem \ref{thm:EwT}.

Finally, for the case of $\chi(A)=\tr(A)\tr(A^{-1}) - 1$, we are able to get an even better bound by noting that admissible surfaces in this case have two boundary components, corresponding to $w$ and $w^{-1}$, so we can glue an annulus connecting them. This results in a closed surface with a map to the bouquet, which we can analyze more easily as we do not need to take multiple basepoints.

\section{A review of word measures on unitary groups}\label{sec:recap}
In this section, we briefly review the results of \cite{PuderMageeUn} on word measures on the unitary groups $U(n)$. We advise the curious reader to consult the original paper for further details.

We summarize the main results we need from \cite{PuderMageeUn} in the following theorem. The relevant definitions (of admissible pairs, $L^2$-Euler characteristic $\chi^{(2)}$, and admissible pairs definable by a collection a matchings) will be reviewed after the theorem statement.

\begin{thm}[Magee-Puder]\label{thm:recap}
Let $w_1, ..., w_\ell \in F_r$ be non-trivial words. Denote by $\EE[\tr(w_1)\cdots \tr(w_\ell)]$ the integral 
$$
\int_{U(n)^r} \tr(w_1(g_1,...,g_r))\cdots \tr(w_\ell(g_1,...,g_r)) dg_1 \cdots dg_r.
$$
Then $\EE[\tr(w_1)\cdots \tr(w_\ell)]$ is a rational function of $n$ (for large enough $n$), and admits the following power series expansion (for large enough $n$):
\begin{equation}\label{eq:tr}
\EE[\tr(w_1)\cdots \tr(w_\ell)] = \sum_{(\S, f) \textnormal{ admissible}} \chi^{(2)}(\MCG(f)) n^{\chi(\S)}.
\end{equation}
Furthermore, if an admissible pair $(\S, f)$ cannot be defined by a collection of matchings, its contribution is $0$ (that is, $\chi^{(2)}(\MCG(f)) = 0$).
\end{thm}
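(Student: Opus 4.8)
The plan is to prove this by the Weingarten calculus for $U(n)$: turn the matrix integral into a finite sum over combinatorial gluing data, and then read that sum off a surface. First I would expand each factor $\tr(w_i(g_1,\dots,g_r))$ as a sum over index tuples. Writing $w_i$ as a reduced word in the letters $g_j^{\pm 1}$, and recording $g_j$ by an entry of the matrix $U^{(j)}$ of $g_j$ and $g_j^{-1}$ by an entry of $\overline{U^{(j)}}$ (using $U^{-1}=U^{*}$), the product $\tr(w_1)\cdots\tr(w_\ell)$ becomes a sum of monomials in the matrix entries. Grouping, for each generator $j$, all the $U^{(j)}$-entries and all the $\overline{U^{(j)}}$-entries and integrating out each $U^{(j)}$ via the Weingarten formula
$$\int_{U(n)} U_{i_1 j_1}\cdots U_{i_p j_p}\,\overline{U_{i'_1 j'_1}}\cdots \overline{U_{i'_p j'_p}}\, dU = \sum_{\sigma,\tau\in S_p}\Wg(n,\sigma\tau^{-1})\prod_{a=1}^{p}\delta_{i_a\, i'_{\sigma(a)}}\,\delta_{j_a\, j'_{\tau(a)}},$$
which already forces the numbers of positive and negative occurrences of each generator to agree (otherwise the integral vanishes). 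The outcome is a finite sum indexed by tuples of permutations, with a product of Kronecker deltas and a product of Weingarten functions; summing out the remaining free indices contributes a power of $n$, and since $\Wg(n,\cdot)$ is a rational function of $n$ for $n$ large and the sum is finite, rationality of the moment is immediate.

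Second, I would give this sum its topological meaning. The $\ell$ boundary cycles carrying $w_1,\dots,w_\ell$, each subdivided and oriented according to the letters of the corresponding word, are glued up by the matchings prescribed by the permutations (a positively-labelled $g_j$-piece paired, orientably, with a negatively-labelled one), with the index deltas identifying the resulting corners into vertices. This produces a compact oriented surface $\S$ together with a map $f\colon\S\to\bigvee^r S^1$ sending the $i$-th boundary component to $w_i$ — i.e. an admissible pair — and the pairs obtained in this way are precisely those definable by a collection of matchings. A standard Euler-characteristic bookkeeping of this complex against the free-index count and the leading orders of the Weingarten factors then assigns to each such gluing the weight $n^{\chi(\S)}$.

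Third, to upgrade this to the exact statement I would feed in the full $1/n$-expansion of the Weingarten function (Collins; Collins--\'Sniady): each subleading term is attached to a "pinched" admissible pair of strictly smaller Euler characteristic, so every admissible pair eventually appears in the sum; and after collecting all the gluing data giving rise to a fixed pair $(\S,f)$, up to the equivalence relation on admissible pairs, the accumulated weight is an alternating Euler-characteristic-type count over the combinatorial automorphisms, which Magee--Puder identify with the $L^2$-Euler characteristic $\chi^{(2)}(\MCG(f))$. In particular this count is empty — hence $0$ — for an admissible pair not definable by matchings, giving the last assertion.

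I expect the main obstacle to be exactly this last identification: matching the raw combinatorial weight produced by the Weingarten expansion, summed over all gluing data of a fixed topological type, with the clean invariant $\chi^{(2)}(\MCG(f))$ requires both the detailed structure of the $1/n$-expansion of $\Wg$ and the recognition of the resulting alternating sums as an $L^2$-Euler characteristic. By contrast, the first two steps are careful but essentially routine bookkeeping, the real subtleties being orientations, the choice of basepoints on the boundary components, and the precise equivalence under which admissible pairs are counted.
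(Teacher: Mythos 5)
First, a point of framing: this paper does not prove Theorem \ref{thm:recap} at all --- it is quoted verbatim from Magee--Puder \cite{PuderMageeUn}, and Section \ref{sec:recap} only reviews the statement and the matchings construction. So the comparison is really with the original proof in \cite{PuderMageeUn}, whose broad strategy (expand the traces into matrix entries, integrate each generator out with the Weingarten formula, reassemble the resulting matchings/permutation data into an admissible pair $(\S,f)$, and read the power of $n$ off $\chi(\S)$) is indeed the one you sketch, and is consistent with the construction recalled in Subsection \ref{subsec:colmatch}.

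However, as a proof your proposal has a genuine gap, and you name it yourself: the identification of the total weight accumulated by all gluing data representing a fixed equivalence class of admissible pairs with $\chi^{(2)}(\MCG(f))$ is simply asserted (``which Magee--Puder identify with the $L^2$-Euler characteristic''), which begs the very statement being proved. This identification is the heart of the original argument and is far from bookkeeping: it requires the full asymptotic expansion of $\Wg(n,\cdot)$ organized so that every order in $1/n$ is matched to surface data, a proof that for each fixed power of $n$ only finitely many equivalence classes contribute (the right-hand side of (\ref{eq:tr}) ranges over infinitely many topological types, so one must justify that it is a legitimate asymptotic series for the rational function on the left), and the $L^2$-cohomological input needed to recognize the resulting count --- taken over the generally infinite stabilizer $\MCG(f)$, not over a finite group of ``combinatorial automorphisms'' --- as $\chi^{(2)}(\MCG(f))$. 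Without that step the final assertion (vanishing of the contribution of pairs not definable by matchings) is also not yet available, since in your setup it is only meaningful once the coefficient of every admissible pair, definable by matchings or not, has been identified with $\chi^{(2)}(\MCG(f))$. The first two steps of your outline (Weingarten expansion, rationality, and the surface-from-matchings construction) are correct and match \cite{PuderMageeUn} and Subsection \ref{subsec:colmatch}, but they constitute the routine part of the argument.
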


The sum in (\ref{eq:tr}) is taken over all the equivalence classes of admissible pairs $(\S, f)$ for $w^{m_1}, ..., w^{m_\ell}$. The exact definition is as follows:

\begin{define}
Let $w_1, ..., w_\ell \in F_r$, and let $\Om = \bigvee^r S^1$ be a wedge of circles with a basepoint $o$. Fix an identification $\pi_1(\Om, o) \cong F_r$. An \emph{admissible pair for $(w_1,...,w_\ell)$} is a pair $(\S, f)$, where $\S$ is compact oriented surface $\S$ with $\ell$ boundary components $\partial_1, ..., \partial_\ell$, with at least one boundary component on every connected component, together with a choice of points $v_i \in \partial_i$, and $f$ is a map $f:\S \to \Om$ such that $f(v_i) = o$, and the loop $\partial_i$ (with orientation induced from $\S$), based at $v_i$, is mapped to the class of $w_i$ in $\pi_1(\Om, o)$.
\end{define}

The types of equivalence we allow consist of homeomorphisms of the surface $\S$ and homotopies of the map $f$ relative to the marked points $v_i$. Formally, we define:

\begin{define}
Two admissible pairs $(\S, f)$ and $(\S', f')$ are called \emph{equivalent} if there is an orientation-preserving homeomorphism $F:\S\to \S'$, such that $F(v_i) = v_i'$ (where $v_i$, $v_i'$ are the marked points on the boundaries of $\S$, $\S'$, respectively) and $f'\circ F$ and $f$ are homotopic relative to the points $v_i$.
\end{define}

The coefficients are described in terms of certain stabilizers. Consider the mapping class group $\MCG(\S)$ of $\S$, which is the group of homeomorphisms $\S \to \S$ fixing the boundary pointwise, up to isotopies. It has a natural action on the collection of relative homotopy classes of maps $f:(\S, \{v_i\}) \to (\Om, o)$. We denote the stabilizer of the class of $f$ by $\MCG(f)$.

The $L^2$-Euler characteristic $\chi^{(2)}$ is an analytic analog of the standard Euler characteristic for groups, useful in the case of non-compact classifying spaces. It can take any real value, though it can be shown that it always comes out an integer in our situation. See \cite{Luck2002}, \cite{CheegerGromov} for details on $L^2$-cohomology, and \cite[Section 4]{PuderMageeUn} for the applications to word measures.

It remains to explain what it means for an admissible pair $(\S ,f)$ to be defined by a collection of matchings. We study this in the following subsection.

\subsection{Admissible pairs from collections of matchings}\label{subsec:colmatch}

\begin{figure}[h]
    \centering
    \def\svgwidth{\columnwidth}
    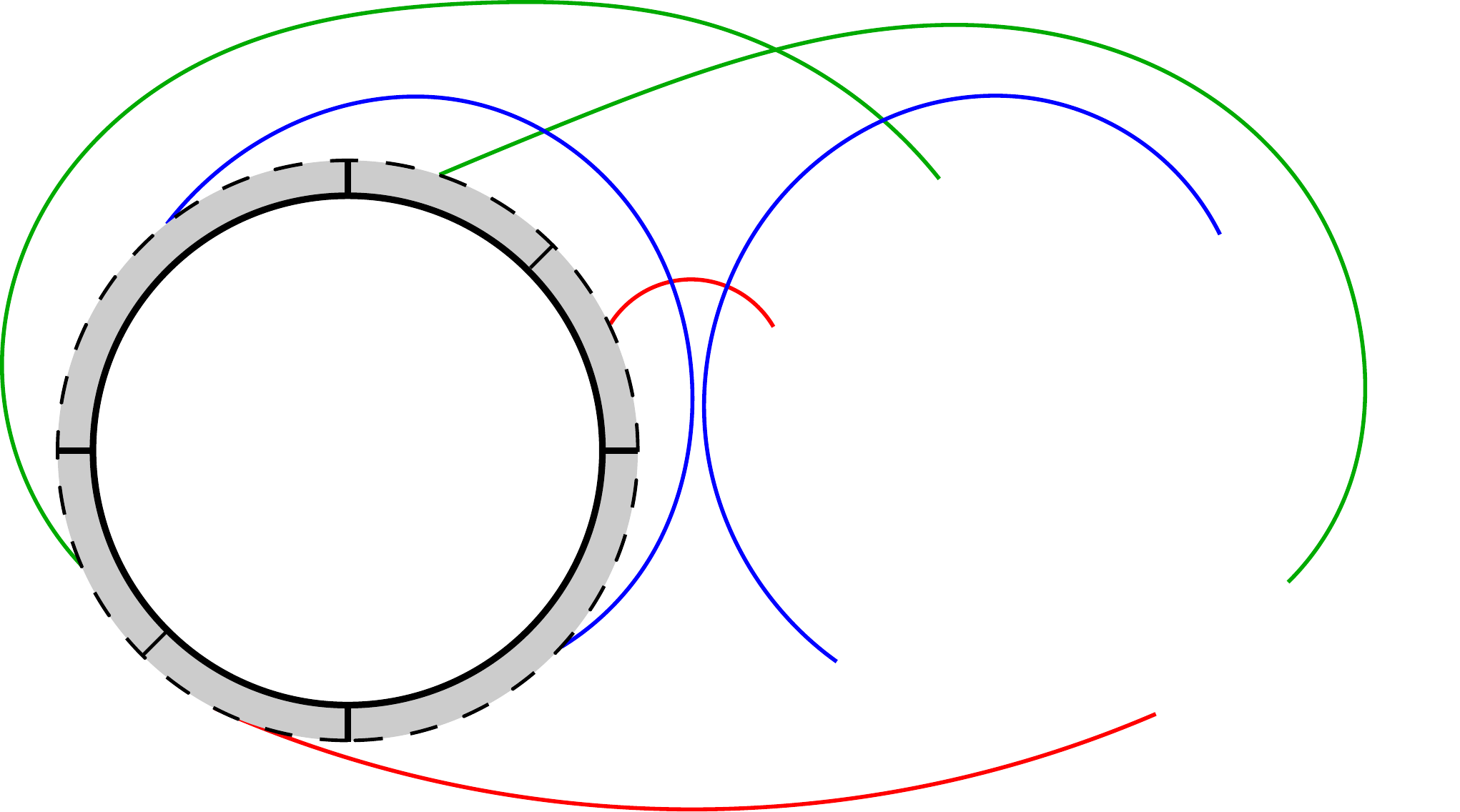
    \caption{An admissible surface for the pair of words $[x,y]$, $[y, x]$. The $x$-segments are divided into two. The annuli should be glued according to the specified matchings.}
    \label{fig:surf-matchings}
\end{figure}

Fix $w_1,...,w_\ell\in F_r$ and fix a basis $x_1,...,x_r$ for $F_r$. Consider the mixed moment
$$\int_{U(n)^r}{\prod_i \tr(w_i(A_1,...,A_r))} \; dA_1 \cdots dA_r.
$$

It is easy to see that this mixed moment vanishes unless the collection $(w_1, \dots, w_\ell)$ is balanced, in the sense that every letter of $F_r$ appears with a total exponent $0$ in all the $w_i$ together (counting $x^{-n}$ as $-n$). Indeed, the map $A_i\mapsto \lambda A_i$ is measure-preserving on $U(n)$ for $|\lambda|=1$. On the other hand, applying it multiplies the integral by $\lambda^{\tau_i}$ where $\tau_i$ is the total exponent of the $i$-th letter. Hence if $\tau_i \neq 0$ the integral must vanish.

Fix a representation for each $w_i$ using the fixed basis $x_1,...,x_r$ (not necessarily reduced). Given a basis letter $x \in \{x_1,...,x_r\}$, we define an $x$-matching of $w_1,...,w_\ell$ to be a perfect matching between the appearances of $x$ and the appearances of $x^{-1}$ in $w_1,...,w_\ell$ (as explained, such a matching must exist for the integral to be non-zero). Given a collection of $x$-matchings for all basis letters (with possibly more than one matching per letter), we will define an admissible pair $(\S, f)$. We say that an admissible pair \textit{can be defined by a collection of matchings} if it is equivalent to an admissible pair obtained by this construction.

We now explain the construction. Let $\sigma_{i,1},\cdots \sigma_{i, k_i}$ be $x_i$-matchings for $w_1,...,w_\ell$ (here $k_i > 0$). We construct an admissible pair $(\S, f)$ as follows. Take $\ell$ annuli, and partition the $m$-th annulus according to the letters of $w_m$. Mark a point on one boundary component of each annulus, at the beginning of the corresponding word $w_i$. We call the boundary component with the marked point the \emph{inner} boundary of the annulus, and the other component the \emph{outer} boundary. Next, for each letter $x_i$, take all the segments corresponding to it (or its inverse) and divide each to $k_i$ segments. The $j$-th segment in each $x_i$ segment is called an $(x_i, j)$-segment. The order is reversed in segments labelled $x_i^{-1}$. Finally, for any $i$ and $j$, glue together $(x_i, j)$-segments on the outer boundaries of all the annuli, according to the matching $\sigma_{i, j}$. This process yields a surface $\S$ with $\ell$ boundary components. See Figure \ref{fig:surf-matchings} for an illustration of the construction. This process also induces a natural map $f:\S \to \Om$: the circles of $\Om$ naturally correspond to $x_1,...,x_r$. Divide the $i$-th circle to $k_i$ segments and label them $(x_i, 1),...,(x_i, k_i)$. On each annulus we can define a map by sending each $(x_i, j)$ segment of the annulus to the corresponding segment of $\Om$. After gluing the annuli together, we obtain a well-defined map $f:\S\to \Om$. Furthermore, $f$ maps each boundary component to the corresponding $w_k$. Hence $(\S, f)$ is an admissible pair.

If $(\S ,f)$ was constructed from a collection of matchings, we can realize the collection of matchings geometrically. Consider the mid-points of the $(x_i, j)$ segments in $\Om$ (that we used for defining $f$). The pre-image $f^{-1}(p)$ of such a point is a collection of arcs (paths between boundary components) on $\S$, which induce the given matching on $(x_i, j)$ segments. Furthermore, if we cut $\S$ along all of those arcs (of all the points), we remain with a collection of disks.

\section{Algebraic properties of admissible pairs}\label{sec:algext}

In this section, we study algebraic properties of admissible pairs $(\S, f)$ for a collection of words $(w_1,\cdots, w_\ell)$. In particular, we show that an admissible pair is determined (up to homotopy) by a certain map of groupoids, and give an algebraic criterion for the vanishing of $\chi^{(2)}(\MCG(f))$. We focus on connected surfaces, but the results can be easily generalized to the non-connected case. We end this section by applying our results to analyze the power series expansion (\ref{eq:tr}).

\subsection{Admissible pairs and fundamental groupoids}

Let $\S$ be a connected surface with boundary, and let $V$ be a choice of one point on each boundary component of $\S$. Let $\Om$ be the rose $\bigvee^r S^1$, and let $o$ be its natural basepoint. We study maps $f:\S \to \Om$ which send the points of $V$ to $o$ (i.e. maps of pairs $(\S, V) \to (\Om, \{o\})$), up to homotopies relative to $V$. We will study what determines the (relative) homotopy class of $f$.

Note that any path between points of $V$ (including loops based at a point of $V$) maps to a closed loop in $\Om$, whose homotopy class in $\pi_1(\Om)$ is unchanged under homotopies of $f$ (relative to $V$). In fact, as we will soon see, the images of these paths determine the homotopy class of $f$.

Formally, we can think of the collection of homotopy classes of paths between points of $V$ as a "relative" fundamental groupoid of $\S$ (which is also called \emph{the fundamental groupoid of $\S$ on $V$}, see \cite{BrownGroupoids}). We denote it by $\Pi_1(\S, V)$. Any homotopy class of maps $f:(\S, V) \to (\Om, o)$ induces a map $f_*:\Pi_1(\S, V) \to \pi_1(\Om, o)$. Our claim is the following:

\begin{prop}\label{prop:fstar}
The homotopy class of $f$ (relative to $V$) is determined by $f_*$.
\end{prop}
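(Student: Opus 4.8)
The plan is to prove this by reducing to the standard fact that maps into a $K(\pi,1)$ are determined up to homotopy by the induced map on fundamental groups (or groupoids), combined with the fact that a surface with boundary deformation retracts onto a graph. First I would fix a spine: since $\S$ is a connected surface with nonempty boundary, it is homotopy equivalent to a finite graph $\Ga \subseteq \S$, and I would choose this graph so that it contains all the points of $V$ and so that the inclusion $(\Ga, V) \into (\S, V)$ is a homotopy equivalence of pairs (one can build $\Ga$ by pushing the boundary circles into the interior and attaching arcs, keeping each $v_i$ as a vertex). Then $\Pi_1(\S, V) \cong \Pi_1(\Ga, V)$ canonically, and a (relative) homotopy class of maps $(\S, V) \to (\Om, o)$ corresponds bijectively to one of maps $(\Ga, V) \to (\Om, o)$. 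So it suffices to prove the statement with $\Ga$ in place of $\S$.

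Next I would invoke that $\Om = \bigvee^r S^1$ is aspherical — its universal cover is a tree, so $\pi_k(\Om) = 0$ for $k \ge 2$ — hence $\Om$ is a $K(F_r, 1)$. For graphs mapping into a $K(\pi,1)$, the homotopy class of a map is determined by the induced map on fundamental groupoids: concretely, pick a spanning tree $\mathcal{T}$ of $\Ga$ and a basepoint $v_1 \in V$; the data of $f_* : \Pi_1(\Ga, V) \to \pi_1(\Om, o)$ is equivalent to specifying, for each point $v_i \in V$, a path-class $p_i$ from $v_1$ to $v_i$ (the image of the unique tree-path), together with the homomorphism $\pi_1(\Ga, v_1) \to \pi_1(\Om, o)$. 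Since $\Ga$ is a wedge (up to homotopy) of circles and of intervals connecting the points of $V$, and $\Om$ is aspherical, any two maps inducing the same such data are homotopic rel $V$: on each edge one has a path in $\Om$ with prescribed endpoints and prescribed homotopy class rel endpoints, and prescribing the class rel endpoints pins down the edge-map up to homotopy rel endpoints; assembling these edge-homotopies (which agree on the $0$-skeleton $V \cup \{\text{vertices of }\Ga\}$ by construction) gives a homotopy rel $V$ of the two maps on all of $\Ga$. Transporting back along the deformation retraction $\S \searrow \Ga$ yields the claim for $\S$.

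The main obstacle — really the only subtle point — is the bookkeeping in choosing the spine $\Ga$ compatibly with $V$ and with the deformation retraction being rel $V$: one must ensure each marked point $v_i$ survives as a vertex of $\Ga$ and that the retraction $\S \to \Ga$ can be taken stationary on $V$, so that "homotopic rel $V$" is preserved in both directions of the correspondence. This is routine surface topology (collar neighborhoods of the boundary), but it is where care is needed; once it is in place, the asphericity argument is standard. I would also remark that the analogous statement holds component-by-component for disconnected $\S$, which is all that is needed later.
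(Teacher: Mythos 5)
Your proof is correct, but it takes a different route from the paper's. The paper keeps the two-dimensional cell in play: it cuts $\S$ along arcs between the points of $V$ into a single polygon $\Delta$, notes that the boundary edges of $\Delta$ are elements of $\Pi_1(\S,V)$, and then uses $\pi_2(\Om)=0$ (two extensions over $\Delta$ glue to a map of $S^2$ into a graph) to see that the extension of a boundary map over the polygon is unique up to homotopy. You instead kill the two-dimensional ambiguity at the outset by deformation retracting $\S$ rel $V$ onto a spine graph $\Ga\supseteq V$, and then argue edge by edge that a map of a graph into the aspherical space $\Om$ is determined rel $V$ by the groupoid data; in fact, once the domain is one-dimensional you do not even need asphericity, only $\pi_1$. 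What each approach buys: the paper's polygon representation requires no choice of spine and is reused verbatim in the proof of Lemma \ref{lem:mapsto}, so the $\pi_2$ trick does double duty; your version is the more standard ``maps into a $K(\pi,1)$'' argument and generalizes painlessly. The cost on your side is exactly the point you flag: you must choose the spine so that it contains all of $V$ and the retraction is stationary on $V$ (e.g.\ take all edges of the paper's polygon except the free boundary edge $u_1$; cutting along this graph leaves a disk that collapses onto the non-free part of its boundary), and you should note that the edge-by-edge step as you phrase it --- ``prescribed homotopy class rel endpoints'' for each edge --- only makes literal sense once every vertex of the spine lies in $V$, as in that model; for a spine with unmarked vertices one would instead insert connecting paths between $f$ and the second map at those vertices before filling in the edge squares. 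With that bookkeeping made explicit, your argument is complete.
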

\begin{proof}
We can cut $\S$ along a collection of paths between points of $V$, distinct except at their endpoints, yielding a polygon $\Delta$ that represents $\S$ (meaning that $\S$ can be obtained from $\Delta$ by gluing its edges). This is illustrated in Figure \ref{fig:surf-gon}. 

\begin{figure}[h]
    \centering
    \def\svgwidth{\columnwidth}
    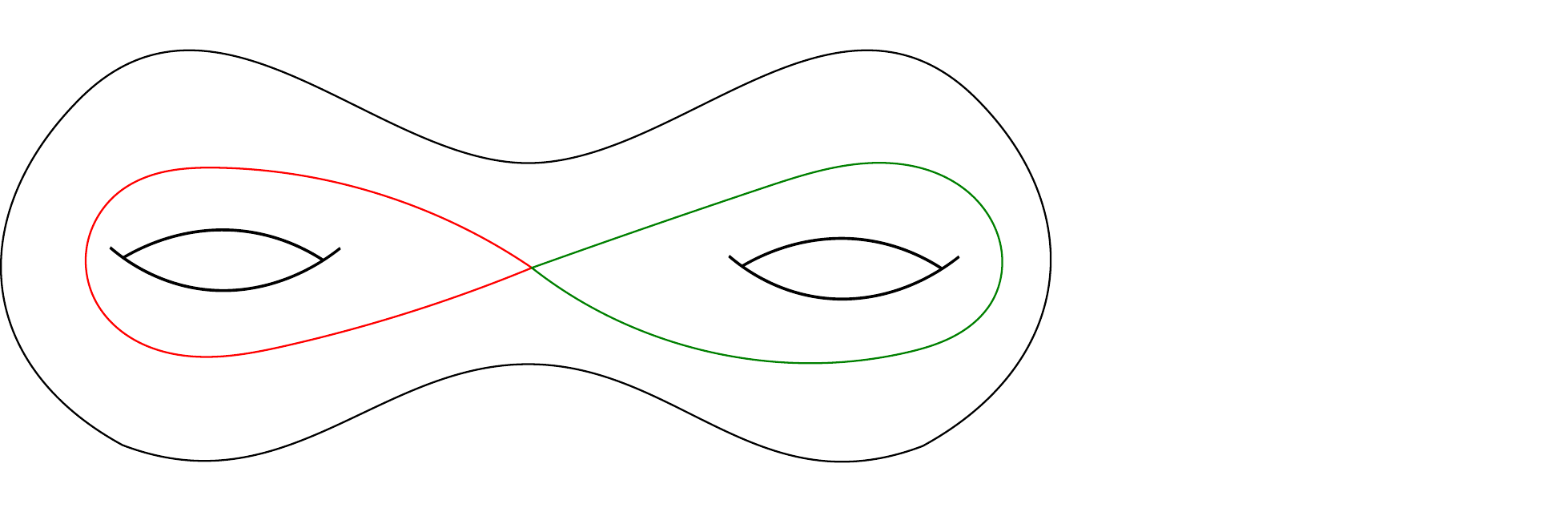
    \caption{Representation of a surface as a polygon. The loops $u_1$ and $u_2$ are the two boundary components}
    \label{fig:surf-gon}
\end{figure}

Any map $f:\S\to\Om$ lifts to a map $\tilde{f}:\Delta\to\Om$. Each boundary edge maps to a loop in $\Om$, and $\partial \Delta$ maps to $1$ in $\pi_1(\Om)$ (since $\Delta$ is contractible). The homotopy classes of the images of the boundary edges depend only on the homotopy class of $f$. Fixing those images, there is a unique way, up to homotopy, to extend the map $\partial\Delta \to \Om$ to the whole of $\Delta$: indeed, note that any two choices of extensions to $\Delta$ yield a map $S^2 \to \Omega$ (where we think of $S^2$ as two copies of $\Delta$ glued to each other). As a graph has no $\pi_2$ such a map must be homotopic to the constant map, showing that the two extensions are homotopic.

Hence, homotopy classes (relative to $V$) of maps $\S \to \Om$ are determined by where they send the edges of $\partial \Delta$, which are elements of $\Pi_1(\S, V)$.
\end{proof}

\begin{remark}
This result holds for non-connected surfaces as well (it is an immediate consequence of the proposition).
\end{remark}

\begin{remark}
The notion of a relative fundamental groupoid $\Pi_1(X, A)$ is related to the notion of a relative fundamental group $\pi_1(X, A) := \pi_1(X/A)$. The map of pairs $f:(\S, V) \to (\Om, \{o\})$ induces maps from both $\Pi_1(\S, V)$ and $\pi_1(\S, V)$ to $F_r \cong \pi_1(\Om, o)$. We have that $\grp{f_*\Pi_1(\S,V)} = f_*\pi_1(\S, V)$, since both groups are generated by the images of paths between points of $V$ under $f_*$.
\end{remark}

\begin{remark}
Using the representation of a surface (with boundary) as a disk with edges glued, one can show that in fact homotopy classes of maps $f:\S\to\Om$ are in correspondence with solutions to 
$$
[x_1, y_1] \cdots [x_g, y_g] u_1 (v_2 u_2 v_2^{-1}) (v_3 u_3 v_3 ^{-1}) \cdots (v_\ell u_\ell v_\ell^{-1}) = 1
$$
in $\pi_1(\Om)$. Indeed, this equation represents the boundary word $\Delta$.
\end{remark}

\subsection{Admissible pairs and algebraic extensions}

\begin{define}\label{def:algext}
Let $J$ be a free group and $H \le J$ be a subgroup. The group $J$ is called an \emph{algebraic extension} of $H$ if $H$ is not contained in any proper free factor of $J$.
\end{define}

Recall that for a free group $F$ and a word $w\in F$, the primitivity rank of $w$, $\pi(w)$, is defined to be the least possible rank of a subgroup $H \le F$ containing $w$ as a non-primitive element. In terms of algebraic extensions, $\pi(w)$ is the minimal rank of a proper algebraic extension of $\grp{w}$. Our main result in this section is the following:

\begin{thm}\label{thm:needalg}
Let $(\S, f)$ be an admissible surface for $(w_1,..., w_\ell)$. Suppose $\S$ is connected. Let $H$ be the group generated by the image of $\Pi_1(\S, V)$ in $F_r$. If $H$ is not an algebraic extension of $\grp{w_1,...,w_\ell}$, then $\chi^{(2)}(\MCG(f)) = 0$.
\end{thm}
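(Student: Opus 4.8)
The plan is to show that when $H$ fails to be an algebraic extension of $K := \grp{w_1,\dots,w_\ell}$, the stabilizer $\MCG(f)$ acquires extra symmetry --- concretely, an infinite-order Dehn twist about a curve that $f$ contracts --- and that this symmetry forces $\chi^{(2)}(\MCG(f)) = 0$ via an $L^2$-vanishing theorem.

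First I would pass to a graph model. Since the marked points make every $w_i$ an honest element of $H = f_*\pi_1(\S/V)$, the induced map $\bar f\colon \S/V \to \Om$ factors as $\S/V \xrightarrow{\bar q} \Ga \xrightarrow{\psi} \Om$ with $\bar q$ surjective on $\pi_1$ (with image $H$) and $\psi$ injective on $\pi_1$; pull $\bar q$ back to $q\colon\S\to\Ga$, and arrange that $V$ maps to the basepoint $p$ of $\Ga$ and that $q(\partial\S)$ lies in the part of $\Ga$ carrying $K$ (possible since each $q(\partial_i)$ has class in $K$). Because $H$ is not an algebraic extension of $K$, the subgroup $K$ sits inside a proper free factor of $H$, so we may take $\Ga = \Ga_0 \vee \Ga_L$ wedged at $p$, with $\pi_1(\Ga_0)\supseteq K$, $\pi_1(\Ga_L) = L \neq 1$ a rose, and $q(\partial\S),q(V)\subseteq\Ga_0$.

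Next I would locate the twisting curve. Fix an edge $\delta$ of $\Ga_L$ with midpoint $m$; after a transverse perturbation, $q^{-1}(m)$ is a closed $1$-submanifold in the interior of $\S$ (disjoint from $\partial\S$, since $q(\partial\S)\subseteq\Ga_0\not\ni m$), hence a disjoint union of embedded circles, each of which $f$ sends to $1\in\pi_1(\Om)$. At least one such circle $c$ is essential in $\S$: otherwise innermost-disk surgeries homotope $q$ --- relative to $\partial\S$ and $V$, hence $\bar q$ relative to $[V]$ --- off $m$, so that $\bar q$ factors through $\Ga\smallsetminus\{m\}\simeq\Ga_0\vee(\text{smaller rose})$, a graph with $\pi_1$ of strictly smaller rank, contradicting $\bar q_*\pi_1(\S/V) = H$. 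Moreover $c$ is not boundary-parallel, because $f$ sends the free homotopy class of $c$ to $1$ whereas it sends that of every boundary curve to the conjugacy class of some $w_i\neq 1$. Now the Dehn twist $T_c$ acts on $\Pi_1(\S,V)$ by an automorphism assembled from partial conjugations by the class $[c]$, so $f_*\circ (T_c)_* = f_*$ (as $f_*[c] = 1$); by Proposition \ref{prop:fstar} we get $f\circ T_c \simeq f$ relative to $V$, i.e. $T_c\in\MCG(f)$, and since $c$ is essential and not boundary-parallel, $T_c$ has infinite order.

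Finally I would deduce the vanishing. The set of isotopy classes of essential simple closed curves mapping to $1$ under $f_*$ depends only on the homotopy class of $f$, so the subgroup $N\le\MCG(f)$ generated by the corresponding Dehn twists is normal in $\MCG(f)$ (conjugating $T_{c'}$ by $\phi\in\MCG(f)$ gives $T_{\phi(c')}$, and $f_*[\phi c'] = f_*\phi_*[c'] = f_*[c'] = 1$) and is infinite. If $N$ --- or a suitable infinite amenable subgroup of it that is still normal in $\MCG(f)$ --- is amenable, then by the Cheeger--Gromov vanishing theorem (\cite{CheegerGromov}) all $L^2$-Betti numbers of $\MCG(f)$ vanish, so $\chi^{(2)}(\MCG(f)) = 0$. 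This last step is the main obstacle: a single infinite-order twist is not enough, and $N$ itself need not be amenable (two curves mapping to $1$ could intersect once, generating a non-amenable subgroup of twists). The fix should come from Theorem \ref{thm:recap}: one may assume $(\S,f)$ is definable by a collection of matchings, since otherwise $\chi^{(2)}(\MCG(f)) = 0$ already, and for such pairs the map $f$ has a rigid tiled normal form in which every essential curve mapping to $1$ is isotopic to a fixed family of pairwise disjoint ``seam'' curves, so the relevant normal subgroup of $\MCG(f)$ is free abelian and hence amenable. Making this precise --- pinning down the correct infinite amenable normal subgroup of $\MCG(f)$ --- is the part that requires genuine work.
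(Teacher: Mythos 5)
Your argument correctly produces an infinite-order element of $\MCG(f)$: the reduction to a graph $\Ga=\Ga_0\vee\Ga_L$ carrying the proper free factor, the innermost-disk argument showing some circle $c$ in $q^{-1}(m)$ is essential, and the verification via Proposition \ref{prop:fstar} that $T_c\in\MCG(f)$ are all sound (modulo the side case of trivial boundary words, where ``not boundary-parallel'' needs care). But the proof has a genuine gap exactly where you flag it: an infinite-order element, or even an infinite normal subgroup, of $\MCG(f)$ does not force $\chi^{(2)}(\MCG(f))=0$ --- free groups are the standard counterexample --- and the Cheeger--Gromov vanishing you invoke needs an infinite \emph{amenable} normal subgroup. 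You have not produced one. The subgroup $N$ generated by all twists about essential $f_*$-trivial curves can contain twists about curves intersecting essentially, hence non-abelian free subgroups; and your proposed repair via the matchings normal form does not close the hole either, because even if the $f_*$-trivial curves in that normal form lie in a single pairwise-disjoint family, the \emph{normal closure} in $\MCG(f)$ of the corresponding twists involves twists about all $\MCG(f)$-translates of those curves, which need not stay disjoint, so that subgroup need not be free abelian. As written, the final step is a conjecture, not a proof.

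The paper's proof sidesteps the group structure of $\MCG(f)$ entirely and is much shorter. It uses the second assertion of Theorem \ref{thm:recap} --- if $(\S,f)$ is not definable by a collection of matchings then $\chi^{(2)}(\MCG(f))=0$ --- together with Lemma \ref{lem:mapsto}, which factors $f$ through a bouquet $\Om'$ with $\pi_1(\Om')=H$ without changing the stabilizer, so one may assume $f_*$ is surjective onto $H$. If $H$ is not an algebraic extension of $\grp{w_1,\dots,w_\ell}$, choose a basis of $H$ containing a letter $u$ independent of the free factor containing the $w_i$; any pair definable by matchings in this basis has $\grp{f_*\Pi_1(\S,V)}$ consisting only of words in the letters actually occurring in the $w_i$, so $u$ is missed, contradicting surjectivity. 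If you want to keep your geometric picture, the efficient route is to aim it at the matchings criterion (show directly that your curve $c$ obstructs definability by matchings in the adapted basis) rather than at an $L^2$-vanishing theorem.
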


The idea behind Theorem \ref{thm:needalg} is that $\chi^{(2)}(\MCG(f))$ can be non-zero only if $(\S, f)$ can be defined by a collection of matchings of $w_1, ..., w_\ell$. In such a case, the image of $f_*$ can only consist of words in the letters appearing in $w_1, ..., w_\ell$. If $\grp{w_1,...,w_\ell}$ is contained in a free factor of $H$, then we could have chosen a basis for $H$ with some letter $u$ not appearing in any $w_i$. Then $u$ cannot be generated by the image of $f_*$, contradicting that $u\in H = \grp{f_*\Pi_1(\S, V)}$. We now explain the details.

\begin{lemma}\label{lem:mapsto}
Let $(\S, f)$ be an admissible surface for $(w_1, ..., w_\ell)$, with $\S$ connected. In particular, $f$ is a map $\S \to \Om$ where $\Om$ is a bouquet with $\pi_1(\Om) \cong F_r$. Let $H = \grp{f_*\Pi_1(\S, V)}$. Since $H$ is a f.g. free group, we can take a bouquet $\Om'$ (with basepoint $o_{\Om'}$) with $\pi_1(\Om') \cong H$. Let $p:\Om' \to \Om$ be the map corresponding to the inclusion $H\into \pi_1(\Om)$. Then there is a map $g:\S \to \Om'$ such that $f:\S \to \Om$ is homotopic to $p\circ g$. Furthermore, the stabilizers $\MCG(f)$ and $\MCG(g)$ are equal (where the latter is defined with respect to the action of $\MCG(\S)$ on maps $(\S, V) \to (\Om', o_{\Om'})$).
\end{lemma}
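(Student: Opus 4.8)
The plan is to reduce everything to the quotient space $\S/V$ and exploit the fact that a wedge of circles is aspherical. First I would fix a CW structure on $\S$ in which each marked point $v_i$ is a $0$-cell; then $\S/V$, obtained by collapsing the finite set $V$ to a single point $*$, is again a CW complex, and the quotient map $q\colon\S\to\S/V$ carries $V$ to $*$. Since $f$ carries every $v_i$ to $o$, it factors as $f=\bar f\circ q$ for a based map $\bar f\colon(\S/V,*)\to(\Om,o)$, and by the earlier remark that $\grp{f_*\Pi_1(\S,V)}=f_*\pi_1(\S,V)$ (with $\pi_1(\S,V)=\pi_1(\S/V)$) we have $\bar f_*\pi_1(\S/V,*)=\grp{f_*\Pi_1(\S,V)}=H$. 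Since $p_*\colon\pi_1(\Om')\to\pi_1(\Om)$ is injective with image $H$, the homomorphism $\bar f_*$ factors as $p_*\circ\phi$ for a unique $\phi\colon\pi_1(\S/V,*)\to\pi_1(\Om')$. As $\Om'$ is a $K(\pi_1(\Om'),1)$, the homomorphism $\phi$ is induced by some based map $\bar g\colon(\S/V,*)\to(\Om',o_{\Om'})$; and as $\Om$ is a $K(\pi_1(\Om),1)$, the two based maps $p\circ\bar g$ and $\bar f$ from $\S/V$ to $\Om$ induce the same homomorphism $p_*\circ\phi=\bar f_*$ on $\pi_1$ and hence are homotopic rel $*$. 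Setting $g:=\bar g\circ q$ and composing that homotopy with $q$ then yields a homotopy from $p\circ g$ to $f$ that is constant on $V$, which proves the first assertion.

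For the equality of stabilizers, I would note that for any wedge of circles $Y$ with basepoint $y_0$ the rel-$V$ homotopy classes of maps $(\S,V)\to(Y,y_0)$ are in bijection with $\Hom(\pi_1(\S/V,*),\pi_1(Y,y_0))$ via $h\mapsto\bar h_*$ (where $h=\bar h\circ q$): indeed every such map, and every rel-$V$ homotopy between two of them, descends to $\S/V$, and $Y$ is aspherical. Under these bijections (taken for $Y=\Om$ and for $Y=\Om'$) the map $[h]\mapsto[p\circ h]$ becomes post-composition $\Hom(\pi_1(\S/V),\pi_1(\Om'))\to\Hom(\pi_1(\S/V),\pi_1(\Om))$ with the injective homomorphism $p_*$, hence is itself injective; it is moreover $\MCG(\S)$-equivariant, since every $\psi\in\MCG(\S)$ fixes $\partial\S$ and hence $V$ pointwise, so $h\circ\psi$ is again a map of pairs and $p\circ(h\circ\psi)=(p\circ h)\circ\psi$. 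Since $[p\circ g]=[f]$ by the first part, we get for $\psi\in\MCG(\S)$ that $[g\circ\psi]=[g]$ iff $[p\circ g\circ\psi]=[p\circ g]$ iff $[f\circ\psi]=[f]$, that is, $\MCG(g)=\MCG(f)$.

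I do not expect a genuine obstacle here: once one commits to working over $\S/V$, the statement is a bookkeeping exercise built on the asphericity of graphs. The point that needs a little care is the interplay between the \emph{marked points} $V$ and the \emph{basepoint} $*$ — one must verify that the homotopies produced over $\S/V$ descend to honest homotopies of maps of pairs $(\S,V)\to(\Om',o_{\Om'})$ that remain constant on all of $V$, which is precisely why it is cleanest to pass to the based quotient $\S/V$ rather than to argue with $f$ on $\S$ directly. One should also record the degenerate case $H=\{1\}$, in which $\Om'$ is a point and every assertion holds trivially.
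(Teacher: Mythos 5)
Your proof is correct, but it takes a genuinely different route from the paper. The paper works with a polygon representation $\Delta$ of $\S$: it lifts $\tilde f|_{\partial\Delta}$ to $\Om'$ edge by edge (the edges map into $H$), extends over $\Delta$, compares $\tilde f$ with $p\circ\tilde g$ by a sphere-filling argument using $\pi_2(\Om)=0$, and proves $\MCG(f)\subseteq\MCG(g)$ by lifting homotopies through $\Om'\to\Om$, again reading off edge words of the polygon. You instead pass to the based quotient $\S/V$ and invoke the standard classification of maps into aspherical spaces: rel-$V$ homotopy classes of maps $(\S,V)\to(Y,y_0)$ correspond to based classes of maps $(\S/V,*)\to(Y,y_0)$, hence to $\Hom(\pi_1(\S/V),\pi_1(Y))$, since a wedge of circles is a $K(\pi,1)$; existence of $g$ then comes from factoring $\bar f_*$ through the injective $p_*$, and the equality of stabilizers becomes a purely formal consequence of injectivity of post-composition with $p_*$ on $\Hom$-sets together with $\MCG(\S)$-equivariance (which uses that mapping classes fix $V$ pointwise). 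Your bijection in fact subsumes Proposition \ref{prop:fstar}, whereas the paper derives that proposition and this lemma by parallel hands-on polygon arguments. What each buys: your version is shorter and makes the stabilizer statement automatic, at the cost of setting up the CW/quotient bookkeeping ($\S/V$ with $*$ a $0$-cell, homotopies descending because $q\times\mathrm{id}_I$ is a quotient map) and citing the $K(\pi,1)$ classification; the paper's version is more self-contained and explicit, and reuses the polygon machinery that it needs elsewhere anyway. Your closing remarks on the rel-$V$ versus based-point care and on the degenerate case $H=\{1\}$ are exactly the right points to flag; no gap remains.
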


This lemma allows us to replace $\Om$ by $\Om'$. By this we mean that the map $g:\S \to \Om'$ defines an admissible pair $(\S, g)$ for $(w_1,\dots, w_\ell)$ where we think of them as words in $H = \pi_1(\Om')$. As the corresponding stabilizers are equal by the lemma, it is enough to study $(\S, g)$ instead of $(\S, f)$. The advantage is that by construction, $H = \pi_1(\Om')$, so $g_*$ is surjective. In other words, we may now assume that $f_*$ is surjective. We can now prove Theorem \ref{thm:needalg}.

\begin{proof}[Proof of Theorem \ref{thm:needalg}, assuming Lemma \ref{lem:mapsto}]
If $H$ is not an algebraic extension of $W = \grp{w_1, ..., w_\ell}$, there is a free factor $J \le H$ containing $W$. Thus, we could have chosen a basis of $H$ containing an element $u$ which is independent of $J$. Using Theorem \ref{thm:recap}, it is enough to show that $(\S, f)$ cannot be defined by a collection of matchings (when the words are written in our new chosen basis). However, if $(\S, f)$ is defined by a collection of matchings it is evident that $\grp{f_*\Pi_1(\S, V)}$ consists only of words involving the letters of $w_1,\dots, w_\ell$. By our choice of basis for $H$, the letter $u$ cannot be contained in $\grp{f_*\Pi_1(\S, V)}$, contradicting surjectivity.
\end{proof}

\begin{proof}[Proof of Lemma \ref{lem:mapsto}]
Let $\Delta$ be a polygon representation of $\S$. The map $f:\S\to \Om$ lifts to a map $\tilde{f}:\Delta \to \Om$ (which is simply the composition of $f$ with the natural projection $\Delta \to \S$). By construction, the edges of $\Delta$ map to elements of $H$, so we can lift the map $\tilde{f}|_{\partial\Delta}:\partial\Delta \to \Om$ to a map $\partial\Delta \to \Om'$ (perhaps after an homotopy of $\tilde{f}$):
$$
\xymatrix{\partial \Delta \ar[d]\ar@{-->}[r] \ar[rd]^{\tilde{f}|_{\partial\Delta}} &\Om' \ar[d]^{p}\\ 
           \Delta \ar[r]^{\tilde{f}} &\Om    }
$$
The map $\tilde{f}$ on the interior of $\Delta$ gives a homotopy showing that the image of $\partial\Delta$ in $\pi_1(\Om)$ is trivial. Since the image of $\partial \Delta$ is also trivial in $\pi_1(\Om')$ (since the edges map to elements of $H$ and their product is still trivial in $H$), there is a well-defined map $\tilde{g}:\Delta \to \Om'$. We claim that the map $p\circ \tilde{g}:\Delta \to \Om$ is homotopic to $\tilde{f}$.
$$
\xymatrix{ & \Om'\ar[d]^{p} \\ \Delta \ar[ru]^{\tilde{g}}\ar[r]^{\tilde{f}}& \Om}
$$
To see why, we consider two copies of $\Delta$, glued together at their boundaries. This forms a sphere $S$, and we can map it to $\Om$ by mapping one copy of $\Delta$ by $\tilde{f}$ and the other by $p\circ \tilde{g}$. Since $\pi_2(\Om)$ is trivial, this map $S\to\Om$ is contractible, and hence can be lifted to a map $D\to \Om$ where $D$ is a ball with boundary $S$. The two copies of $\Delta$ in $S$ are homotopic in $D$, and the homotopy can be chosen to fix their boundaries, and hence the maps $\tilde{f}$ and $p\circ \tilde{g}$ are homotopic and moreover the homotopy between them can be chosen to be constant on $\partial\Delta$. Now, the map $\tilde{g}$ descends to a map $g:\S \to \Om'$ (since $\tilde{f}$ descends to a map $f$ on $\S$ and they agree on $\partial\Delta$), and so $f \sim p \circ g$. Finally, the mapping class group of $\S$ acts on homotopy classes of maps $\S \to \Om'$ (relative to $V$). The stabilizer of $g$ is clearly contained in the stabilizer of $f$. For the other direction, it is enough to note that if two maps $\S\to \Om' \to \Om$ are homotopic as maps to $\Om$, we can lift the homotopy to $\Om'$. This is clear - considering the polygon representation of $\S$, we see that both maps must have the same elements of $F_r$ on the edges, and furthermore those elements are in $H$ (since they factor through $\Om'$). In particular it is clear that the maps are homotopic as maps into $\Om'$.
\end{proof}

In our case, since $w_i = w^{m_i}$, the group $\grp{w_1,...,w_\ell}$ is simply $\grp{w^d}$ for $d=\gcd(m_1,...,m_\ell)$. Hence it is important to understand algebraic extensions of $\grp{w^d}$. 

\begin{lemma}\label{lem:lemma2}
Let $w$ be a non-power and let $H$ be an algebraic extension of $\grp{w^d}$. Then $\grp{H,w}$ is an algebraic extension of $w$, and if $H \neq \grp{w^i}$ (for any $i$), then $\rk(H) \ge \pi(w)$.
\end{lemma}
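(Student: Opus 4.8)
The plan is to establish the two assertions in turn: the first via the Kurosh subgroup theorem, the second via a count of abelian ranks.

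To show that $\grp{H,w}$ is an algebraic extension of $\grp{w}$, I would argue by contradiction. Suppose $\grp{w}$ lies in a proper free factor $K$ of $\grp{H,w}$, so $\grp{H,w}=K\ast L$ with $L\ne 1$. From $\grp{w^d}\le H$ we get $w^d\in H$, and from $\grp{w^d}\le\grp{w}\le K$ we get $w^d\in K$, so $w^d\in H\cap K$. Applying the Kurosh subgroup theorem to $H\le K\ast L$, the intersection $H\cap K$ occurs as one of the free factors in the resulting free-product decomposition of $H$ — namely the factor coming from the trivial double coset $HK$ (a different choice of representative only conjugates it inside $H$, which does not affect that it is a free factor). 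Hence $\grp{w^d}$ is contained in a free factor of $H$; since $H$ is an algebraic extension of $\grp{w^d}$, that free factor must be all of $H$, i.e. $H\le K$. But then $\grp{H,w}\le K$, as $K$ contains both $H$ and $w$, contradicting that $K$ is proper.

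For the rank bound, assume $H\ne\grp{w^i}$ for every $i$. First, $\grp{H,w}$ strictly contains $\grp{w}$: otherwise $H\le\grp{w}\cong\ZZ$ (note $w\ne 1$, since $w$ is a non-power), which would force $H=\grp{w^i}$ for some $i$. Combined with the first part, $\grp{H,w}$ is then a \emph{proper} algebraic extension of $\grp{w}$, so $\rk\grp{H,w}\ge\pi(w)$ by the characterization of $\pi(w)$ as the minimal rank of a proper algebraic extension of $\grp{w}$. It remains to show $\rk\grp{H,w}\le\rk H$. Since $w^d\in H$, the inclusions $H\hookrightarrow F_r$ and $\grp{w}\hookrightarrow F_r$ restrict to the same map on the common subgroup $\grp{w^d}$, hence induce a homomorphism from the amalgam $G:=H\ast_{\grp{w^d}}\grp{w}$ onto $\grp{H,w}$, so $\grp{H,w}$ is a quotient of $G$. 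Abelianizing, $G^{\mathrm{ab}}$ is the quotient of $\ZZ^{\rk H}\oplus\ZZ$ by the single relation equating $d$ times the extra generator with the image of $w^d$, so $G^{\mathrm{ab}}$ has free rank exactly $\rk H$. Hence $\grp{H,w}^{\mathrm{ab}}$, being a quotient of $G^{\mathrm{ab}}$, has free rank at most $\rk H$; and since $\grp{H,w}$ is free, this free rank equals $\rk\grp{H,w}$. Therefore $\rk H\ge\rk\grp{H,w}\ge\pi(w)$.

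The step I expect to be most delicate is the bookkeeping around Kurosh in the first part — one must be careful that $H\cap K$ itself, not merely a conjugate of it, appears as a free factor of $H$. The remaining ingredients are routine: the bound $\rk\ge\pi(w)$ for proper algebraic extensions of $\grp{w}$ is recorded earlier in the text, the amalgam-plus-abelianization estimate is a direct computation, and the degenerate cases (such as $d=1$, i.e. $w\in H$, where $G=H$ and the inequality is trivial) are handled automatically or excluded by the non-power hypothesis.
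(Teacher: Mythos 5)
Your proof is correct, but it reaches the two key facts by a different route than the paper. For the algebraicity of $\grp{H,w}$ over $\grp{w}$, the paper first shows $\rk\grp{H,w}\le\rk H$ (if the rank went up by one, a basis of $H$ together with $w$ would be a generating set of size equal to the rank of $\grp{H,w}$, hence a free basis, contradicting the relation coming from $w^d\in H$), and then invokes the cited lemma that adjoining an element without increasing rank yields an algebraic extension, together with transitivity of algebraic extensions, to pass from $\grp{w^d}$ up to $\grp{H,w}$; you instead get algebraicity directly from the Kurosh subgroup theorem, via the standard fact that $H\cap K$ is a free factor of $H$ whenever $K$ is a free factor of the ambient group (and your worry about conjugates is harmless, since conjugation by an element of $H$ is an automorphism of $H$ and so preserves free factors). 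For the rank inequality $\rk\grp{H,w}\le\rk H$, the paper's "generating set of size equal to the rank is a basis" argument is more elementary, while your amalgam-plus-abelianization count is equally valid and has the virtue of being completely self-contained; conversely, the paper's route leans on the quoted results of Hanany--Puder and Kapovich--Miasnikov but avoids Kurosh. The final step — excluding $\grp{H,w}=\grp{w}$ when $H\neq\grp{w^i}$ and then using that $\pi(w)$ is the minimal rank of a proper algebraic extension of $\grp{w}$ — is the same in both arguments.
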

\begin{proof}
This is proved in the proof of \cite[Lemma 7.5]{Hanany}, but not stated explicitly. We give a full argument for the sake of completeness. Consider $H' = \grp{H, w}$. We claim that $H'$ is an algebraic extension of $w$. This follows from the following lemma (\cite[Lemma 7.4]{Hanany}, \cite[Corollary 3.14]{AlgExts}): Let $J\le F_r$ be a finitely generated subgroup and let $u\in F_r$. If $\rk J \ge \rk\grp{J, u}$, then $\grp{J,u}$ is an algebraic extension of $J$.

To apply this lemma, we need to show that $\rk H' \le \rk H$. Clearly, $\rk H' \le \rk H + 1$. However, we cannot have $\rk H' = \rk H + 1$, since it would imply that any generating set of size $\rk H + 1$ is free (a well-known property of free groups), and we have a generating set of this size which is not free: namely, a generating set for $H$ together with $w$ (which satisfies an equation because $w^d \in H$). Hence the lemma applies, and $\grp{H, w}$ is an algebraic extension of $H$, which is an algebraic extension of $\grp{w^d}$. The relation of algebraic extensions is transitive (\cite[Proposition 3.11]{AlgExts}, \cite[Claim 4.1]{PuderParWordMeasures}), hence we get that $\grp{H, w}$ is an algebraic extension of $\grp{w^d}$, which implies it must be an algebraic extension of $\grp{w}$ as well.

If $H' = \grp{w}$, then $H$ is generated by a power of $w$. Otherwise, $\rk(H') \ge \pi(w)$. On the other hand, $\rk(H) \ge \rk(H')$, as explained before, so $\rk H \ge \pi(w)$. 
\end{proof}

The following observation will be useful for us later:

\begin{claim}\label{clm:Hw}
Let $w$ be a non-power and let $H$ be a f.g. subgroup of $F_r$. If $\grp{H, w}$ is an algebraic extension of $w$ and $H\neq \grp{w^i}$ (for any $i$), then $\rk H \ge \pi(w)$.
\end{claim}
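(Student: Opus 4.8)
The plan is to prove the claim by comparing $\rk H$ with $\rk\grp{H,w}$ directly, relying on the description (recalled just before Theorem~\ref{thm:needalg}) of $\pi(w)$ as the minimal rank of a \emph{proper} algebraic extension of $\grp{w}$. I would begin by noting that $H\not\le\grp{w}$: every subgroup of the infinite cyclic group $\grp{w}$ has the form $\grp{w^i}$, so the hypothesis $H\neq\grp{w^i}$ for all $i$ forbids it. Hence $\grp{w}\lneq\grp{H,w}$, and since $\grp{H,w}$ is by hypothesis an algebraic extension of $\grp{w}$, it is in fact a \emph{proper} algebraic extension of $\grp{w}$; therefore $\rk\grp{H,w}\ge\pi(w)$.

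It then remains to see that $\rk\grp{H,w}\le\rk H$. Since $\grp{H,w}$ is generated by a free basis of $H$ together with $w$, one always has $\rk\grp{H,w}\le\rk H+1$, so I only need to exclude the case $\rk\grp{H,w}=\rk H+1$. If that held, the generating set just described would have cardinality equal to $\rk\grp{H,w}$ and hence would be a free basis of $\grp{H,w}$ (a set of $k$ generators of a free group of rank $k$ is a basis, \cite[Proposition~2.7]{LyndonSchupp}). Then $w$ would be a member of a free basis of $\grp{H,w}$, so $\grp{w}$ would be a free factor of $\grp{H,w}$, and a proper one because $\grp{H,w}\neq\grp{w}$ --- contradicting that $\grp{H,w}$ is an algebraic extension of $\grp{w}$. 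So $\rk\grp{H,w}\le\rk H$, and combined with the previous step this gives $\rk H\ge\pi(w)$.

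The argument is essentially bookkeeping; the one point that requires care is to guarantee that the algebraic extension $\grp{w}\le\grp{H,w}$ is \emph{proper} (so that the minimality defining $\pi(w)$ can be invoked) and, relatedly, to rule out that adjoining $w$ raises the rank of $H$ --- both of which follow at once from $H\not\le\grp{w}$, which is precisely where the hypothesis $H\neq\grp{w^i}$ is used. (One could instead phrase the rank step via the fact, used in the proof of Lemma~\ref{lem:lemma2}, that $\rk H\ge\rk\grp{H,w}$ already forces $\grp{H,w}$ to be an algebraic extension of $H$, but the direct argument above avoids needing it.)
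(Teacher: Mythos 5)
Your proof is correct and takes essentially the same route as the paper's: you rule out $\rk \grp{H,w} = \rk H + 1$ by observing that $w$ would then be primitive in $\grp{H,w}$, contradicting the algebraic-extension hypothesis, and conclude $\rk H = \rk\grp{H,w} \ge \pi(w)$ using that $\grp{H,w}$ is a proper algebraic extension of $\grp{w}$. The only difference is cosmetic: you spell out explicitly that $H \neq \grp{w^i}$ forces $\grp{H,w} \neq \grp{w}$, a point the paper leaves implicit.
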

\begin{proof}
Let $H' = \grp{H,w}$. We either have $\rk H' = \rk H$ or $\rk H' = \rk H + 1$. The latter is impossible: if $\rk H' = \rk H + 1$, then any basis of $H$ becomes a basis of $H'$ when adding $w$ (since it is a generating set of size $\rk H'$), and hence $w$ is primitive in $H'$. Hence we must have $\rk H = \rk H'$, but $\rk H' \ge \pi(w)$.
\end{proof}

\subsection{A certain family of admissible surfaces}
Recall the notation of Theorem \ref{thm:EwTpi}. We now use Theorem \ref{thm:needalg} to show that in computing $\EE_w[T]$, we can restrict to a certain class of admissible surfaces. We then use this to explain how we shall prove the main theorems of the paper.

By Theorem \ref{thm:recap},
$$
\EE_w[T](n) = \sum_{(\S, f) \textnormal{ admissible}} \chi^{(2)}(\MCG(f)) n^{\chi(\S)}.
$$
Let $(\S, f)$ be an admissible pair for $(w^{m_1}, ..., w^{m_\ell})$. Let $\S_1,...,\S_k$ be the connected components of $\S$, and let $f_i=f|_{\S_i}$ and $V_i=V\cap \S_i$ (the marked points). It is clear that $\MCG(\S) = \Pi_i \MCG(\S_i)$, and so $\MCG(f) = \Pi_i \MCG(f_i)$. Hence, by \cite[Formula 7.14]{Luck2002}, 
$$
\chi^{(2)}(\MCG(f)) = \prod_i \chi^{(2)}(\MCG(f_i)).
$$
In particular, for the pair $(\S, f)$ to give a nonzero contribution, we must have $\chi^{(2)}(\MCG(f_i))\neq 0$ for all $i$. However, note that each $(\S_i, f_i)$ is an admissible pair for some collection of powers of $w$, and so by Theorem \ref{thm:needalg} we must have that that $H_i:=\grp{{f_i}_*\Pi_1(\S_i, V_i)}$ is an algebraic extension of some power of $w$ (which is the $\gcd$ of the exponents of the boundary words). By Lemma \ref{lem:lemma2}, we must have that $\grp{H_i, w}$ is an algebraic extension of $w$, and we either have $H_i \le \grp{w}$ or $\rk H_i \ge \pi(w)$.

Consider an admissible pair $(\S, f)$ such that for any component $\S_i$ of $\S$, $\grp{f_*\Pi_1(\S_i, V_i)} \le \grp{w}$. By Lemma \ref{lem:mapsto} (and its proof), the map $f_i:\S_i \to \Om$ factors through a map $g:\S \to \Om'$ where $\Om'=S^1$ is simply a circle and $\Om'\to \Om$ is the map representing $w\in\pi_1(\Om)$. Since this happens for each component of $\S$, we have that the map $f:\S \to \Om$ itself factors through the map $S^1 \to \Om$. Conversely, if $f$ factors through $S^1 \to \Om$, then clearly $\grp{f_*\Pi_1(\S_i, V_i)}\le \grp{w}$. Moreover, we can extend any map $g:\S\to S^1$ such that the boundaries cover the circle with degrees $m_1,..,m_\ell$ to a map $f:\S \to S^1 \to \Om$ which will give an admissible pair for $(w^{m_1},...,w^{m_\ell})$. Hence we see that there is a bijection between admissible pairs for $(x^{m_1},...,x^{m_\ell})$ (where $x\in F_1$ is a generator) to admissible pairs $(\S, f)$ for $(w^{m_1},...,w^{m_\ell})$ such that for any component $\S_i$ of $\S$, $\grp{f_*\Pi_1(\S_i, V_i)} \le \grp{w}$, and furthermore by Lemma \ref{lem:mapsto} their contributions are the same. But the total contribution of admissible pairs for $(x^{m_1},...,x^{m_\ell})$ is just $\EE_x[T] = \grp{T, 1}$. We arrive to the following corollary:
\begin{cor}\label{cor:EwT1}
Let $w\in F_r$ be a non-power, let $m_i\in\ZZ$ be non-zero integers, let $T = \xi_{m_1}\cdots \xi_{m_\ell}$, and let $d = \gcd(m_1,...,m_\ell)$. Then
$$
\EE_w[T] = \grp{T, 1} + \sum_{(\S, f)\in \mathcal{S}(w^{m_1},...,w^{m_\ell})} \chi^{(2)}(\MCG(f)) n^{\chi(\S)}
$$
where $\mathcal{S}(w^{m_1},...,w^{m_\ell})$ is the set of all equivalence classes of admissible surface $(\S, f)$ such that for all connected components $\S_i$ of $\S$, the group $H_i = \grp{f_*\Pi_1(\S_i, V\cap S_i)}$ is an algebraic extension of $w^k$ (where $k$ is the $\gcd$ of the $m_j$-s corresponding to the circles $\partial\S_i$), and for at least one $\S_i$ the group $H_i$ is non-cyclic.

Furthermore, for any $(\S, f) \in \mathcal{S}(w^{m_1},...,w^{m_\ell})$ there is at least one component $\S_i$ of $\S$ such that $\rk \grp{f_*\Pi_1(\S_i, V\cap \S_i)} \ge \pi(w)$.
\end{cor}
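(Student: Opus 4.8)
The plan is to refine the Magee--Puder expansion of Theorem~\ref{thm:recap} by sorting admissible pairs according to the algebraic type of their connected components. First I would fix an admissible pair $(\S, f)$ for $(w^{m_1},\dots,w^{m_\ell})$, write $\S = \S_1 \sqcup \cdots \sqcup \S_k$ for its connected components, and set $f_i = f|_{\S_i}$, $V_i = V \cap \S_i$, and $H_i = \grp{{f_i}_*\Pi_1(\S_i, V_i)}$. Since $\MCG(\S)$ splits as the direct product $\prod_i \MCG(\S_i)$, so does $\MCG(f) = \prod_i \MCG(f_i)$, and multiplicativity of the $L^2$--Euler characteristic under direct products (\cite[Formula 7.14]{Luck2002}) gives $\chi^{(2)}(\MCG(f)) = \prod_i \chi^{(2)}(\MCG(f_i))$; hence a pair contributes to the sum only when \emph{every} component does. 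Each $(\S_i, f_i)$ is an admissible pair for the sub-collection of powers $w^{m_j}$ whose boundary circles lie on $\S_i$, and the subgroup these powers generate is $\grp{w^{k_i}}$ with $k_i$ the gcd of the relevant exponents; so by Theorem~\ref{thm:needalg} a nonzero $\chi^{(2)}(\MCG(f_i))$ forces $H_i$ to be an algebraic extension of $\grp{w^{k_i}}$, and then Lemma~\ref{lem:lemma2} leaves exactly two possibilities for such an $H_i$: either $H_i \le \grp{w}$ (equivalently $H_i$ cyclic, a cyclic algebraic extension of a power of the non-power $w$ necessarily being a power of $w$), or $\rk H_i \ge \pi(w)$.

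Next I would split the sum (over the terms not yet seen to vanish) into the pairs all of whose components satisfy $H_i \le \grp{w}$ and the pairs with at least one non-cyclic $H_i$; the latter set is by definition $\mathcal{S}(w^{m_1},\dots,w^{m_\ell})$, so it only remains to show the first group contributes exactly $\grp{T,1}$. To do this I would build a weight-preserving bijection between the ``all-cyclic'' pairs for $(w^{m_1},\dots,w^{m_\ell})$ and the admissible pairs for $(x^{m_1},\dots,x^{m_\ell})$, where $x$ generates $F_1 = \pi_1(S^1)$: in one direction, Lemma~\ref{lem:mapsto} applied to each component (with $\Om' = S^1$ and $\Om' \to \Om$ the loop representing $w$) factors $f$ through $S^1$; in the other, any map $\S \to S^1$ whose boundary circles wrap with degrees $m_1,\dots,m_\ell$, post-composed with $S^1 \to \Om$, yields an admissible pair for $(w^{m_1},\dots,w^{m_\ell})$ each of whose $H_i$ lies in $\grp{w}$. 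The equality $\MCG(f) = \MCG(g)$ from Lemma~\ref{lem:mapsto} shows corresponding pairs carry the same $\chi^{(2)}$-weight, so the total contribution of the first group equals $\EE_x[T]$, which is $\EE_{unif}[T] = \grp{T, 1}$ for $n$ large because $x$ is primitive. This gives the displayed identity, and the ``furthermore'' clause follows at once: a pair in $\mathcal{S}(w^{m_1},\dots,w^{m_\ell})$ has a component with $H_i$ non-cyclic, and since that component contributes, i.e. $\chi^{(2)}(\MCG(f_i)) \neq 0$, the dichotomy above forces $\rk H_i \ge \pi(w)$.

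The hard part will be turning the correspondence between ``all-cyclic'' pairs and admissible pairs over $S^1$ into a genuine bijection on equivalence classes with matching weights: one has to check that the factorization through $S^1$ furnished by Lemma~\ref{lem:mapsto} is well defined up to the equivalence of admissible pairs, that post-composing with $S^1 \to \Om$ is a two-sided inverse of it, and that both operations are compatible with the $\MCG(\S)$-stabilizers, so that there is no over- or under-counting. Everything else in the argument is a direct invocation of Theorems~\ref{thm:recap} and~\ref{thm:needalg} together with Lemmas~\ref{lem:mapsto} and~\ref{lem:lemma2}.
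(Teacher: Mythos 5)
Your proposal is correct and follows essentially the same route as the paper: decompose into connected components, use $\MCG(f)=\prod_i\MCG(f_i)$ and multiplicativity of $\chi^{(2)}$, invoke Theorem~\ref{thm:needalg} and the dichotomy from Lemma~\ref{lem:lemma2}, and identify the total contribution of the ``all-cyclic'' pairs with $\EE_x[T]=\grp{T,1}$ via the factorization through $S^1$ provided by Lemma~\ref{lem:mapsto} (including the matching of stabilizers). One tiny remark: for the ``furthermore'' clause you do not need to argue that the non-cyclic component has $\chi^{(2)}(\MCG(f_i))\neq 0$ --- membership in $\mathcal{S}$ already stipulates that $H_i$ is an algebraic extension of the relevant power of $w$, so Lemma~\ref{lem:lemma2} applies directly.
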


Recall that we want to prove Theorems \ref{thm:EwT} and \ref{thm:EwTpi}: the weaker theorem states that 
$$
\EE_w[T] = \grp{T, 1} + O(n^{1-\pi(w)})
$$
and the stronger one states that
$$
\EE_w[T] = \grp{T,1} + (\grp{T,\tr}+\grp{T,\overline{\tr}})\cdot C(w) \cdot n^{1-2\cl{w}} + O(n^{-\pi(w)}).
$$
In both cases, we aim to bound $\EE_w[T] - \grp{T, 1}$, and so Corollary \ref{cor:EwT1} plays a key role: it allows us to restrict our attention to admissible pairs $(\S, f) \in \mathcal{S}(w^{m_1},...,w^{m_\ell})$. For example, consider such a pair $(\S, f)$ and let $\S_1,...,\S_k$ be the connected components of $\S$. Note that $\chi(\S) = \sum \chi(\S_i)$ and that each $\S_i$ is a surface with boundary which is not a disk, so $\chi(\S_i) \le 0$. Hence if we can prove that for at least one of the components we have $\chi(\S_i) \le 1 - \pi(w)$, we would prove Theorem \ref{thm:EwT}. To this end, note that as a connected surface with boundary, each $\S_i$ is homotopy equivalent to a graph, and so $\chi(\S_i) = 1 - \rk \pi_1(\S_i)$. Hence it is enough to prove that for at least one component $\S_i$, $\rk \pi_1(\S_i) \ge \pi(w)$. The natural candidate is of course the component for which $\grp{f_*\Pi_1(\S_i, V_i)}$ is a non-cyclic algebraic extension of some power of $w^k$. This is exactly what is proved in Proposition \ref{prop:Scon}.

If we try to improve this bound, the following problem arises: in the cases where $\pi(w)=2\cl(w)$, there may be a connected admissible surface for $w$ of genus $\cl(w)$, so its $\chi$ is $1-2\cl(w) = 1-\pi(w)$. Furthermore, we can take a disjoint union of it with surfaces of $\chi=0$ (which must be annuli), giving us possibly nontrivial contributions for the $n^{1-\pi(w)}$ term. We prove that these are the only possible contributions for the coefficient of $n^{1-\pi(w)}$, which gives us Theorem \ref{thm:EwTpi} (after some analysis of this coefficient).

\section{The naive method}\label{sec:naive}

In this section, we prove Theorem \ref{thm:EwT}. We emphasize that the proof is independent the proof of Theorem \ref{thm:EwTpi}, and so the interested reader may skip directly to Section \ref{sec:dep}. Our proof of Theorem \ref{thm:EwT} is based on the following proposition:
\begin{prop}\label{prop:Scon}
Let $(\S, f)$ be an admissible pair for $w^{m_1},\dots, w^{m_k}$ for some $m_1,...,m_\ell \in \ZZ$, not all zero. Let $d = \gcd(m_1,...,m_k)$. Suppose that $\S$ is connected and that $\grp{f_*\Pi_1(\S, V)}$ is a non-cyclic algebraic extension of $w^d$. Then $\rk \pi_1(\S) \ge \pi(w)$.
\end{prop}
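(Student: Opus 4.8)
The plan is to reduce everything to the single inequality $\rk\pi_1(\S)\ge \rk H$, where $H:=\grp{f_*\Pi_1(\S,V)}=f_*\pi_1(\S/V)$. Once this is in hand, Lemma~\ref{lem:lemma2} finishes the job: $H$ is a non-cyclic (hence $\ne\grp{w^i}$) algebraic extension of $\grp{w^d}$, and $w$ may be assumed a non-power (if $w$ is a proper power then $\pi(w)\le 1\le\rk H$ since $H$ is non-cyclic, and if $w=1$ the hypotheses are vacuous), so $\rk H\ge\pi(w)$. Since $\rk\pi_1(\S/V)=\rk\pi_1(\S)+(\ell-1)$, the whole point is to show that $H$ is \emph{genuinely} $\ell-1$ smaller than it naively looks; the trivial bound only gives $\rk H\le\rk\pi_1(\S)+(\ell-1)$.

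I would start from the polygon presentation of $\S/V$ recorded in the remark after Proposition~\ref{prop:fstar}: writing $g$ for the genus, $\pi_1(\S/V)$ is presented on generators $x_1,y_1,\dots,x_g,y_g,u_1,\dots,u_\ell,v_2,\dots,v_\ell$ with the single relation $[x_1,y_1]\cdots[x_g,y_g]\,u_1(v_2u_2v_2^{-1})\cdots(v_\ell u_\ell v_\ell^{-1})=1$, and, because $(\S,f)$ is admissible for $(w^{m_1},\dots,w^{m_\ell})$, the map $f_*$ sends $u_i\mapsto w^{m_i}$, $x_j\mapsto\beta_j$, $y_j\mapsto\gamma_j$, $v_i\mapsto q_i$ for suitable elements of $F_r$. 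Thus $H$ is generated by $\{\beta_j,\gamma_j,w^{m_i},q_i\}$; since every $q_i\in H$ and every $q_iw^{m_i}q_i^{-1}\in H$, all the $w^{m_i}$ lie in $H$, and as $\gcd(m_i)=d$ they generate $\grp{w^d}$. Hence $H$ is already generated by $G_0:=\{\beta_1,\gamma_1,\dots,\beta_g,\gamma_g\}\cup\{q_2,\dots,q_\ell\}\cup\{w^d\}$, a set of size $2g+\ell=\rk\pi_1(\S)+1$.

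The heart of the matter — and the step I expect to need the most care — is showing that $G_0$ is \emph{not} a free basis of $H$, which then forces $\rk H\le|G_0|-1=\rk\pi_1(\S)$ by the standard fact that a generating set of a free group of size equal to its rank is a basis (\cite[Proposition 2.7]{LyndonSchupp}). Substituting $w^{m_i}=(w^d)^{m_i/d}$ into the defining relation rewrites it as a relator among the symbols of $G_0$: with formal letters $\tilde\beta_j,\tilde\gamma_j,\tilde q_i,z$ this is $W=[\tilde\beta_1,\tilde\gamma_1]\cdots[\tilde\beta_g,\tilde\gamma_g]\,z^{a_1}\tilde q_2 z^{a_2}\tilde q_2^{-1}\cdots\tilde q_\ell z^{a_\ell}\tilde q_\ell^{-1}$ with $a_i=m_i/d$, and it maps to $1$ in $H$. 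To see $W\ne1$ in the free group on those symbols, split it as $W=C\cdot D$ with $C=[\tilde\beta_1,\tilde\gamma_1]\cdots[\tilde\beta_g,\tilde\gamma_g]$ in the free factor on the $\tilde\beta_j,\tilde\gamma_j$ and $D=z^{a_1}\tilde q_2 z^{a_2}\tilde q_2^{-1}\cdots\tilde q_\ell z^{a_\ell}\tilde q_\ell^{-1}$ in the free factor on $z,\tilde q_i$. Then $C$ is the genus-$g$ surface relator, nontrivial precisely when $g\ge1$; and $D$ is always nontrivial, because $\gcd(a_i)=1$ forces some $a_i\ne0$, and after deleting the blocks with $a_i=0$ one is left (writing $\tilde q_1=1$) with a concatenation of blocks $\tilde q_i z^{a_i}\tilde q_i^{-1}$ that is visibly reduced and nonempty, since consecutive surviving blocks never cancel at their junctures. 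As elements of complementary free factors never cancel, $W=CD\ne1$. The degenerate cases ($g=0$, or $\ell=1$, i.e., $\S$ a disk) cause no trouble, since in those cases $H$ would be cyclic and the hypothesis already fails.

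Putting the pieces together, $\rk\pi_1(\S)\ge\rk H\ge\pi(w)$, as claimed. Apart from the nontriviality of $W$, the only remaining items are routine: the identification of $\pi_1(\S/V)$ with the stated polygon presentation (exactly the remark after Proposition~\ref{prop:fstar}) and the reduction to the case that $w$ is a non-power.
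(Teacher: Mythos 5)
Your proof is correct and takes essentially the same route as the paper's: pass to $\pi_1(\S/V)$, use the one-relator (polygon/van Kampen) presentation together with the fact that the boundary loops map into $\grp{w^d}$ to see that $H=\grp{f_*\Pi_1(\S,V)}$ is generated by $2g+\ell$ elements satisfying a relation, conclude $\rk H\le 2g+\ell-1=\rk\pi_1(\S)$, and finish with Lemma \ref{lem:lemma2}. The only differences are cosmetic: you make explicit the formal nontriviality of the relator $W$ (which the paper asserts tersely) and the reduction to $w$ a non-power.
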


As outlined in the end of the previous subsection, Theorem \ref{thm:EwT} follows:
\begin{proof}[Proof of Theorem \ref{thm:EwT}]
By Corollary \ref{cor:EwT1}, it is enough to show that $\chi(\S) \le 1 - \pi(w)$ for any admissible pair $(\S, f)\in\mathcal{S}(w^{m_1},...,w^{m_\ell})$. Fix such a pair $(\S, f)$ and let $\S_1,...,\S_k$ be its connected components. Let $\S_{i_0}$ be a connected component such that $\grp{f_*\Pi_1(\S_{i_0}, V\cap \S_{i_0})}$ is a non-cyclic algebraic extension of $w^k$ (where $k$ is the $\gcd$ of the $m_j$-s corresponding to the boundaries of $\S_{i_0}$), which exists by the definition of $\mathcal{S}$. By Proposition \ref{prop:Scon}, we must have $\rk\pi_1(\S_{i_0}) \ge \pi(w)$. On the other hand, $\S_{i_0}$ is a connected surface with boundary and hence is homotopic to a graph. Thus $\chi(\S_{i_0}) = 1 - \rk\pi_1(\S_{i_0}) \le 1 - \pi(w)$. The other $\S_i$ have $\chi(\S_i) \le 0$ since they are surfaces with boundary, and so we see that $\chi(\S) = \sum \chi(\S_i) \le 1 - \pi(w)$. 
\end{proof}

The rest of this section is devoted to the proof of Proposition \ref{prop:Scon}. The proof follows an algebraic approach.

Let $(\S, f)$ be as above. Denote by $g$ the genus of $\S$ and by $\ell$ its number of boundary components. As usual, $V$ denotes a chosen collection of one point on each boundary component.

We wish to relate $\pi_1(\S)$ to $\Pi_1(\S, V)$. To do so, we consider the group $\pi_1(\S/V)$. Observe that $f_*\pi_1(\S/V) = \grp{f_*\Pi_1(\S, V)}$, as both groups are generated by the images under $f$ of paths between the points of $V$. Hence $f_*\pi_1(\S/V)$ is a non-cyclic algebraic extension of $w^d$. On the other hand, $\S/V$ is obtained by gluing the $\ell$ points of $V$ together, which is homotopy equivalent to wedging $\S$ with $\ell-1$ circles. Hence $\pi_1(\S/V)\cong \pi_1(\S)*F_{\ell-1}$. Furthermore, as $\S$ is a connected surface with boundary it is homotopy equivalent to a graph, and hence its $\pi_1$ is free. Thus $\pi_1(\S/V)$ is a free group as well, with $\rk \pi_1(\S/ V) = \rk \pi_1(\S) + (\ell - 1)$, or, equivalently, $\rk \pi_1(\S) = \rk \pi_1(\S/V) - (\ell - 1)$.

Thus, it is enough to bound $\rk\pi_1(\S/V)$ from below. Since $H = f_*\pi_1(\S/V)$ is a non-cyclic algebraic extension of $w^d$, $\rk{H} \ge \pi(w)$ (by Lemma \ref{lem:lemma2}). As $\rk \pi_1(\S/V) \ge \rk f_*\pi_1(\S/V)$, we conclude that $\rk \pi_1(\S/V) \ge \pi(w)$. This implies that $\rk \pi_1(\S) \ge \pi(w) - (\ell - 1)$, which is not good enough for us.

We can get a better bound by bounding $\rk \pi_1(\S/ V) - \rk H$ from below. Indeed,
\begin{eqnarray*}
   \rk \pi_1(\S) = \rk \pi_1(\S/V) - (\ell - 1) = \rk H + (\rk \pi_1(\S/V) - \rk H) - (\ell - 1) \ge  \\ 
   \ge \pi(w) - (\ell - 1) + (\rk \pi_1(\S/ V) - \rk H).    
\end{eqnarray*}
In particular, to prove Theorem \ref{thm:EwT}, it is enough to show $\rk \pi_1(\S/ V) - \rk H \ge \ell - 1$.

Fix a point $v_1\in V$. Let $t_2,...,t_{\ell}$ be non-intersecting paths connecting the other points of $V$ to $v_1$ (we can also define $t_1$ to be the trivial path from $v_1$ to itself), and let $u_1, ..., u_\ell$ be loops homotopic to the boundary component, each based in the corresponding point of $V$. We can think of both the $t_i$ and the $u_i$ as elements of $\pi_1(\S/V)$, but we also have that for each $i$, $t_i u_i t_i^{-1}$ represents a loop, based at $v_1$, around the $i$-th boundary component. Consider the loop $u_1 t_2 u_2 t_2^{-1} \cdots t_\ell u_\ell t_\ell^{-1}$. It is a loop based at $v_1$ and enclosing all the boundary components. Change it slightly by a homotopy so it does not intersect the boundary other than at $v_1$. It then separates $\S$ to two components - one which is a disk with $\ell$ smaller disks removed from it, and one which is a connected surface of genus $g$ with one disk removed from it. The latter component has a fundamental group which is a free group of rank $2g$, and for a suitable choice of generators $a_1,...,a_g,b_1,...,b_g$ we can write its boundary as $[a_1, b_1]\cdots [a_g, b_g]$. Observe that this boundary is homotopic to our loop $u_1 t_2 u_2 t_2^{-1} \cdots t_\ell u_\ell t_\ell^{-1}$. Looking at $\S/V$, our loop also separates it to two components. The surface component is the same, with the only change being in the disk component: its $\pi_1$ can now be identified with the free group generated by $u_1,...,u_\ell, t_2,...,t_\ell$. Hence, by the van Kampen theorem, we get that $\pi_1(\S/V)$ is the group generated by $a_1,...,a_g,b_1,...,b_g, u_1,...,u_\ell, t_2,...,t_\ell$ subject to the unique relation
$$
[a_1, b_1]\cdots [a_g, b_g] = u_1 t_2 u_2 t_2^{-1} \cdots t_\ell u_\ell t_\ell^{-1}.
$$

Consequently, $H=f_*\pi_1(\S/V)$ is generated by the images of $a_i, b_i, t_j, u_j$ under $f_*$. As all the $u_i$ are mapped to powers of $w$, we see that $H$ is in fact generated by $f_*(a_i)$, $f_*(b_i)$, $f_*(t_i)$, and some $w^d$.

The key observation is that this is $(\ell - 1)$ generators less than the number of generators of $\pi_1(\S/V)$, which yields the desired bound $\rk \pi_1(\S/V) - \rk H \ge \ell - 1$.

Formally, we argue as follows. We see that $H$ is generated by $2g + (\ell - 1) + 1 = 2g + \ell$ generators. However, they satisfy an equation (the image of the equation satisfied in $\pi_1(\S/V)$). As $H$ is free, this implies that $\rk H$ is strictly less than $2g + \ell$. Indeed, it is well known that if $F$ is a free group of rank $r$, any set of $r$ generators is free (see \cite[Proposition 2.7]{LyndonSchupp}). 

Hence $\rk H < 2g + \ell$. On the other hand, $\rk \pi_1(\S/ V) = 2g + 2\ell - 2$. This can be seen either from the presentation, or from the following computation:
\begin{eqnarray*}
\rk \pi_1(\S/V) = \rk \pi_1(\S) + \ell - 1 = 1 - \chi(\S) + \ell - 1 = \\ 
= 1 - (2 - 2g - \ell) + \ell - 1 = 2g + 2\ell - 2
\end{eqnarray*}
Here we used the fact that $\chi(\S) = 2 - 2g - \ell$, as it is a surface of genus $g$ with $\ell$ disks removed.

Concluding, we have that $\rk \pi_1(\S/V) = 2g + 2\ell - 2$ and $\rk H \le 2g + \ell - 1$. Thus
$$
\rk \pi_1(\S/V) - \rk H \ge 2g + 2\ell - 2 - (2g + \ell - 1) = \ell - 1
$$
as desired.

\section{A stronger result}\label{sec:dep}
In this section, we prove our main result (Theorem \ref{thm:EwTpi}): if $w\in F_r$ is a non-power, then for any nonzero $m_1, \dots, m_\ell \in \ZZ$, if we set $T = \xi_{m_1}\dots \xi_{m_\ell}$, we have:
$$
\EE_w[T] = \grp{T,1} + \grp{T, \tr + \overline{\tr}} \cdot |\CommCrit(w)| \cdot n^{1 - \pi(w)} + O(n^{-\pi(w)}).
$$

By Corollary \ref{cor:EwT1}, it is enough to restrict ourselves to admissible pairs $(\S, f)$ such that for each of their connected components $\S_i$, the group $\grp{f_*\Pi_1(\S_i, V\cap \S_i)}$ is an algebraic extension of $w^d$ (where $d$ is the $\gcd$ of the $m_j$-s corresponding to the boundary circles of $\S_i$), and for at least one component this group is not cyclic. As in the previous section, we focus on the "special" connected component: we begin by proving a result on admissible pairs $(\S, f)$ for $(w^{m_1},...,w^{m_\ell})$ where $\S$ is \emph{connected} and $\grp{f_*\Pi_1(\S, V)}$ is a non-cyclic algebraic extension of $w^d$ (for $d=\gcd(m_1,...,m_\ell)$). In fact, it is more convenient to restrict ourselves to a slightly larger class - firstly, we only require that $\grp{f_*\Pi_1(\S, V), w}$ is a non-cyclic algebraic extension of $\grp{w}$ (this is indeed a weaker condition, by Lemma \ref{lem:lemma2}, and by the obvious fact that if a group $H$ is non-cyclic then so is the larger group $\grp{H,w}$). Furthermore, while we usually restrict ourselves to admissible pairs where the boundary words $f_*(\partial_i)$ are non-trivial (that is, $m_i\neq 0$), we will drop this requirement in this section. For this class of surfaces, we show:
\begin{prop}\label{prop:chiS}
Let $m_1,\dots, m_\ell\in \ZZ$, which may be $0$, and let $(\S, f)$ be an admissible pair for $w^{m_1},\dots, w^{m_\ell}$. Suppose that $\S$ is connected and that $\grp{f_*\Pi_1(\S, V),w}$ is a non-cyclic algebraic extension of $\grp{w}$. Then one of the following holds: 
\begin{enumerate}
    \item $\chi(\S) < 1 - \pi(w)$.
    \item $\chi(\S) = 1 -2\cl(w) = 1 - \pi(w)$ and $(\S, f)$ is an admissible pair for either $w$ or $w^{-1}$ (so it is a surface with exactly one boundary component, mapped to either $w$ or $w^{-1}$ by $f_*$.)
    \item $\chi(\S) = 1 - \pi(w)$ and $m_1 = \dots = m_\ell = 0$.
\end{enumerate}
\end{prop}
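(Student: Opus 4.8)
The plan is to recast the claim as a rank inequality for $H:=\grp{f_*\Pi_1(\S,V)}=f_*\pi_1(\S/V)$ and then to track precisely when it is sharp. Since $\grp{H,w}$ is a non-cyclic algebraic extension of $\grp w$ we have $H\ne\grp{w^i}$ for all $i$ (else $\grp{H,w}=\grp w$ would be cyclic), so Claim \ref{clm:Hw} gives $\rk H\ge\pi(w)$; as in Theorem \ref{thm:EwTpi}, $w$ is a non-power throughout. For an upper bound on $\rk H$ I would cut $\S$ along disjoint arcs joining the marked points, presenting it as a polygon (Figure \ref{fig:surf-gon}); this exhibits $\pi_1(\S/V)$ on generators $a_1,b_1,\dots,a_g,b_g$ (handles), $t_2,\dots,t_\ell$ (arcs from $v_1$ to the other marked points) and $u_1,\dots,u_\ell$ (boundary loops), subject to the single relation $[a_1,b_1]\cdots[a_g,b_g]=u_1(t_2u_2t_2^{-1})\cdots(t_\ell u_\ell t_\ell^{-1})$. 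Because $f_*(u_j)=w^{m_j}\in\grp w$, the group $H$ is generated by $f_*(a_i),f_*(b_i),f_*(t_j)$ together with a single power $w^d$ of $w$ (here $d=\gcd(m_1,\dots,m_\ell)$) --- at most $2g+\ell$ elements --- which satisfy the image of the surface relation. A short computation in a free group shows that this relation is trivial as an abstract word only when $g=0$ and all $m_i=0$, and in every case it follows that $\rk H\le 2g+\ell-1=\rk\pi_1(\S)$. Hence $\chi(\S)=1-\rk\pi_1(\S)\le 1-\rk H\le 1-\pi(w)$, which is the common inequality underlying all three alternatives and is essentially the content of Proposition \ref{prop:Scon}.

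Now assume $\chi(\S)=1-\pi(w)$, so that every inequality above is an equality: $\rk H=\pi(w)=2g+\ell-1$, and the generating set $\{f_*(a_i),f_*(b_i),f_*(t_j),w^d\}$ of $H$ carries exactly one unit of redundancy past a free basis. If $m_1=\dots=m_\ell=0$ then $f_*(u_j)=1$, so $H=\grp{f_*(t_2),\dots,f_*(t_\ell)}$, and $\rk H=\ell-1$ forces $g=0$ --- for $g\ge1$ the nontrivial image $[f_*(a_1),f_*(b_1)]\cdots[f_*(a_g),f_*(b_g)]=1$ of the surface relation would drop the rank further --- which is alternative (3). So suppose some $m_i\ne0$; I claim this forces $\ell=1$ and $m_1=\pm1$. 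Granting the claim, $g\ge1$ (else $\S$ is a disk and $\grp{H,w}=\grp w$ is cyclic), and $H=\grp{f_*(a_i),f_*(b_i):1\le i\le g}$ since $w^{\pm1}=\prod_{i=1}^g[f_*(a_i),f_*(b_i)]$ already lies there; then $\rk H=2g$ makes $\{f_*(a_i),f_*(b_i)\}$ a free basis of $H$ exhibiting $w$ as a standard surface word, so $H$ already contains $w$, $\grp{H,w}=H\in\CommCrit(w)$ (Definition \ref{def:commcrit}), and $(\S,f)$ is an admissible pair for $w$ or $w^{-1}$. Finally $\pi(w)=\rk H=2g$, while $w^{\pm1}=\prod_{i=1}^g[f_*(a_i),f_*(b_i)]$ gives $\cl(w)\le g$; combined with $\pi(w)\le 2\cl(w)$ (Claim \ref{prop:picl}) this yields $g=\cl(w)$, so $\chi(\S)=1-2g=1-2\cl(w)=1-\pi(w)$, which is alternative (2).

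It remains to prove the claim --- that equality with some $m_i\ne0$ forces $\ell=1$ and $m_1=\pm1$ --- and this is where I expect the real difficulty. The idea is that at equality the immersion carrying $(\S,f)$ has no slack, so any ``extra'' topology must vanish: for $\ell\ge2$ all $\ell$ boundary loops map into the rank-one subgroup $\grp w$, which ought to force strictly more than one unit of dependence among the generators of $H$, contradicting $\rk H=2g+\ell-1$; and for $|m_1|\ge2$ with $\ell=1$, realizing $w^{m_1}$ as a standard surface word in a free basis of a rank-$2g$ free group would force $\prod_{i}[f_*(a_i),f_*(b_i)]$ to be a proper power inside the free group $\grp{H,w}\le F_r$, which should not be possible. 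Turning these heuristics into proofs requires controlling how the dependencies coming from the cyclic boundary words interact, rather than merely counting them, and this is exactly what the dependence theorem of Louder and Wilton \cite{WiltonLouder} supplies: factoring $f$ through an immersion $\Gamma\imto\Om$ with $\pi_1(\Gamma)\cong H$ (Lemma \ref{lem:mapsto}) and applying their theorem at the extremal value of $\rk H$ should pin $\Gamma$ --- hence $(\S,f)$ --- down to the rigid shape of alternative (2). Thus the main obstacle is this rigidity step; the inequality $\chi(\S)\le1-\pi(w)$ and the analysis of the $\ell\le1$ cases are routine by comparison.
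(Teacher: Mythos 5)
Your first paragraph is sound and reproduces the paper's ``naive'' bound (the argument of Proposition \ref{prop:Scon}, adapted via Claim \ref{clm:Hw}): presenting $\pi_1(\S/V)$ by the polygon, observing that $H$ is generated by $f_*(a_i),f_*(b_i),f_*(t_j)$ and one power of $w$, and using the single relation to get $\rk H\le 2g+\ell-1=\rk\pi_1(\S)$, hence $\chi(\S)\le 1-\pi(w)$. Your analysis of the cases $m_1=\dots=m_\ell=0$ and (granting your claim) $\ell=1$, $m_1=\pm1$ is also correct, although the former is unnecessary: alternative (3) asserts nothing beyond $\chi(\S)=1-\pi(w)$ and all $m_i=0$, so there is nothing to prove there.

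The genuine gap is the claim you defer to the last paragraph, and it is not a technical remainder --- it \emph{is} the content of Proposition \ref{prop:chiS} beyond the naive bound. The presentation-counting argument structurally cannot yield it: the polygon presentation produces exactly one relation no matter how many boundary components map into $\grp{w}$ and no matter how large $\sum|m_i|$ is, so ``one unit of redundancy'' is all it will ever detect, and the heuristic that $\ell\ge 2$ or $|m_1|\ge 2$ ``ought to force more dependence'' is precisely what needs proof. In the paper this is done by an entirely different mechanism: one replaces $(\S,f)$ by a graph model ($\Ga$ from the components of $\S\setminus C$, the boundary circles $P$, and a circle $S$ for $w$), proves the comparison $\chi(\S)\le\chi(\Ga)-\ell_0$, proves Proposition \ref{prop:Gw} to identify the image of $\pi_1(\Ga_W)$ with $\grp{f_*\Pi_1(\S,V),w}$ (which is what verifies both $\rk\pi_1(\Ga_W)\ge\pi(w)$ and that $w_*[S]$ is not a proper power, hypotheses you would also have to check), and then applies the Louder--Wilton dependence theorem to get $\chi(\S)\le\bigl(1-\pi(w)\bigr)+1-\sum_i|m_i|$, which is strictly below $1-\pi(w)$ as soon as $\sum_i|m_i|\ge 2$; this is how $\ell=1$, $m_1=\pm1$ is forced at equality. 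Moreover the dependence theorem only applies when $E_P\to E_\Ga\times E_S$ is injective, and this can genuinely fail; the paper needs a separate surgery argument (Proposition \ref{prop:simplify}, cutting along an arc realizing a forbidden matching, which strictly increases $\chi$ and preserves the hypotheses) to reduce to the injective case. None of this is supplied by your proposal, so as written it proves only the non-strict inequality $\chi(\S)\le 1-\pi(w)$ together with the (correct) derivation of alternative (2) conditional on the unproven rigidity claim.
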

\begin{remark}\label{rmk:clpi}
In particular, we always have that $\chi(\S) \le 1 - \pi(w)$.
\end{remark}
\begin{remark}
Note that we already know that an admissible pair $(\S, f)$ for $w$ must have $\chi(\S) < 1 - \pi(w)$ unless $\S$ is of genus $\cl(w)$ and $\pi(w)=2\cl(w)$. Indeed, we must have $\chi(\S) \le 1 - 2\cl(w)$ (as mentioned in the introduction, see also \cite[Proposition 1.1]{Culler}), and $\pi(w) \le 2 \cl(w)$ (by Claim \ref{prop:picl}).
\end{remark}
Using this proposition, together with Corollary \ref{cor:EwT1}, it follows that if $T=\xi_{m_1}\cdots \xi_{m_\ell}$, where the $m_i$ are nonzero, there is some constant $C(T, w)$ so that
$$
\EE_w[T] = \grp{T,1} + C(T, w)\cdot n^{1-\pi(w)} + O(n^{-\pi(w)}).
$$
Indeed, consider some $(\S, f) \in \mathcal{S}(w^{m_1},...,w^{m_\ell})$. Let $\S_1, ..., \S_k$ be the connected components of $\S$. Let $\S_{i_0}$ be a component for which $\grp{f_*\Pi_1(\S_{i_0}, V\cap S_{i_0})}$ is a non-cyclic algebraic extension of some $w^k$. Then by Proposition \ref{prop:chiS}, either $(\S_0, f|_{\S_0})$ has $\chi(\S_0) < 1 - \pi(w)$, or $(\S_0, f|_{\S_0})$ is an admissible pair for $w^{\pm 1}$ and $\chi(\S_0) = 1 - 2\cl(w)$ and $\pi(w) = 2\cl(w)$ (the third case is not possible since we assumed the $m_i$ are nonzero). Furthermore, any other connected component is a surface with boundary which is not a disk and hence has $\chi(\S_i) \le 0$. Hence in the first case we trivially obtain that $\chi(\S) = \sum \chi(\S_i) < 1 - \pi(w)$ so the contribution of $(\S, f)$ is $O(n^{-\pi(w)})$. In the second case, $(\S, f)$ contributes $O(n^{1-\pi(w)})$. In fact, the pairs contributing to the $n^{1-\pi(w)}$ are exactly the pairs $(\S, f)$ where $\S$ has one component $\S_0$ such that $(\S_0, f|_{\S_0})$ is admissible for $w^{\pm 1}$, $\chi(\S_0) = 1 - \pi(w)$, and all other components have $\chi = 0$. The coefficient $C(T,w)$ is comprised of the coefficients of those cases.

In Subsection \ref{subsec:coeff}, we use the extra structure we have on pairs with $\chi(\S) = n^{1-\pi(w)}$ to show that
$$
C(T, w) = \grp{T,\tr+\overline{\tr}} \cdot |\CommCrit(w)|.
$$

The proof of Proposition \ref{prop:chiS} relies on a theorem of Louder and Wilton. Before we state it, we introduce the notion of a combinatorial circle.

\begin{define}
A combinatorial circle is a graph which is topologically a circle.
\end{define}

\begin{thm}[{Dependence Theorem, \cite[Theorem 2.21]{WiltonLouder}}]\label{thm:dep}
Let $S$ be a combinatorial circle, let $P$ be a collection of combinatorial circles with a covering map $\s:P \imto S$, and let $\Ga$ be a graph. Let $\lambda:P\to \Ga$ be a combinatorial map of graph (that is, a morphism of graphs - vertices map to vertices and edges map to edges). Suppose that no edge of $\Ga$ is covered by exactly one edge of $P$ (so every edge is either covered at least twice or not at all). Let $\Ga_W$ be the pushout $\sqcup_P S$, so we have a commutative diagram:
$$
\xymatrix{
P \ar[r]^\s \ar[d]^{\lambda}  &S \ar[d]^w\\
\Ga\ar[r]^{h} &\Ga_W
}
$$
For any graph $G$, denote its set of edges by $E_G$. Suppose that the natural map $E_P \to E_\Ga\times E_S$ is injective, and that $w_*([S])$ is not a proper power in $\pi_1(W)$. Then
$$
\chi(\Ga) \le \chi(\Ga_W) + 1 - \deg \s.
$$
\end{thm}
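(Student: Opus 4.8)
The plan is to reduce the statement to a purely combinatorial inequality by an Euler-characteristic count, and then to prove that inequality using the asphericity of the one-relator complex built from $\Ga_W$ and $w$ — which is exactly where the hypothesis that $w_*([S])$ is not a proper power is used. For the bookkeeping, put $d=\deg\s$; for a combinatorial circle $|V_S|=|E_S|$, and since $\s$ is a degree-$d$ covering, $|V_P|=|E_P|=d\,|E_S|$. Two edges of $\Ga\sqcup S$ become identified in the pushout $\Ga_W$ exactly when they are joined by a chain of edges of $P$, so if $B$ denotes the bipartite graph with vertex set $E_\Ga\sqcup E_S$ and one edge for each element of $E_P$ (joining its $\lambda$-image to its $\s$-image), then the number of edges of $\Ga_W$ is the number of connected components of $B$; here the hypothesis that $E_P\to E_\Ga\times E_S$ is injective enters, as it says precisely that $B$ has no parallel edges. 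Doing the same for vertices with the analogous bipartite graph $B'$ on $V_\Ga\sqcup V_S$ (which may have parallel edges), and counting components of $B$ and $B'$ via their first Betti numbers, the $|E_S|$- and $d|E_S|$-contributions cancel, leaving $\chi(\Ga_W)=\chi(\Ga)+b_1(B')-b_1(B)$. Thus the theorem is equivalent to the inequality $b_1(B')-b_1(B)\ge d-1$.

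Next I would pass to a $2$-complex picture. Attach a $2$-cell to $\Ga_W$ along the loop $w:S\to\Ga_W$ to form $X=\Ga_W\cup_wD^2$; since $\pi_1$ of the component of $\Ga_W$ containing the image of $w$ is free and $w_*([S])$ is not a proper power, Lyndon's Identity Theorem makes $X$ aspherical, with $\chi(X)=\chi(\Ga_W)+1$. On the other side, each component $P_i$ of $P$ covers $S$ with some degree $d_i$, so the loop $\lambda|_{P_i}:P_i\to\Ga$ becomes null-homotopic after the composite $\Ga\xrightarrow{h}\Ga_W\into X$: it bounds a disk $D_i$, namely the cone on $P_i$, mapped to $X$ as a degree-$d_i$ branched cover of the $2$-cell $D^2$. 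Attaching one such $D_i$ to $\Ga$ along each $\lambda|_{P_i}$ yields a $2$-complex $Y$ with $\chi(Y)=\chi(\Ga)+|\pi_0(P)|$ together with a map $g:Y\to X$ extending $\Ga\xrightarrow{h}\Ga_W\into X$. Since $\sum_i(d_i-1)=d-|\pi_0(P)|$, the target inequality becomes $\chi(Y)\le\chi(X)-\sum_i(d_i-1)$.

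The remaining and substantive step is to show that $g$ cannot collapse $Y$ too much. Because $X$ is aspherical, $g$ is determined up to homotopy by $g_*$ on $\pi_1$ and $H_2(X)=0$, so any excess negativity of $\chi(Y)$ is accounted for by relations among the loops $\lambda|_{P_i}$ in $\pi_1(\Ga)$ that factor through the single relator $w_*([S])$ of $X$; the hypotheses that every edge of $\Ga$ is covered either zero times or at least twice and that $E_P\into E_\Ga\times E_S$ together constrain such relations tightly enough that the only way the naive count of the previous two steps can be off by one is for a cancellation to propagate all the way around $S$, which would exhibit $w_*([S])$ as a proper power. I would implement this by induction — either on $\deg\s$, peeling off a component of $P$ or splitting one of degree $\ge2$ and checking that all the hypotheses (in particular non-powerness of $w_*([S])$) persist, or by computing $\pi_2(Y)$ as a module over $\ZZ[\pi_1 Y]$. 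I expect this induction — isolating the off-by-one degenerate configurations and certifying that each of them produces a proper power — to be the main obstacle; in Louder and Wilton's proof this is exactly what the ``stacking'' machinery accomplishes, by organizing the $2$-cells of the relevant complex so that the fatal cancellations become visible.
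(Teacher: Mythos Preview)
The paper does not prove this theorem: it is quoted verbatim from Louder and Wilton (\cite[Theorem~2.21]{WiltonLouder}) and used as a black box. The only argument the paper supplies near this statement is the short reduction in Theorem~\ref{thm:dep2}, which handles point components of $P$ by a trivial Euler-characteristic adjustment and then invokes Theorem~\ref{thm:dep}. So there is no ``paper's own proof'' to compare against.

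As for the content of your proposal: the first two paragraphs are fine bookkeeping. The pushout Euler-characteristic count via the bipartite graphs $B$ and $B'$ is a clean way to isolate the target inequality $b_1(B')-b_1(B)\ge d-1$, and the passage to the one-relator complex $X=\Ga_W\cup_w D^2$ with Lyndon's asphericity is the right global reason why the non-proper-power hypothesis matters. But your third paragraph is not a proof; it is a description of what a proof would have to accomplish, together with the admission that Louder and Wilton's stacking machinery is what actually does it. The phrases ``I would implement this by induction'' and ``I expect this induction\dots to be the main obstacle'' are accurate self-assessments: the whole difficulty of the theorem lies precisely in controlling those cancellations, and neither of the two routes you name (peeling off components of $P$, or computing $\pi_2(Y)$ as a $\ZZ[\pi_1 Y]$-module) goes through without substantial additional ideas. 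In particular, the ``dependent'' hypothesis (every edge of $\Ga$ covered zero or at least two times) is used in \cite{WiltonLouder} in a rather specific combinatorial way that your sketch does not yet engage with. So the proposal correctly locates the heart of the matter but does not supply it.
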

\begin{remark}
This is a slightly weaker statement than the original statement of \cite{WiltonLouder}: the requirement that no edge of $\Ga$ is covered exactly once by $P$ may be replaced by a less restrictive one (in the language of \cite{WiltonLouder}, we require that $P\to \Ga$ is dependent instead of weakly dependent).
\end{remark}
In our case, we will take $P$ to be the collection of boundary words of an admissible pair. Since we allow boundary components that map to $1$, it is useful to introduce a variation of the above theorem, in which we allow some components of $P$ to be points, mapping to a fixed basepoint chosen on $S$ - that is, "degree 0" coverings of $S$. Such components do not affect $\Ga$, but do change $\Ga_W$ - the vertices which are the images of the point components of $P$ are glued to the basepoint of $\Ga_W$. We claim that:
\begin{thm}\label{thm:dep2}
Let $S$ be a combinatorial circle with some chosen basepoint $o$, let $P$ be a collection of combinatorial circles and points which has at least once circle component, and let $\s:P\imto S$ be a map which is a covering of $S$ when restricted to the circle components of $P$ and maps the point components of $P$ to $o$. Let $\Ga$ be a graph, and suppose we have a map $\lambda:P\to\Ga$, such that no edge of $\Ga$ is covered by exactly one edge of $P$. Let $\Ga_W$ be the pushout $\Ga\sqcup_P S$. Suppose that the natural map $E_P \to E_\Ga\times E_S$ is injective, and suppose that $w_*([S])$ is not a proper power in $\pi_1(\Ga_W)$. Let $\ell_0$ be the number of point components of $P$. Then
$$
\chi(\Ga) - \ell_0 \le \chi(\Ga_W) + 1 - \deg{\s}.
$$
\end{thm}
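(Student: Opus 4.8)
The plan is to deduce Theorem \ref{thm:dep2} from the original Dependence Theorem \ref{thm:dep} by discarding the point components of $P$ and then accounting for them through the Euler characteristic of the pushout.

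First I would let $P'$ denote the union of the circle components of $P$, and set $\s' = \s|_{P'}\colon P'\imto S$ and $\lambda' = \lambda|_{P'}\colon P'\to\Ga$; by hypothesis $P'$ is nonempty, and $\deg\s' = \deg\s$ since the point components contribute $0$ to the degree. Write $\Ga_{W'} := \Ga\sqcup_{P'}S$ for the corresponding pushout, with structure maps $a\colon\Ga\to\Ga_{W'}$ and $b\colon S\to\Ga_{W'}$. Since a pushout along a disjoint union of indexing spaces may be computed in stages, $\Ga_W = \Ga\sqcup_{P}S$ is obtained from $\Ga_{W'}$ by identifying, for each of the $\ell_0$ point components $q_i$ of $P$, the vertex $a(\lambda(q_i))$ with the vertex $b(o)$. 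In particular $\Ga_W$ is obtained from $\Ga_{W'}$ by gluing together at most $\ell_0$ pairs of vertices.

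Next I would verify the hypotheses of Theorem \ref{thm:dep} for the data $(S, P', \s', \Ga, \lambda')$. Because point components carry no edges we have $E_{P'} = E_P$, so the condition that no edge of $\Ga$ is covered by exactly one edge of $P'$, and the injectivity of $E_{P'}\to E_\Ga\times E_S$, are inherited verbatim. The one point needing an argument is that the image of $[S]$ in $\pi_1(\Ga_{W'})$ (based at $b(o)$) is not a proper power. For this I would use that the quotient map $\Ga_{W'}\to\Ga_W$, which glues vertices of a graph, is $\pi_1$-injective on each component: gluing two vertices in the same component yields a space homotopy equivalent to a wedge with a circle, and gluing two vertices in different components yields a wedge of those components, so in either case $\pi_1$ of a component embeds. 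Hence $\pi_1(\Ga_{W'})\hookrightarrow\pi_1(\Ga_W)$ carries the class of $[S]$ to $w_*([S])$, and a proper power in the source would give a proper power in the target, contradicting the hypothesis.

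Applying Theorem \ref{thm:dep} then gives $\chi(\Ga) \le \chi(\Ga_{W'}) + 1 - \deg\s$. It remains to compare $\chi(\Ga_{W'})$ with $\chi(\Ga_W)$: identifying two distinct vertices of a graph lowers the Euler characteristic by exactly $1$ (one fewer vertex, the same set of edges), while identifying an already-merged pair changes nothing, so performing the $\le\ell_0$ identifications above gives $\chi(\Ga_W)\ge\chi(\Ga_{W'})-\ell_0$. Combining, $\chi(\Ga) \le \chi(\Ga_{W'}) + 1 - \deg\s \le \chi(\Ga_W) + \ell_0 + 1 - \deg\s$, i.e. $\chi(\Ga) - \ell_0 \le \chi(\Ga_W) + 1 - \deg\s$, as desired. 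The only genuinely delicate step is the transfer of the "not a proper power" condition, i.e. the $\pi_1$-injectivity of the vertex-gluing quotient together with keeping track of basepoints when $\Ga_{W'}$ is disconnected; everything else is bookkeeping with iterated pushouts and a one-line count of Euler characteristics.
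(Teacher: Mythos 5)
Your proof is correct and follows essentially the same route as the paper: restrict to the circle components $P'$, apply Theorem \ref{thm:dep} to the pushout $\Ga\sqcup_{P'}S$, and then observe that passing to $\Ga_W$ amounts to at most $\ell_0$ vertex identifications, each costing at most $1$ in Euler characteristic. Your explicit justification that the ``not a proper power'' hypothesis transfers from $\pi_1(\Ga_W)$ back to $\pi_1(\Ga\sqcup_{P'}S)$ (via $\pi_1$-injectivity of the vertex-gluing quotient) is a point the paper leaves implicit, and it is handled correctly.
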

\begin{proof}
Let $P'$ be the union of the cycle components of $P$, and consider $\Ga_W' = \Ga\sqcup_{P'} S$. By Theorem \ref{thm:dep}, we have that $\chi(\Ga) \le \chi(\Ga'_W) + 1 - \deg{\s}$ (note that restricting $\s$ to $P'$ does not change its degree). To obtain $P$ from $P'$, we need to add $\ell_0$ point components. The affect this has is only on the pushout $\Ga'_W$ - each point component of $P$ identifies a vertex of $\Ga$ with $o$. In other words, we take (at most) $\ell_0$ points of $\Ga'_W$ and glue them to $o$. Each such point decreases the Euler characteristic by $1$, so $\chi(\Ga_W) \ge \chi(\Ga'_W) - \ell_0$. Hence $\chi(\Ga) - \ell_0 \le (\chi(\Ga'_W) + 1 - \deg{\s}) - \ell_0 \le \chi(\Ga_W) + 1 - \deg{\s}$. 
\end{proof}

To relate the dependence theorem to our case, consider an admissible pair $(\S, f)$ for $w^{m_1}, ..., w^{m_\ell}$. As we are allowed to change $f$ by an homotopy relative to the basepoints $v_i$ on $\partial\Sigma$, we may assume that $f|_{\partial\Sigma}$ maps the $i$-th component to the reduced form of $w^{m_i}$. Now, choose one point $p_x$ on each edge $x$ of $\Om = \bigvee^r S^1$, and consider the set $C = f^{-1}(\{p_{x_1},...,p_{x_r}\})$. Up to a change of $f$ by homotopy (without changing it on the boundary), this is a disjoint union of arcs and curves on $\S$, and furthermore for each $x$ and for each arc or curve $\gamma\in f^{-1}(p_x)$, there is a tubular neighborhood $N(\gamma)$ such that the two components of $N(\gamma)\backslash \gamma$ are mapped to different ``sides'' of $p_x$ (in other words, in the terminology of \cite[Definition 3.1]{PuderMageeUn}, the map $f$ is transverse to every $p_{x_i}$). For any edge $x$ of $\Om$, we call an arc or a curve in $f^{-1}(p_x)$ an $x$-arc or an $x$-curve. We now construct a graph $\Ga$ by taking a vertex for each component of $\S - C$, and an edge for any arc or curve of $C$, connecting the vertices corresponding to the two components adjacent to it (if they are the same component we add a self-loop). In particular, $\Ga$ is equipped with a map $\Ga\to \Om$ (sending all vertices to $o$ and an edge corresponding to an $x$-arc or an $x$-curve to the corresponding edge $x$ in $\Om$). We construct a map $\S \to \Ga$ as follows: first, take (disjoint) tubular neighborhoods for each curve and arc of $C$, and let $N$ be their union. There is a natural bijection between the components of $\S-N$ and $\S-C$. We use it to map each component of $\S-N$ to the corresponding vertex of $\Ga$. Let $\gamma$ be an arc or a curve in $C$. It corresponds to some edge $e$ of $\Gamma$. By definition, its tubular neighborhood is homeomorphic to $\gamma\times [-1, 1]$. Note that $\gamma\times\{-1\}$ is at the boundary of one component of $\S-N$ and $\gamma\times \{1\}$ is at the boundary of another component of $\S - N$ (not necessarily different from the first one). The edge $e$ connects the corresponding vertices. If we parametrize $e$ by $[-1,1]$, we can map the tubular neighborhood $\gamma\times[-1,1]$ to $e$ by composing the projection $\gamma\times[-1,1]\to [-1,1]$ with this parametrization. Repeating this for all curves and arcs of $C$ results in the construction of a surjective continuous map $\S \to \Ga$.
Furthermore, the map $f:\S \to \Om$ is homotopic to the composition of this map $\S \to \Ga$ with the natural map $\Ga\to \Om$ constructed before.

Finally, let $P$ be a collection of circles representing the boundary of $\S$. More precisely, for any boundary component $\partial$ of $\S$, construct a combinatorial circle by taking a vertex for any component of $\partial - C$ (which is a collection of segments), and adding an edge for any point in $\partial \cap C$, connecting the vertices of the adjacent segments (in particular, if $\partial$ is mapped to $1$ under $f$, we get a point component). Note that $P$ is equipped with a natural map $P \imto \Ga$, induced from the composition $\partial \S \to \S \to \Ga$. As any edge of $\Ga$ corresponds either to an arc, which has two endpoint, or to a curve, which has no endpoints, every edge of $\Ga$ is covered by either zero or two edges of $P$. In particular, no edge of $\Ga$ is covered by exactly one edge of $P$. Take $S$ to be a combinatorial circle representing $w$ (so it is equipped with an immersion $S\imto \Om$). We have a natural covering map $P \imto S$, with the $i$-th component of $P$ covering $S$ with degree $|m_i|$. In particular, the total degree of the cover is $\deg\s = \sum |m_i|$.

\begin{prop}\label{prop:woforbidden}
Let $(\S, f)$ be as in Proposition \ref{prop:chiS} and let $\Ga$, $S$, $P$ be as above. Suppose $E_P \to E_\Ga\times E_S$ is injective. Then one of the following holds:
\begin{enumerate}
    \item $\chi(\S) < 1 - \pi(w)$
    \item $\chi(\S) = 1 - \pi(w)$ and $\ell = 1$ and $m_1 = \pm 1$, so $(\S, f)$ is an admissible pair for $w$ (or $w^{-1}$)
    \item $\chi(\S) = 1 - \pi(w)$ and $m_1 = ... = m_\ell = 0$
\end{enumerate}
\end{prop}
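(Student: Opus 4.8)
\emph{Proof plan.} The plan is to first prove the unconditional inequality $\chi(\S)\le 1-\pi(w)$ by a generator count, and then invoke Theorem~\ref{thm:dep2} to show that equality can only occur in the exceptional situations (2) and (3).

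\emph{Step 1: the inequality.} Write $g$ for the genus and $\ell$ for the number of boundary components of $\S$, and set $K=\grp{f_*\Pi_1(\S,V),w}$. Cutting $\S$ along arcs joining the points of $V$ presents $\S$ as a polygon, so — as recorded in the remark after Proposition~\ref{prop:fstar} — $\pi_1(\S/V)$ is generated by $2g$ ``handle'' generators, the $\ell$ boundary loops $u_1,\dots,u_\ell$, and $\ell-1$ connecting paths $v_2,\dots,v_\ell$, subject to a single relation (the boundary word of the polygon). Under $f_*$ the boundary loops go to powers of $w$, so, since $\grp{f_*\Pi_1(\S,V)}=f_*\pi_1(\S/V)$, the group $K$ is generated by the $2g$ images of the handle generators, the $\ell-1$ images $f_*(v_j)$, and $w$ — that is, by $2g+\ell=\rk\pi_1(\S)+1$ elements, which satisfy a relation (the image of the defining relation). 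By hypothesis $K$ is a non-cyclic, hence \emph{proper}, algebraic extension of $\grp w$, so $\rk K\ge\pi(w)$; and $w$ is not primitive in $K$, so this generating set of size $\rk\pi_1(\S)+1$ is not a basis, whence $\rk K\le\rk\pi_1(\S)$. Thus $\rk\pi_1(\S)\ge\pi(w)$, i.e.\ $\chi(\S)=1-\rk\pi_1(\S)\le 1-\pi(w)$. (This is exactly the argument behind Proposition~\ref{prop:Scon}, now run under the weaker hypothesis of Proposition~\ref{prop:chiS}.) If this is strict we are in case (1); so assume henceforth $\chi(\S)=1-\pi(w)$, and also assume $m_i\ne 0$ for at least one $i$, since if all $m_i=0$ we are already in case (3).

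\emph{Step 2: the dependence theorem.} Now $P$ has a circle component, so Theorem~\ref{thm:dep2} applies to $(\Ga,S,P,\s,\lambda)$, once its hypotheses are checked: injectivity of $E_P\to E_\Ga\times E_S$ is assumed; no edge of $\Ga$ is covered exactly once by $P$ (every edge of $\Ga$ comes from an arc of $C$, with two endpoints, or a curve, with none, hence is covered twice or zero times); and $w_*([S])$ is not a proper power in the free group $\pi_1(\Ga_W)$, because its image in $F_r$ under $\pi_1(\Ga_W)\to\pi_1(\Om)$ is $w\ne1$, so a factorization $w_*([S])=z^k$ with $k\ge2$ would give $w=\bar z^{\,k}$ with $\bar z\ne1$, contradicting that $w$ is a non-power. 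Hence $\chi(\Ga)-\ell_0\le\chi(\Ga_W)+1-\deg\s$, with $\ell_0=\#\{i:m_i=0\}$ and $\deg\s=\sum_i|m_i|$. Two more inputs: the surjection $\S\to\Ga$ constructed above has connected fibres, hence is $\pi_1$-surjective, so $\chi(\S)\le\chi(\Ga)$; and every $v_i$ is identified in $\Ga_W$ with the basepoint $o$ of $S$ (for $m_i\ne0$ the $i$-th circle of $P$ covers $S$ with initial vertex over $o$, for $m_i=0$ the $i$-th component of $P$ is the point glued to $o$), so $\S\to\Ga\hookrightarrow\Ga_W$ factors through $\S/V$ and, composed with $\Ga_W\to\Om$, recovers $f_*$; therefore the image of $\pi_1(\Ga_W)\to F_r$ contains $f_*\pi_1(\S/V)=\grp{f_*\Pi_1(\S,V)}$ and $\grp w$, hence contains $K$, giving $\rk\pi_1(\Ga_W)\ge\rk K\ge\pi(w)$ and $\chi(\Ga_W)\le1-\pi(w)$. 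Chaining, $1-\pi(w)=\chi(\S)\le\chi(\Ga)\le\chi(\Ga_W)+1-\deg\s+\ell_0\le(1-\pi(w))+1-\deg\s+\ell_0$, so $\deg\s\le\ell_0+1$.

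\emph{Step 3: conclusion and the main obstacle.} If $\ell_0=0$ then $\ell\le\sum_i|m_i|=\deg\s\le1$, forcing $\ell=1$ and $|m_1|=1$, which is case (2). It remains to rule out the mixed case $0<\ell_0<\ell$. Cap off each trivial boundary component of $\S$ with a disk, extending $f$ across it via the null-homotopy of $f|_{\partial_i}$, to get $(\hat\S,\hat f)$ with no trivial boundary component and $\chi(\hat\S)=\chi(\S)+\ell_0=1-\pi(w)+\ell_0$. Since $\hat f_*$ still kills the capped boundary loops, $\grp{\hat f_*\Pi_1(\hat\S,\hat V),w}$ is the subgroup of $K$ obtained by dropping the generators $f_*(v_j)$ from the capped components; granting that this is still a non-cyclic algebraic extension of $\grp w$, Step~1 applied to $(\hat\S,\hat f)$ gives $1-\pi(w)+\ell_0=\chi(\hat\S)\le1-\pi(w)$, i.e.\ $\ell_0\le0$, a contradiction — so the mixed case does not arise at equality, and the list is complete. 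I expect the main obstacle to be exactly this last point: verifying that capping off trivial boundary components preserves the ``non-cyclic algebraic extension'' hypothesis. The degenerate failures must be treated by hand — if the image group becomes cyclic then the capped surface would have to be a disk, impossible as it retains a boundary component mapping nontrivially; if it becomes non-cyclic but not an algebraic extension of $\grp w$, one passes to the smaller rose via Lemma~\ref{lem:mapsto} and reduces further. A secondary delicate point is the identification in Step~2 of the image of $\pi_1(\Ga_W)$ in $F_r$ with a group containing $K$, which is what couples the topological output of Theorem~\ref{thm:dep2} to the algebraic hypothesis and which relies on all the $v_i$ landing on one vertex of $\Ga_W$.
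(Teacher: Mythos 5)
Your Step 1 is correct, and it is a clean way to get the unconditional bound $\chi(\S)\le 1-\pi(w)$ (it is essentially the rank-count of Section \ref{sec:naive} run with $w$ adjoined as a generator, and the primitivity-of-$w$ trick replaces the paper's use of the image of the polygon relation). But the equality analysis has two problems. The smaller one is in Step 2: from the containment $K\le r_*\pi_1(\Ga_W)$ you conclude $\rk\pi_1(\Ga_W)\ge\rk K$. Rank is not monotone under inclusion of subgroups of free groups, so this inference is invalid as written. It is repairable: either prove the equality $r_*\pi_1(\Ga_W,o)=\grp{f_*\Pi_1(\S,V),w}$ (this is exactly the paper's Proposition \ref{prop:Gw}, proved via an explicit homotopy model of the pushout), or argue that $w$ cannot be primitive in the image $J=r_*\pi_1(\Ga_W)$: if it were, $\grp w$ would be a free factor of $J$, hence (by the Kurosh subgroup theorem) a free factor of the intermediate subgroup $K$, contradicting that $K$ is a non-cyclic algebraic extension of $\grp w$; then $\rk J\ge\pi(w)$ directly.

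The decisive gap is the one you flag yourself, and it is not a technicality. Because you only prove $\chi(\S)\le\chi(\Ga)$ (via $\pi_1$-surjectivity), while the paper proves the sharper inequality $\chi(\S)\le\chi(\Ga)-\ell_0$ (by collapsing $\S$ along the tubular neighborhoods, trading each annulus for two capping disks plus an edge and controlling sphere components), your chain at equality only yields $\deg\s\le\ell_0+1$. This does not exclude mixed configurations such as boundary words $(w,w^{-1},1,\dots,1)$ with $\chi(\S)=1-\pi(w)$ --- precisely the configurations the proposition must rule out. Your capping argument in Step 3 is meant to eliminate them, but it needs $\grp{\hat f_*\Pi_1(\hat\S,\hat V),w}$ to remain a non-cyclic algebraic extension of $\grp w$, and this can genuinely fail: dropping the marked points of the capped components discards the generators $f_*(t_j)$, and the surviving group can become cyclic (for instance when the nontrivial boundaries are $w,w^{-1}$ and $\hat f$ factors through a circle --- so your claim that the capped surface would then have to be a disk is false) or a non-algebraic extension; ``passing to a smaller rose and reducing further'' does not produce the Euler-characteristic contradiction you need. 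The paper avoids this dead end by never capping: the trivial boundaries stay in the picture as point components of $P$, so their connecting paths still contribute to $r_*\pi_1(\Ga_W)=K$, and the loss $\ell_0$ is absorbed on the topological side by $\chi(\S)\le\chi(\Ga)-\ell_0$. That sharper inequality is the missing ingredient in your argument. (For the residual case $\deg\s=1$, $\ell_0\ge1$ there is also a direct way out: the unique nontrivial boundary exhibits $w^{\pm1}$ as a product of $g$ commutators, so $2g\ge2\cl(w)\ge\pi(w)$ and $\chi(\S)=1-2g-\ell_0<1-\pi(w)$; but this only helps once $\deg\s\le1$ is known, which your weaker inequality does not give.)
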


In the next subsection we show how Proposition \ref{prop:chiS} follows from this proposition, by analyzing the cases where $E_P \to E_\Ga\times E_S$ is not injective. In the rest of this subsection, we prove this proposition. We begin by proving the following:
\begin{prop}\label{prop:Gw}
Let $P, \Ga, S$ be as in Theorem \ref{thm:dep2}. Suppose $\Ga$ is connected. Let $\Om$ be a graph, and suppose there are maps $g:\Ga\to \Om$, $u:S\to\Om$, agreeing on $P$. In particular, there is a well-defined map $r:\Ga_W \to \Om$. Let $o$ be a basepoint of $S$, and let $U$ be a collection of vertices of $P$ mapping to $o$, with exactly one on each component of $P$. Let $V$ be the image of $U$ in $\Ga$. By abuse of notation, we denote $w(o)$ by $o$ as well, and we also let $u$ denote the class $u_*[S] \in \pi_1(\Om)$ (after choosing an arbitrary orientation of $S$). Then
$$
r_*\pi_1(\Ga_W, o) = \grp{g_*\Pi_1(\Ga, V), u}.
$$

$$
\xymatrix{
P \ar[r]^\s \ar[d]^{\lambda}  &S \ar[d]^w\ar@{-->}@/^/[ddr]^u\\
\Ga\ar[r]^{h}\ar@{-->}@/_/[drr]^g &\Ga_W\ar@{-->}[dr]^{r} \\
& & \Om
}
$$
\end{prop}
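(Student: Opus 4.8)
\emph{Plan of proof.} Following the abuse of notation in the statement, write $o$ also for the basepoint $w(o)\in\Ga_W$, and set $o_\Om:=u(o)\in\Om$, so that $r(o)=o_\Om$. Since $g\circ\lambda=u\circ\s$ and every vertex in $U$ is sent to $o$ by $\s$, every point of $V=\lambda(U)$ is sent to $o_\Om$ by $g$; similarly $h$ sends every point of $V$ to $o$, since for $v=\lambda(\tilde v)$ with $\tilde v\in U$ we have $h(v)=w(\s(\tilde v))=w(o)=o$. Hence $g_*\Pi_1(\Ga,V)$ and $u=u_*[S]$ are contained in $\pi_1(\Om,o_\Om)$, and $r_*\pi_1(\Ga_W,o)\le\pi_1(\Om,o_\Om)$ as well; I will prove the asserted equality by establishing the two inclusions.

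\emph{The inclusion $\grp{g_*\Pi_1(\Ga,V),u}\subseteq r_*\pi_1(\Ga_W,o)$.} The image $w(S)\subseteq\Ga_W$ of the oriented circle $S$ is a loop based at $o$, and $r_*w_*[S]=u_*[S]=u$, so $u\in r_*\pi_1(\Ga_W,o)$. If $\gamma$ is any path in $\Ga$ between two points of $V$, then $h\circ\gamma$ is a loop in $\Ga_W$ based at $o$ (because $h$ sends $V$ to $o$), and $r_*[h\circ\gamma]=g_*[\gamma]$ since $r\circ h=g$; as $\Ga$ is connected, these classes generate $\grp{g_*\Pi_1(\Ga,V)}$, which therefore lies in $r_*\pi_1(\Ga_W,o)$. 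Together with $u\in r_*\pi_1(\Ga_W,o)$ this gives the inclusion.

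\emph{The inclusion $r_*\pi_1(\Ga_W,o)\subseteq\grp{g_*\Pi_1(\Ga,V),u}$.} This is the main point. First, $\Ga_W=h(\Ga)\cup w(S)$ is a union of two connected subgraphs, each containing $o$, whose intersection is exactly the image of $P$: if $w(s)=h(a)$ then $s$ and $a$ are identified in the pushout, and following a zig-zag of defining identifications forces $s\in\s(P)$, hence $w(s)\in w(\s(P))=h(\lambda(P))$; the reverse containment is clear. Now let $\delta$ be a loop at $o$, realised as an edge-path, and cut it into maximal sub-edge-paths lying alternately in $h(\Ga)$ and in $w(S)$; each break vertex lies in the image of $P$, hence is $w(\s(p))$ for some vertex $p$ of $P$. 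Using connectedness of $h(\Ga)$, insert a fixed connecting edge-path in $h(\Ga)$ from $o$ to every break vertex, immediately before and after each maximal $w(S)$-piece; this rewrites $[\delta]$ as a product of loops at $o$, each of which either (i) lies entirely in $h(\Ga)$, or (ii) has the form $\alpha\cdot\beta\cdot\alpha'$ with $\alpha,\alpha'$ edge-paths in $h(\Ga)$ and $\beta$ an edge-path in $w(S)$ whose endpoints lie in the image of $P$. For pieces of type (i): such a loop is the $h$-image of a path in $\Ga$ between two vertices of $h^{-1}(o)$, each of which lies in $\lambda(V(P))$ and is joined inside $\Ga$ to a point of $V$ by the $\lambda$-image of a path in a component of $P$; since $g\circ\lambda=u\circ\s$, the $g$-images of these connecting paths are $u_*$ of paths in $S$, and decomposing the latter via $\pi_1(S)\cong\ZZ=\grp{[S]}$ shows the $r$-image of a type-(i) loop lies in $\grp{g_*\Pi_1(\Ga,V),u}$. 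For pieces of type (ii): since $\s$ restricts to a covering of $S$ on the circle components of $P$, the edge-path $\beta$ lifts to an edge-path $\tilde\beta$ in $S$ with endpoints over the two break vertices, so $r_*[\beta]=u_*[\tilde\beta]$; pushing the endpoints of $\tilde\beta$ to the basepoint of $S$ along $S$ and again invoking $\pi_1(S)\cong\ZZ$ expresses $u_*[\tilde\beta]$ as a power of $u$ up to $g$-images of $\lambda$-images of paths in $P$, which were already absorbed in case (i). Assembling the pieces gives $r_*[\delta]\in\grp{g_*\Pi_1(\Ga,V),u}$.

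\emph{Expected main obstacle.} The difficulty is entirely in the second inclusion, and specifically in the fact that $h\colon\Ga\to\Ga_W$ and $w\colon S\to\Ga_W$ need not be injective — equivalently, in bookkeeping the van Kampen presentation of $\pi_1(\Ga_W)$ for the pushout along the possibly disconnected $P$ (note that $\pi_1(w(S))$ itself can be strictly larger than $\grp{u}$). The structural facts that make the decompose/lift/reroute argument go through are that $h(\Ga)\cap w(S)$ is exactly the image of $P$ and that $\s$ restricts to a covering on each circle component of $P$. Hypotheses such as $w_*[S]$ not being a proper power play no role here; they are carried along only because $P,\Ga,S$ are taken as in Theorem \ref{thm:dep2}.
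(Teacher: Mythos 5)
Your first inclusion is fine, and so are the structural facts you isolate (the intersection $h(\Ga)\cap w(S)$ is exactly the image of $P$, break vertices lie in that image, connectors to $o$ exist inside $h(\Ga)$ by connectedness). The gap is in the second inclusion, at exactly the point you yourself name as the difficulty and then do not confront: the non-injectivity of $h$ and $w$. A type-(i) loop at $o$ contained in $h(\Ga)$ is in general \emph{not} the $h$-image of a single path in $\Ga$: each edge lifts, but at a vertex of $\Ga_W$ with several $\Ga$-preimages consecutive lifted edges may attach to different preimages, so the lift is a broken chain of $\Ga$-paths with jumps between vertices of $\Ga$ identified only through zig-zags of identifications running through $P$ and $S$. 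The same failure occurs in type (ii): $w\colon S\to\Ga_W$ can identify distinct vertices and edges of $S$ (e.g.\ when $\lambda(e_1)=\lambda(e_2)$ but $\s(e_1)\neq\s(e_2)$), so ``$\beta$ lifts to an edge-path $\tilde\beta$ in $S$'' is false in general, and the covering property of $\s\colon P\to S$ that you invoke has no bearing on that lifting statement. Moreover, even where your lifts do exist, the correction terms you produce are $u_*$ of paths in $S$ from a non-basepoint vertex to the basepoint; such a partial traversal of $S$ is not a loop, so ``decomposing via $\pi_1(S)\cong\ZZ$'' does not apply to it, and its image need not lie in $\grp{g_*\Pi_1(\Ga,V),u}$ on its own. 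These terms become powers of $u$, or elements of $g_*\Pi_1(\Ga,V)$, only after you cut every lift at every identified vertex, choose the connecting $P$-paths consistently across each elementary identification $\s(p)=\s(p')$ (so that the two partial traversals concatenate to a loop in $S$), and absorb changes of connector at a common $\Ga$-vertex as elements of $\Pi_1(\Ga,V)$. That bookkeeping is the actual content of the proof and is missing; ``already absorbed in case (i)'' is not an argument, since case (i) as written neither produces nor controls these terms.

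For comparison, the paper's proof avoids this combinatorial bookkeeping altogether: it replaces $\Ga_W$ by the homotopy-equivalent double mapping cylinder $X$ of $\Ga\xleftarrow{\lambda}P\xrightarrow{\s}S$, in which $\Ga$ and $S$ sit disjointly and all gluing data is carried by $P\times[0,1]$, and computes the image of $\pi_1$ stage by stage: first $\Ga$ with the vertices coming from $\s^{-1}(o)$ wired to a new basepoint (giving the fundamental group of $\Ga$ with those vertices identified), then wedging on the circle $S$, whose class maps to $u$ and is already in the image because each subsegment of $P$ covering $S$ once maps to a closed loop there, then attaching rectangles whose boundaries map to $1$; finally it converts paths between the glued vertices into paths between points of $V$ via the $P$-components, which yields $\grp{g_*\Pi_1(\Ga,V),u}$. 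If you want to keep your direct decomposition in $\Ga_W$, you must reproduce that control by hand along the chains of identifications; as written, the two lifting claims and the unexamined partial-traversal corrections constitute a genuine gap.
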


\begin{proof}[Proof of Proposition \ref{prop:Gw}]
Denote by $O_\Ga$ the set of vertices of $\Ga$ that are mapped to $o\in\Ga_W$ by $h$. Observe that the map $g$ descends to a well defined map $\Ga/O_\Ga\to\Om$. Our strategy will be to show the chain of equalities
$$
r_*\pi_1(\Ga_W, o) = g_*\pi_1(\Ga/O_\Ga) = \grp{g_*\Pi_1(\Ga, V), u}.
$$

We begin by showing that $r_*\pi_1(\Ga_W, o) = g_*\pi_1(\Ga/O_\Ga)$. To this end, consider the space obtained from $(P\times[0,1])\sqcup \Ga\sqcup S$ by gluing $P\times\{0\}$ onto $\Ga$ according to $\lambda$ and gluing $P\times\{1\}$ onto $S$ according $\sigma$ (see Figure \ref{subfig:GwX}). The resulting space $X$ is homotopy equivalent to $\Ga_W$, since $\Ga_W$ can be obtained from $X$ by collapsing $P\times[0,1]$. Thus $\pi_1(\Ga_W, o) \cong \pi_1(X, o_X)$, where $o_X$ is the point of $X$ which is the image of $o\in S$. Moreover, there is a well-defined map $q:X\to \Om$, and $q_*\pi_1(X, o_X) = r_*\pi_1(\Ga_W, o)$. Hence it is enough to show that $q_*\pi_1(X, o_X) = g_*\pi_1(\Ga/O_\Ga)$. We proceed by studying the structure of $X$. Let $O_P$ be the set of vertices of $P$ which map to $o\in S$. They divide the components of $P$ into segments $\{I_j\}_{j\in\mathcal{J}}$, where for each $I_j$, the map $\sigma|_{I_j}:I_j \to S$ covers $S$ exactly once. Moreover, the image of $O_P$ in $\Ga$ is exactly $O_\Ga$. Thus the image of $O_P\times [0,1]\subseteq P\times [0,1]$ in $X$ consists of segments connecting the points of $O_\Ga$ to $o_X$. As for the segments $I_j\subseteq P$, we can think of the image of each $I_j\times [0,1]$ in $X$ as a rectangle, with two of its sides being the segments $\partial I_j\times [0, 1]$, one other side being the image of $I_j$ in $\Ga$, and the last side being $S$ (so two of its vertices are the images of $\partial I_j$ in $\Ga$, and the two other vertices coincide and are in fact $o_X$). Hence we can construct $X$ iteratively as follows. Let $Y$ be the space obtained by taking $\Ga$ and connecting all the points of $O_\Ga$ to a single point $o_X$ (see Figure \ref{subfig:GwY}). Then $X$ is obtained from $Y$ by first adding a circle $S$, and then gluing a rectangle for each segment $I_j$ of $P$, as described above (see Figure \ref{subfig:GwYr}). Observe that almost by definition, $\pi_1(Y, o_X) \cong \pi_1(\Ga/O_\Ga)$. After adding the circle $S$, the fundamental group becomes $\pi_1(\Ga/O_\Ga) * \grp{[S]}$. The image of this in $\pi_1(\Om)$ is simply $g_*\pi_1(\Ga/O_\Ga)$: the image of $[S]$ is $u$, which is already contained in $g_*\pi_1(\Ga/O_\Ga)$ since the image of any segment $I_j$ of $P$ in $\Om$ is $u$ as well, and the image of any such segment in $\Ga/O_\Ga$ is a closed loop. It remains to study the effect of adding the rectangles. It is easy to see that the boundary of each rectangle is mapped to $1$ in $\Om$, so adding them does not affect the image of the $\pi_1(X)$ in $\Om$. All in all, we see that $r_*\pi_1(\Ga_W, o) = q_*\pi_1(X, o_X) = g_*\pi_1(\Ga/O_\Ga)$.

Finally, note we trivially have $g_*\pi_1(\Ga/ O_\Ga) = \grp{g_*\Pi_1(\Ga, O_\Ga)}$. We claim that $\grp{g_*\Pi_1(\Ga, O_\Ga)} = \grp{g_*\Pi_1(\Ga, V), u}$. Indeed, any path between two points $o_1, o_2 \in O_\Ga$ can be decomposed as a path from $o_1$ a point $v_1 \in V$ along the image of some component of $P$, a path from $v_1$ to some $v_2 \in V$, and a path from $v_2$ to $o_2$ along the the image of another component of $P$. Clearly the first and last parts map to powers of $u$, while the middle part is an element of $\Pi_1(\Ga, V)$. Hence $\grp{g_*\Pi_1(\Ga, O_\Ga)} \subseteq \grp{g_*\Pi_1(\Ga, V), u}$, and the opposite inclusion holds since $u\in g_*\pi_1(\Ga/O_\Ga)$, as explained in the previous paragraph.

\begin{figure}
    \centering
    \begin{subfigure}[b]{0.3\textwidth}
    \centering
    \def\svgwidth{1.5\columnwidth}
\begingroup%
  \makeatletter%
  \providecommand\color[2][]{%
    \errmessage{(Inkscape) Color is used for the text in Inkscape, but the package 'color.sty' is not loaded}%
    \renewcommand\color[2][]{}%
  }%
  \providecommand\transparent[1]{%
    \errmessage{(Inkscape) Transparency is used (non-zero) for the text in Inkscape, but the package 'transparent.sty' is not loaded}%
    \renewcommand\transparent[1]{}%
  }%
  \providecommand\rotatebox[2]{#2}%
  \newcommand*\fsize{\dimexpr\f@size pt\relax}%
  \newcommand*\lineheight[1]{\fontsize{\fsize}{#1\fsize}\selectfont}%
  \ifx\svgwidth\undefined%
    \setlength{\unitlength}{172.57648247bp}%
    \ifx\svgscale\undefined%
      \relax%
    \else%
      \setlength{\unitlength}{\unitlength * \real{\svgscale}}%
    \fi%
  \else%
    \setlength{\unitlength}{\svgwidth}%
  \fi%
  \global\let\svgwidth\undefined%
  \global\let\svgscale\undefined%
  \makeatother%
  \begin{picture}(1,0.40733643)%
    \lineheight{1}%
    \setlength\tabcolsep{0pt}%
    \put(0,0){\includegraphics[width=\unitlength,page=1]{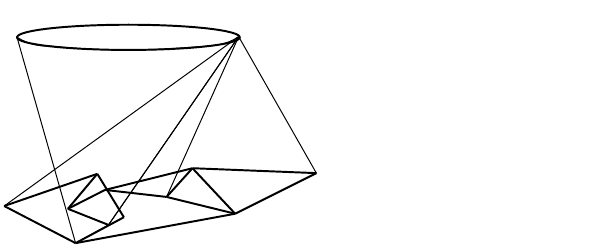}}%
    \put(0.07458855,0.37166386){\makebox(0,0)[lt]{\lineheight{1.25}\smash{\begin{tabular}[t]{l}$S$\end{tabular}}}}%
    \put(0.40739744,0.36415191){\makebox(0,0)[lt]{\lineheight{1.25}\smash{\begin{tabular}[t]{l}$o_X$\end{tabular}}}}%
    \put(0.53364124,0.09463585){\makebox(0,0)[lt]{\lineheight{1.25}\smash{\begin{tabular}[t]{l}$\Gamma$\end{tabular}}}}%
    \put(0.49922602,0.19136617){\makebox(0,0)[lt]{\lineheight{1.25}\smash{\begin{tabular}[t]{l}$P\times\{\frac12\}$\end{tabular}}}}%
    \put(0,0){\includegraphics[width=\unitlength,page=2]{PSG.pdf}}%
  \end{picture}%
\endgroup%

    \caption{The space $X$}
    \label{subfig:GwX}
    \end{subfigure}
    \hfill
    \begin{subfigure}[b]{0.3\textwidth}
    \centering
    \def\svgwidth{\columnwidth}
\begingroup%
  \makeatletter%
  \providecommand\color[2][]{%
    \errmessage{(Inkscape) Color is used for the text in Inkscape, but the package 'color.sty' is not loaded}%
    \renewcommand\color[2][]{}%
  }%
  \providecommand\transparent[1]{%
    \errmessage{(Inkscape) Transparency is used (non-zero) for the text in Inkscape, but the package 'transparent.sty' is not loaded}%
    \renewcommand\transparent[1]{}%
  }%
  \providecommand\rotatebox[2]{#2}%
  \newcommand*\fsize{\dimexpr\f@size pt\relax}%
  \newcommand*\lineheight[1]{\fontsize{\fsize}{#1\fsize}\selectfont}%
  \ifx\svgwidth\undefined%
    \setlength{\unitlength}{136.71889437bp}%
    \ifx\svgscale\undefined%
      \relax%
    \else%
      \setlength{\unitlength}{\unitlength * \real{\svgscale}}%
    \fi%
  \else%
    \setlength{\unitlength}{\svgwidth}%
  \fi%
  \global\let\svgwidth\undefined%
  \global\let\svgscale\undefined%
  \makeatother%
  \begin{picture}(1,0.50468738)%
    \lineheight{1}%
    \setlength\tabcolsep{0pt}%
    \put(0,0){\includegraphics[width=\unitlength,page=1]{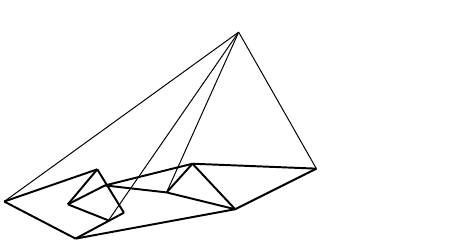}}%
    \put(0.51424653,0.45965889){\makebox(0,0)[lt]{\lineheight{1.25}\smash{\begin{tabular}[t]{l}$o_X$\end{tabular}}}}%
    \put(0.67360059,0.11945621){\makebox(0,0)[lt]{\lineheight{1.25}\smash{\begin{tabular}[t]{l}$\Gamma$\end{tabular}}}}%
    \put(0,0){\includegraphics[width=\unitlength,page=2]{PSG-y.pdf}}%
  \end{picture}%
\endgroup%

    \caption{The space $Y$}
    \label{subfig:GwY}
    \end{subfigure}
    \hfill
    \begin{subfigure}[b]{0.3\textwidth}
    \centering
    \def\svgwidth{1.5\columnwidth}
\begingroup%
  \makeatletter%
  \providecommand\color[2][]{%
    \errmessage{(Inkscape) Color is used for the text in Inkscape, but the package 'color.sty' is not loaded}%
    \renewcommand\color[2][]{}%
  }%
  \providecommand\transparent[1]{%
    \errmessage{(Inkscape) Transparency is used (non-zero) for the text in Inkscape, but the package 'transparent.sty' is not loaded}%
    \renewcommand\transparent[1]{}%
  }%
  \providecommand\rotatebox[2]{#2}%
  \newcommand*\fsize{\dimexpr\f@size pt\relax}%
  \newcommand*\lineheight[1]{\fontsize{\fsize}{#1\fsize}\selectfont}%
  \ifx\svgwidth\undefined%
    \setlength{\unitlength}{172.57648247bp}%
    \ifx\svgscale\undefined%
      \relax%
    \else%
      \setlength{\unitlength}{\unitlength * \real{\svgscale}}%
    \fi%
  \else%
    \setlength{\unitlength}{\svgwidth}%
  \fi%
  \global\let\svgwidth\undefined%
  \global\let\svgscale\undefined%
  \makeatother%
  \begin{picture}(1,0.40733643)%
    \lineheight{1}%
    \setlength\tabcolsep{0pt}%
    \put(0,0){\includegraphics[width=\unitlength,page=1]{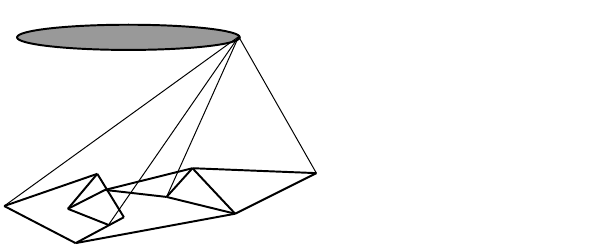}}%
    \put(0.07458855,0.37166386){\makebox(0,0)[lt]{\lineheight{1.25}\smash{\begin{tabular}[t]{l}$S$\end{tabular}}}}%
    \put(0.40739744,0.36415191){\makebox(0,0)[lt]{\lineheight{1.25}\smash{\begin{tabular}[t]{l}$o_X$\end{tabular}}}}%
    \put(0.53364124,0.09463585){\makebox(0,0)[lt]{\lineheight{1.25}\smash{\begin{tabular}[t]{l}$\Gamma$\end{tabular}}}}%
    \put(0.49922602,0.19136617){\makebox(0,0)[lt]{\lineheight{1.25}\smash{\begin{tabular}[t]{l}$P\times\{\frac12\}$\end{tabular}}}}%
    \put(0,0){\includegraphics[width=\unitlength,page=2]{PSG-yx.pdf}}%
  \end{picture}%
\endgroup%

    \caption{Adding one rectangle to $Y$}
    \label{subfig:GwYr}
    \end{subfigure}
    \caption{Some spaces constructed in the proof of Proposition \ref{prop:Gw}. The marked points consist of $o_X$, $O_\Ga$, and $O_P\times \{\frac12\}$}
\end{figure}

\end{proof}

\begin{proof}[Proof of Proposition \ref{prop:woforbidden}]
We begin by analyzing the case where $m_1=\dots=m_\ell=0$. Let $\S'$ be the surface obtained by gluing disks to $\ell - 1$ of the boundaries of $\S$. In particular, $\chi(\S) = \chi(\S') - (\ell - 1)$. We keep the marked points $V$ of $\S$ in $\S'$, but note that they are no longer on the necessarily on the boundary. As the boundaries of $\S$ map to $1$ under $f_*$, we have that $\grp{f_*\Pi_1(\S, V)} = \grp{f_*\Pi_1(\S', V)}$. Denote this group by $H$. Consider now the quotient $\S'/V$. Since $\S'$ is a surface with boundary, it is homotopy equivalent to a graph and hence its $\pi_1$ is free. As $\S'/V$ is obtained by gluing together $\ell$ points of $\S'$, we obtain that $\pi_1(\S'/V)$ is also free, and $\rk \pi_1(\S'/V) = \rk \pi_1(\S') + (\ell - 1)$. On the other hand, $f$ descends to a map $\S'/V\to \Om$, and $f_*\pi_1(\S'/V)$ can be identified with $H$. By assumption $\grp{H,w}$ is a non-cyclic algebraic extension of $w$, and thus by Claim \ref{clm:Hw}, $\rk H \ge \pi(w)$. Thus
$$
\rk \pi_1(\S') + (\ell - 1) = \rk \pi_1(\S'/V) \ge \rk f_*\pi_1(\S'/V) = \rk H \ge \pi(w).
$$
It immediately follows that
$$
\chi(\S) = \chi(\S') - (\ell - 1) = 1 - \rk \pi_1(\S') - (\ell - 1) \le 1 - \pi(w),
$$
as we wanted.

Assume now that not all the $m_i$ are $0$. Let $\partial_0$ be the set of boundary components of $\S$ corresponding to the $m_i$ which are $0$ (so they are mapped to $1$ by $f$) and let $\ell_0$ be the size of $\partial_0$. We begin by showing that 
$$\chi(\S) \le \chi(\Ga) - \ell_0.
$$
To show this, we may assume that $\ell_0= 0$: indeed, let $C$ be the collection of arcs and curves used to define $\Ga$, as above. Note that no arc of $C$ touches $\partial_0$. Hence if we glue disks onto those boundaries ("filling the holes"), $\Ga$ does not change but $\chi(\S)$ increases by $\ell_0$. Hence if we can prove that $\chi(\S) \le \chi(\Ga)$ for this new $\S$, we will be done. Hence we may assume $\ell_0 = 0$. Now, note that we may further assume no curve of $C$ bounds a disk (otherwise we can change $f$ by an homotopy in a way that eliminates this curve from the pre-images $f^{-1}(p_x)$). Replace each curve and arc of $C$ by a small tubular neighborhood, to get a collection of rectangles and annuli. Removing them from $\S$ leaves a collection of connected components $\S_i$. In other words, $\S$ is obtained by taking a collection of surfaces with boundary $\S_i$, and connecting some of their boundaries to each other by a collection of rectangles and annuli. $\Ga$ is obtained by collapsing each $\S_i$ to a point and each rectangle or annulus to an edge. We now do this collapse in an iterative process, and keep track of $\chi$, as follows. First, collapse any rectangle to an arc. This is a homotopy equivalence and thus does not affect $\chi$. Next, replace every annulus by two disks filling its boundary circles (so we "close some holes" in some of the $\S_i$), connected by an arc. This increases $\chi$ by $1$ per annulus, as we add two disks and an arc and remove an annulus (which has $\chi=0$). We are left with a new space $Y$, which is collection of surfaces $\S_i'$, not necessarily with boundary, connected together by some edges. Then
\begin{eqnarray*}
\chi(\S) = \chi(Y) - \#\{\text{annuli we had}\} = \sum_i \chi(\S_i') - \#\{\text{edges of Y}\} - \#\{\text{annuli we had}\} = \\
\sum_i \left(\chi(\S_i') - \frac{1}{2}\#\{\text{annuli that touched } \S_i\}\right) -  \#\{\text{edges of Y}\}.
\end{eqnarray*}
(Note that if an annulus touched the same $\S_i$ on both of its sides it should be counted twice in $\#\{\text{annuli that touched } \S_i\}$).
On the other hand, $\Ga$ is obtained by collapsing each $\S_i'$ to a point, so $\chi(\Ga) = \sum_i 1 - \#\{\text{edges of Y}\}$. Hence to show that $\chi(\S) \le \chi(\Ga)$, it is enough to show that $\chi(\S_i') - \frac{1}{2}\#\{\text{annuli that touched } \S_i\} \le 1$ for any $\S_i'$. This is clear: first, note that the only connected surface (possibly with boundary) that has $\chi > 1$ is the sphere, which has $\chi = 2$. Thus we only have to deal with the $\S_i'$ which are spheres. Since the original $\S_i$ all had boundary, $\S_i'$ can have no boundary only if an annulus was attached to every boundary component of $\S_i$. In this case, $\S_i$ must have had at least two boundary components, otherwise we would have a curve bounding a disk (or a disk component of the original $\S$ which is impossible since $\S$ is connected). Thus the contribution is at most $2 - \frac 12 \cdot 2 = 1$, and we are done. Hence we showed that $\chi(\S) \le \chi(\Ga)$ if $\ell_0 = 0$, or more generally, $\chi(\S) \le \chi(\Ga) - \ell_0$.

Let $\Ga_W$ be as in the dependence theorem. Suppose we could show that $\rk \pi_1(\Ga_W) \ge \pi(w)$ and that the class of $w_*[S]$ is not a proper power in $\pi_1(\Ga_W)$. The first condition guarantees that $\chi(\Ga_W) = 1 - \rk \pi_1(\Ga_W) \le 1 - \pi(w)$. The second condition allows us to apply the dependence theorem (or our generalized version, Theorem \ref{thm:dep2}), and we get:
$$
\chi(\S) \le \chi(\Ga) - \ell_0 \le 1 - \pi(w) + 1 - \sum |m_i|.
$$
If $\sum |m_i| > 1$, $\chi(\S) < 1 - \pi(w)$ and we are done. Otherwise, $\sum |m_i| = 1$ and we must have $\ell = 1$ and $m_1 = \pm 1$. In this case, by the second remark following the statement of Proposition \ref{prop:chiS}, either $\chi(\S) < 1 - \pi(w)$ or $\pi(w)=2\cl(w)$ and $\chi(\S) = 1 - 2\cl(w)$, as desired. Thus the conclusion of the proposition is satisfied in either case.

It remains to show that $\rk \pi_1(\Ga_W) \ge \pi(w)$ and that $w_*[S]$ is not a proper power in $\pi_1(\Ga_W)$. Both claims follow from Proposition \ref{prop:Gw}, as we now explain. Choose a collection of marked vertices $U$ on $P$ as follows: recall that each vertex of $P$ corresponds to a segment of $\partial \S$, and take $U$ to be the set of vertices which correspond to the segments that contain a point of $V$ (clearly there is exactly one on each component of $P$). By abuse of notation, we denote by $V$ their image in $\Ga$ (it is simply the collection of components of $\S - C$ containing a point of $V$). We claim that $\grp{f_*\Pi_1(\Ga, V)} = \grp{f_*\Pi_1(\S, V)}$. Indeed, any path on $\S$ between the points of $V$ gives a path on $\Ga$ between the corresponding points and having the same image, and conversely, any path in $\Ga$ between points of $V$ can be lifted to a path in $\S$ having the same image.

Applying Proposition \ref{prop:Gw} (and noting that $\Ga$ is connected since $\S$ is connected), we have that
$$
h_*\pi_1(\Ga_W, o) = \grp{f_*\Pi_1(\Ga, V), w} = \grp{f_*\Pi_1(\S, V), w}.
$$
Hence $\rk \pi_1(\Ga_W) \ge \rk h_*\pi_1(\Ga_W) = \rk \grp{f_*\Pi_1(\S, V), w}$. Since $\grp{f_*\Pi_1(\S, V), w}$ is an algebraic extension of $w$ by assumption, its rank is at least $\pi(w)$. Thus $\rk \pi_1(\Ga_W) \ge \pi(w)$.

Finally, to see that $w_*[S]$ is not a proper power in $\pi_1(\Ga_W)$, it is enough to show that $h_*w_*[S]$ is not a proper power in $h_*\pi_1(\Ga_W)$. But $h_*w_*[S] = w$ by definition, which is a non-power by assumption, so we are done.
\end{proof}

\subsection{Forbidden matchings}\label{subsec:forbidden}
In this section, we study the injectivity of the map $E_P \to E_\Ga\times E_S$. We show that if it is not injective, we can construct a new surface $\S'$ with $\chi(\S) \le \chi(\S')$ and the new map $E_P \to E_\Ga\times E_S$ being injective (so we can apply Proposition \ref{prop:woforbidden} for $\S'$ and get a bound on $\chi(\S)$).

We begin by analyzing how the map $E_P \to E_\Ga\times E_S$ can fail to be injective. Let $e_1$ and $e_2$ be two edges of $P$ mapping to the same edge in both $\Ga$ and $S$. Recall that edges of $\Ga$ correspond to the arcs and curves of $C$, while the edges of $P$ correspond to endpoints of the arcs of $C$. Furthermore, if an edge of $P$ is the endpoint of some arc of $C$, its image in $\Ga$ is the edge corresponding to this arc. Hence $e_1$ and $e_2$ can map to the same edge $e$ of $\Ga$ if and only if $e$ corresponds to an arc of $C$ and $e_1$, $e_2$ correspond to its endpoints. To understand the images of $e_1$ and $e_2$ in $S$, note first that any edge of $P$ is labelled by some edge of $\Om$ (and hence by a letter of $F_r = \pi_1(\Om)$) - indeed, by definition, each arc of $C$ is mapped to one of our chosen points $p_x$. In fact, each arc comes with an orientation (pulled back from $\Om$), so we can write either $x$ or $x^{-1}$ accordingly. By our construction, if we go along some boundary component of $\S$ and write down the letters corresponding to each arc-endpoint we encounter, we will get the word $w^p$ for some $p$ (or one of its cyclic shifts). In other words, each edge of $P$ is labelled by some $x$ or $x^{-1}$, and each circle of $P$ reads off as some $w^p$. The map to $S$ is obtained by dividing each circle of $P$ to segments representing $w$ and mapping each segment to $S$. Hence, two edges of $P$ map to the same edge of $S$ if and only if they have the same "location relative to $w$". Overall, $e_1$ and $e_2$ map to the same edges of $\Ga$ and $S$ if and only if they are the two endpoints of an arc, which correspond to the same location relative to $w$ in their respective boundary components. In fact, we obtain the following:
\begin{prop}
Let $\S, f, C, P, \Ga, S$ be as before. The arcs of $C$ induce a collection of matchings on the edges of $P$, or equivalently, a collection of matchings for the words $w^{m_1},...,w^{m_\ell}$. The map $E_P \to E_\Ga\times E_S$ is injective if and only if in the induced collection of matchings no letter is matched to another letter which corresponds to the same letter in $w$.
\end{prop}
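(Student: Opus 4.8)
The plan is to describe the two projections $E_P\to E_\Ga$ and $E_P\to E_S$ separately and then intersect the resulting ``collision'' conditions; this is mostly a bookkeeping translation of the discussion preceding the statement.

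\emph{Step 1: the induced collection of matchings.} First I would make precise the assertion that the arcs of $C$ induce a collection of matchings. Each arc $\al$ of $C$ is a connected component of $f^{-1}(p_x)$ for a unique generator $x=x_j$; call it an $x$-arc, and recall that after perturbing $f$ the set $C$ is a disjoint union of such arcs (endpoints on $\partial\S$, interiors disjoint from $\partial\S$) together with closed curves disjoint from $\partial\S$. The endpoints of $\al$ lie on $\partial\S$ exactly at the points where $f|_{\partial\S}$ crosses $p_x$; since $f|_{\partial\S}$ spells the reduced word $w^{m_i}$ on the $i$-th boundary component and is transverse to $p_x$, these crossing points are precisely the appearances of $x^{\pm1}$ among the $w^{m_i}$. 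Moreover, because $\S$ is oriented, a tubular neighbourhood of $\al$ is an honest rectangle $\al\times[-1,1]$ whose two sides map to the two sides of $p_x$ in the edge $x$; tracing the boundary orientation through the two short ends of this rectangle shows that one endpoint of $\al$ is crossed ``in the direction of $x$'' and the other ``against'' it, i.e.\ one endpoint is an appearance of $x$ and the other of $x^{-1}$. Hence the $x$-arcs of $C$ pair up the appearances of $x$ with the appearances of $x^{-1}$, that is, they realise a perfect $x$-matching $\s_x$; letting $x$ range over the generators we obtain a collection of matchings of $w^{m_1},\dots,w^{m_\ell}$ in the sense of Subsection~\ref{subsec:colmatch}. (Closed curves of $C$ contribute no edges of $P$.) Under the identification of $E_P$ with the set of all letter-appearances of the $w^{m_i}$, the matched pair coming from $\al$ is exactly the pair of edges of $P$ lying on $\al$.

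\emph{Step 2: the two collision conditions.} Next I would record, for distinct $e_1,e_2\in E_P$, when they agree under each projection. Edges of $\Ga$ correspond to the arcs and curves of $C$, and an edge of $P$ maps to the edge of $\Ga$ named by the unique arc of $C$ it lies on; hence $e_1,e_2$ have the same image in $E_\Ga$ iff they lie on a common arc $\al$, and since an arc has exactly two endpoints this forces $\{e_1,e_2\}$ to be precisely the endpoint pair of $\al$ --- i.e.\ $\{e_1,e_2\}$ is one of the matched pairs of Step~1. On the other side, the covering $\s\colon P\imto S$ subdivides the $i$-th circle of $P$ into $|m_i|$ arcs, each mapping onto the whole of $S$ (reversing orientation when $m_i<0$); thus a letter-appearance of $w^{m_i}$ maps to the edge of $S$ corresponding to that letter's slot in the cyclic word $w$. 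Therefore ``$e_1$ and $e_2$ have the same image in $E_S$'' is literally the statement that the two letter-appearances sit at the same position of $w$, i.e.\ that they \emph{correspond to the same letter in $w$}.

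\emph{Step 3: conclusion.} Combining the two steps, $E_P\to E_\Ga\times E_S$ fails to be injective iff there exist $e_1\neq e_2$ agreeing under both projections, iff there is an arc $\al$ of $C$ whose two endpoints correspond to the same letter of $w$, iff in the induced collection of matchings some letter is matched to a letter corresponding to the same letter of $w$. Negating this equivalence gives the proposition. The only step demanding genuine care is the orientation argument in Step~1 --- verifying that the two endpoints of an $x$-arc are an $x$ and an $x^{-1}$ rather than two appearances of the same letter; this is where orientability of $\S$ enters (equivalently, it is already forced by the fact that $(w^{m_1},\dots,w^{m_\ell})$ must be balanced for an admissible pair to exist). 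Everything else is unwinding the definitions of $\Ga$, $P$, $S$ and the maps between them.
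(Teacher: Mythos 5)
Your proposal is correct and follows essentially the same route as the paper's own argument: edges of $P$ agree in $E_\Ga$ exactly when they are the two endpoints of a common arc (i.e.\ a matched pair), and agree in $E_S$ exactly when they occupy the same position of $w$, so non-injectivity is precisely a forbidden matched pair. Your Step~1 orientation argument, showing each $x$-arc pairs an appearance of $x$ with one of $x^{-1}$, is a point the paper leaves implicit, and it is a correct and welcome addition rather than a different approach.
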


\begin{example}
Take $w=xy^2x$, and suppose the boundary consists of $xy^2x$ and $x^{-1}y^{-2}x^{-1}$. Then if our collection of arcs matches the first $x$ of the first boundary to the last $x^{-1}$ of the second boundary, the map $E_P \to E_\Ga\times E_S$ will not be injective, since they both correspond to the first $x$ of $w$.
\end{example}

We call a matching that fails to satisfy this property a \emph{forbidden} matching. That is, a matching is forbidden if it matches two corresponding letters of $w$. 

\begin{remark}
Our terminology follows \cite{Magee2021}, where the same property was studied in a similar but slightly different context. Indeed, it is interesting to note that while both this paper and \cite{Magee2021} arrive to the notion of forbidden matching from the study of word measures on $U(n)$, our work focuses on word measures defined by words in free groups, while the work of Magee focuses on word measures defined by words in surface groups.
\end{remark}

We now explain how to simplify pairs with forbidden matchings:

\begin{prop}\label{prop:simplify}
Let $(\S, f)$ be an admissible pair as in Proposition \ref{prop:chiS}, with a chosen underlying system of arcs and curves $C$. If the collection of matchings induced by $C$ is forbidden, then one can construct some $(\S', f')$ such that: 
\begin{enumerate}
    \item $(\S', f')$ satisfies the conditions of Proposition \ref{prop:chiS}.
    \item $\S'$ has strictly less boundary components than $\S$.
    \item $\chi(\S) < \chi(\S')$.
\end{enumerate}
\end{prop}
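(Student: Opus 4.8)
The plan is to take $(\S',f')$ to be $\S$ cut along a single forbidden arc. First I would pin down that arc: if the matching induced by $C$ is forbidden, some arc $\gamma\in C$ matches an occurrence of a letter $w_k$ of $w$ with an occurrence of $w_k^{-1}$ occupying the same position of $w$; since these have opposite signs they cannot lie in the same boundary word, so $\gamma$ joins two \emph{distinct} boundary components, $\partial_i$ (inside $w^{m_i}$ with $m_i>0$) and $\partial_j$ (inside $w^{m_j}$ with $m_j<0$). Let $\S'$ be $\S$ cut along this properly embedded arc $\gamma$. Then $\S'$ is connected, $\partial_i$ and $\partial_j$ are replaced by a single boundary component, so $\S'$ has $\ell-1$ boundary components (item 2), and cutting along a properly embedded arc raises the Euler characteristic by $1$, so $\chi(\S)<\chi(\S')$ (item 3). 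As $\gamma\subseteq f^{-1}(p_x)$, the map $f$ descends to $f':\S'\to\Om$.

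The heart of the argument, and where the forbidden hypothesis is used, is the following bookkeeping. Write $e_1=\gamma\cap\partial_i$, $e_2=\gamma\cap\partial_j$, and $t=w_1\cdots w_k$ for the prefix of $w$ up to the matched position. One computes that the $f_*$-images of the boundary paths $v_i\to e_1$ along $\partial_i$ and $v_j\to e_2$ along $\partial_j$ are $w^{a-1}t$ and $w^{-b'}t$ for suitable integers $a,b'$ and the \emph{same} prefix $t$. Two consequences follow: (i) the new (merged) boundary component of $\S'$, based at the surviving point $v_i$, maps to $w^{a-1}t\cdot t^{-1}w^{m_j}t\cdot t^{-1}w^{m_i-a+1}=w^{m_i+m_j}$, so $(\S',f')$ with marked set $V'=V\setminus\{v_j\}$ is an admissible pair for the $\ell-1$ words $w^{m_i+m_j},(w^{m_t})_{t\neq i,j}$ (a zero exponent being permitted in this section); and (ii) the path in $\S$ from $v_i$ to $v_j$ running along $\partial_i$ to $e_1$, across $\gamma$, and back along $\partial_j$ to $v_j$ has $f_*$-image $w^{a-1}t\cdot t^{-1}w^{b'}=w^{a-1+b'}$, a power of $w$ --- the conjugating prefix $t$ cancels precisely because the two ends of $\gamma$ sit at the same position of $w$.

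To verify item 1 --- that $(\S',f')$ still satisfies the hypotheses of Proposition \ref{prop:chiS} --- it suffices to establish the equality $\grp{f'_*\Pi_1(\S',V'),w}=\grp{f_*\Pi_1(\S,V),w}$, since then non-cyclicity and being an algebraic extension of $\grp{w}$ are inherited from $(\S,f)$. Let $q:\S'\to\S$ be the quotient re-gluing $\gamma$; then $f'=f\circ q$ and $q$ fixes $V'=V\setminus\{v_j\}$ pointwise, giving at once $\grp{f'_*\Pi_1(\S',V')}\le\grp{f_*\Pi_1(\S,V\setminus\{v_j\})}\le\grp{f_*\Pi_1(\S,V)}$. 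For the reverse inclusion, $\grp{f_*\Pi_1(\S,V)}$ is generated by $\grp{f_*\Pi_1(\S,V\setminus\{v_j\})}$ together with one connecting path $v_i\to v_j$, which by (ii) may be chosen to have $f_*$-image a power of $w$ and is therefore redundant once $w$ is adjoined; and $q$ descends to a $\pi_1$-surjection $\S'/(V\setminus\{v_j\})\to\S/(V\setminus\{v_j\})$ (gluing two boundary arcs of a surface is $\pi_1$-surjective, and the further point-identification only adds connecting generators that lift), so $\grp{f_*\Pi_1(\S,V\setminus\{v_j\})}=\grp{f'_*\Pi_1(\S',V')}$. Combining these gives the equality.

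I expect the main obstacle to be the computations of the second paragraph and the degenerate cases they conceal: locating the arc-endpoints relative to $w$ accurately enough that the cancellations in (i) and (ii) are exact, handling the case $m_i+m_j=0$ where the merged boundary becomes null-homotopic, and checking that low-complexity outputs (an annular or disc $\S'$) cannot occur --- for these, the equality $\grp{f'_*\Pi_1(\S',V'),w}=\grp{f_*\Pi_1(\S,V),w}$ would force $\grp{f_*\Pi_1(\S,V),w}$ to be cyclic or to contain $w$ as a primitive element, contradicting that $(\S,f)$ satisfies the hypotheses of Proposition \ref{prop:chiS}. Once these points are settled, items 1--3 hold and the proposition follows.
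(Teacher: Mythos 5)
Your construction is the same as the paper's (cut $\S$ along a single forbidden arc, observe that the two endpoints lie on distinct boundary components, that the merged boundary still maps to a power of $w$, that a connecting path $v_i\to v_j$ through the arc maps to a power of $w$, and that $\chi$ goes up by $1$ while the number of boundary components goes down by $1$). However, the step where you verify item 1 contains a genuine error. You claim that the regluing map $q$ induces a $\pi_1$-surjection $\S'/(V\setminus\{v_j\})\to \S/(V\setminus\{v_j\})$ because ``gluing two boundary arcs of a surface is $\pi_1$-surjective,'' and you deduce the equality $\grp{f_*\Pi_1(\S,V\setminus\{v_j\})}=\grp{f'_*\Pi_1(\S',V')}$. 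This is false: identifying two disjoint boundary arcs of a connected surface gives $\pi_1(\S)\cong\pi_1(\S')*\ZZ$, with the new $\ZZ$ generated by a loop crossing the identified arc exactly once (the simplest instance is gluing two boundary arcs of a disk to get an annulus, where $1\to\ZZ$ is not onto). Concretely, in the situation at hand the element $\grp{f_*\Pi_1(\S,V')}$ can be strictly larger than $\grp{f'_*\Pi_1(\S',V')}$: the discrepancy comes precisely from paths in $\S$ between points of $V'$ that cross the arc $a$, and these do not lift to $\S'$.

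What is true --- and what the paper proves --- is the weaker statement you actually need: after adjoining $w$, the two groups agree. The missing argument is a rerouting step: a path that crosses $a$ can be homotoped so that, instead of crossing, it first travels around the boundary component $\partial_i$ (equivalently, one multiplies by the boundary loop $[\partial_i]$, whose $f_*$-image is a power of $w$) and then continues on the other side of $a$; iterating over all crossings expresses any element of $\grp{f_*\Pi_1(\S,V')}$ in terms of $\grp{f'_*\Pi_1(\S',V')}$ together with $f_*$-images of boundary loops and of the connecting path, all of which are powers of $w$. This yields $\grp{f_*\Pi_1(\S,V),w}=\grp{f'_*\Pi_1(\S',V'),w}$, which is exactly the equality your argument needs, but it cannot be obtained from the surjectivity claim as you stated it. Aside from this gap (and with it repaired as above), your treatment of the degenerate cases and of items 2 and 3 is correct and coincides with the paper's proof.
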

Proposition \ref{prop:chiS} follows from this proposition (up to Proposition \ref{prop:Gw}, which will be proved in the next section): 
\begin{proof}[Proof of Proposition \ref{prop:chiS}, assuming Proposition \ref{prop:simplify}]
If $E_P \to E_\Ga\times E_S$ is injective, we are done by Proposition \ref{prop:woforbidden}. Otherwise, we can apply Proposition \ref{prop:simplify} and replace $\S$ by some suitable $\S'$. Continue in the same fashion as long as the collection of matchings we have is forbidden. Since the number of boundary components strictly decreases, the process eventually terminates and we are left with some new $(\S_0, f_0)$, with an arc and curve system $C_0$ which induces a non-forbidden collection of matchings. Hence the map $E_P\to E_\Ga\times E_S$ (where $P$, $\Ga$, and $S$ are the ones constructed from $\S_0$) is injective. Furthermore, by our construction, $\chi(\S) < \chi(\S_0)$. By Proposition \ref{prop:woforbidden}, we must have $\chi(\S_0) \le 1 - \pi(w)$. Thus $\chi(\S) < 1 - \pi(w)$ and we are done.
\end{proof}

\begin{proof}[Proof of Proposition \ref{prop:simplify}]
Fix an admissible pair $(\S, f)$ satisfying the assumptions of Proposition \ref{prop:chiS}, with a chosen underlying systems of arcs and curves $C$ which induces a forbidden matchings. In particular, $(\S, f)$ is an admissible pair for $w^{m_1},...,w^{m_\ell}$ (for some integers $m_i$), $\S$ is connected, and $\grp{f_*\Pi_1(\S, V), w}$ is a non-cyclic algebraic extension of $\grp{w}$.

By assumption, the underlying system of arcs and curves $C$ defines a forbidden matching of $w^{m_1},\dots, w^{m_\ell}$. The letters of the words $w^{m_i}$ can be identified with edges of $P$, and so correspond to endpoints of arcs of $C$, as explained in the beginning of this section. Hence we have some arc $a$ whose two endpoints have the same location relative to $w$ in their respective boundaries. Denote those boundaries by $\partial_1$, $\partial_2$ (they must be different, since corresponding edges on the same boundary cannot be matched as they have the same orientation). 

Consider the surface $\S'$ obtained by cutting $\S$ along $a$. This reduces the number of boundary components of $\S$ by $1$, since $\partial_1$ and $\partial_2$ are now joined together and form a new boundary component $\partial_{12}$. Since the endpoints of $a$ had the same image relative to $w$, the image $f_*[\partial_{12}]$ is still a power of $w$, hence $\S'$ is an admissible surface for some collection of powers of $w$. Clearly $\S'$ is connected, since $\S$ was connected. Furthermore, we have $\chi(\S') = \chi(\S) + 1 > \chi(\S)$. It remains to check that $\grp{f_*\Pi_1(\S', V'), w}$ is a non-cyclic algebraic extension of $w$. For this, we first need to explain how $V'$ is defined. Note that when joining $\partial_1$ and $\partial_2$ together to form $\partial_{12}$, we naturally obtain two basepoints on $\partial_{12}$. Hence in $V'$ we will just drop one of them. For any path $\gamma_{12}$ between $v_1$ and $v_2$, we have that $\grp{f_*\Pi_1(\S', V)} = \grp{f_*\Pi_1(\S', V'), f_*[\gamma_{12}]}$. Indeed, it does matter any path we take since the difference between any two paths will be a loop around our existing basepoint in $V'$. In particular we can choose the path that goes along $\partial_{12}$. The image of this path is a power of $w$ (again, since the arc $a$ defined a forbidden matching), so $\grp{f_*\Pi_1(\S', V), w} = \grp{f_*\Pi_1(\S', V'), w}$. However, there is also a difference between $\Pi_1(\S, V)$ and $\Pi_1(\S', V)$ - any path in $\S$ that crossed the arc $a$ cannot be defined in $\S'$. Let $\gamma$ be such a path. We may decompose it to a path $\gamma'$ that goes until $v_1$ (or $v_2$) without crossing $a$, and a path $\gamma''$ that goes from $v_1$ and crosses $a$ immediately. Consider the path $\partial_1 \gamma''$ - it is homotopic to the path that goes around $\partial_1$ from the other side, instead of crossing $a$. Hence this path also exists on $\S'$ (if it crosses $a$ again in a later point we can repeat this process). In other words, we see that we can replace paths in $\Pi_1(\S, V)$ that cross $a$ by paths in $\Pi_1(\S', V) * \grp{\partial_1}$. All in all, we get that
$$
\grp{f_*\Pi_1(\S, V)} = \grp{f_*\Pi_1(\S', V'), f_*[v_1 v_2], f_*[\partial_1]}
$$
so $\grp{f_*\Pi_1(\S', V'), w}=\grp{f_*\Pi_1(\S, V), w}$. In particular, $\grp{f_*\Pi_1(\S', V'), w}$ is a non-cyclic algebraic extension of $w$.
\end{proof}

Thus Proposition \ref{prop:simplify} is proved and we can eliminate forbidden matchings. Hence, as explained, Proposition \ref{prop:chiS} is proved.

\subsection{The coefficient of $n^{1-\pi(w)}$}\label{subsec:coeff}

So far, we proved Proposition \ref{prop:chiS}. As a consequence, we showed that for $T = \xi_{m_1}\cdots \xi_{m_\ell}$ (where $m_i\neq 0$ for any $i$),
$$
\EE_w[T] = \grp{T, 1} + C(T, w)\cdot n^{1-\pi(w)} + O(n^{-\pi(w)})
$$
for some $C(T, w)$. To prove Theorem \ref{thm:EwTpi}, we have to show that $C(T, w) = \grp{T, \tr + \overline{\tr}} \cdot |\CommCrit(w)|$. We do so by showing the following two propositions:
\begin{prop}\label{prop:CTw}
If $\pi(w) = 2\cl(w)$ then $C(T, w) = \grp{T, \tr+\overline{\tr}} \cdot C(w)$, where $C(w)$ is the coefficient of $n^{1-2\cl(w)}$ in $\EE_w[\tr]$. 
\end{prop}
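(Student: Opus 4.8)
The plan is to combine the structural description of the contributing pairs established right after Proposition \ref{prop:chiS} with a reduction of the ``annular part'' to the primitive‑word case. Recall from that discussion that, since $\pi(w)=2\cl(w)$, the admissible pairs $(\S,f)$ contributing to the coefficient of $n^{1-\pi(w)}$ in $\EE_w[T]$ are exactly those for which $\S$ has a unique component $\S_0$ of nonzero Euler characteristic, this $\S_0$ being an admissible pair $(\S_0,f_0)$ for $w^{\epsilon_0}$ with $\epsilon_0\in\{+1,-1\}$ and $\chi(\S_0)=1-2\cl(w)$ (so $\S_0$ has one boundary and genus $\cl(w)$), while every other component of $\S$ is an annulus --- indeed a connected oriented surface with nonempty boundary and $\chi=0$ must be an annulus, and no disk components occur because all boundary words are nontrivial. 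Since $\chi^{(2)}$ is multiplicative over connected components (\cite[Formula 7.14]{Luck2002}), writing $i_0$ for the index of the boundary of $\S$ carried by $\S_0$ (so $m_{i_0}=\epsilon_0$), such a configuration is determined by the triple $\bigl(i_0,(\S_0,f_0),(\S',f')\bigr)$, where $(\S',f')$ is an admissible pair for $(w^{m_j})_{j\ne i_0}$ consisting solely of annuli, and the contribution multiplies. Hence
\[
C(T,w)=\sum_{\epsilon_0\in\{+1,-1\}}\ \sum_{i_0:\,m_{i_0}=\epsilon_0}\ \Bigl(\sum_{(\S_0,f_0)}\chi^{(2)}(\MCG(f_0))\Bigr)\cdot A\bigl((w^{m_j})_{j\ne i_0}\bigr),
\]
where the inner $\S_0$‑sum runs over admissible pairs for $w^{\epsilon_0}$ of genus $\cl(w)$, and $A(\cdot)$ denotes the $\chi^{(2)}$‑weighted number of admissible pairs that are disjoint unions of annuli.

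First I would identify the $\S_0$‑sums. For $\epsilon_0=+1$ it is by definition the coefficient of $n^{1-2\cl(w)}$ in $\EE_w[\tr]$, namely $C(w)$ (this is the top‑degree term by \cite{Culler}, and admissible pairs for the single word $w$ are connected). For $\epsilon_0=-1$, reversing the orientation of the surface gives a $\chi^{(2)}$‑preserving bijection between admissible pairs for $w^{-1}$ and for $w^{+1}$ (it flips the induced boundary orientation and conjugates the mapping‑class‑group action), so this sum is again $C(w)$; equivalently, $\mu_w$ is invariant under complex conjugation on $U(n)$, whence $\EE_w[\xi_{-1}]=\EE_w[\xi_1]$. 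Thus it remains to prove $\sum_{i_0:\,m_{i_0}=1}A\bigl((w^{m_j})_{j\ne i_0}\bigr)=\grp{T,\tr}$ and the symmetric statement for $\overline{\tr}$.

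Next I would compute $A(\cdot)$ via the primitive case. Each annulus $A_\gamma$ appearing has $\grp{f_*\Pi_1(A_\gamma,V_{A_\gamma})}$ generated by the image of its core (a nonzero power of $w$) and the image of a connecting arc (which centralizes that power, hence lies in $\grp w$ as $w$ is a non‑power), so any annuli‑only admissible pair is a ``trivial'' pair in the sense of Corollary \ref{cor:EwT1}. By the bijection of Corollary \ref{cor:EwT1}/Lemma \ref{lem:mapsto}, $A\bigl((w^{n_1},\dots,w^{n_k})\bigr)=A\bigl((x^{n_1},\dots,x^{n_k})\bigr)$ for a primitive $x$. On the other hand, any admissible pair with $\chi(\S)=0$ is a disjoint union of annuli, so $A\bigl((x^{n_1},\dots,x^{n_k})\bigr)$ is precisely the $n^0$‑coefficient of the Magee--Puder expansion of $\EE_x[\xi_{n_1}\cdots\xi_{n_k}]$, which equals $\grp{\xi_{n_1}\cdots\xi_{n_k},1}$ since $\EE_x$ is the uniform (constant) expectation. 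Applying this to $\grp{T,\xi_1}=\grp{\xi_{m_1}\cdots\xi_{m_\ell}\,\xi_{-1},1}=A\bigl((x^{m_1},\dots,x^{m_\ell},x^{-1})\bigr)$ and peeling off the annular component containing the boundary labelled $x^{-1}$ --- this component must pair $x^{-1}$ with some boundary $x^{m_{i_0}}$, forcing $m_{i_0}=1$, and by multiplicativity it contributes a factor $\grp{\xi_1\xi_{-1},1}=1$ --- gives
\[
\grp{T,\xi_1}=\sum_{i_0:\,m_{i_0}=1}A\bigl((x^{m_j})_{j\ne i_0}\bigr)=\sum_{i_0:\,m_{i_0}=1}A\bigl((w^{m_j})_{j\ne i_0}\bigr),
\]
and symmetrically $\grp{T,\xi_{-1}}=\sum_{i_0:\,m_{i_0}=-1}A\bigl((w^{m_j})_{j\ne i_0}\bigr)$. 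Substituting everything back yields $C(T,w)=C(w)\cdot(\grp{T,\tr}+\grp{T,\overline{\tr}})=C(w)\cdot\grp{T,\tr+\overline{\tr}}$, as claimed.

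I expect the main obstacle to be the careful bookkeeping in the last two steps: checking that the decomposition $(\S,f)\leftrightarrow\bigl(i_0,(\S_0,f_0),(\S',f')\bigr)$ is a genuine bijection (in particular that every genus‑$\cl(w)$ admissible pair for $w^{\pm1}$ arises, equivalently satisfies the hypotheses of Proposition \ref{prop:chiS}; one sees this because such a pair presents $w$ as a product of $\cl(w)$ commutators inside a free group of rank $2\cl(w)$, so the image of $\pi_1$ is a rank‑$2\cl(w)$ --- hence commutator‑critical‑type --- algebraic extension of $w$), verifying that ``peeling off an annulus'' is well defined and multiplicative at the level of $\chi^{(2)}$, and making sure all annuli involved have nonzero boundary words so the centralizer argument identifying them as trivial pairs applies. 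An alternative to the peeling step is a direct combinatorial count: $A\bigl((w^{n_1},\dots,w^{n_k})\bigr)$ is a sum over pairings of the boundary words into blocks $\{w^c,w^{-c}\}$, each block contributing the $|c|$ equivalence classes of admissible annuli wrapping $w^{\pm c}$ (the $\ZZ/c\ZZ$ orbits of the Dehn‑twist action, each with trivial stabilizer, so $\chi^{(2)}=1$); matching this against the Diaconis--Shahshahani formula for $\grp{T,\tr}$ reproduces the identity, the factors $|c|$ being exactly what makes the two sides agree.
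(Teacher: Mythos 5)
Your argument is correct, and its skeleton is the paper's: isolate the unique genus-$\cl(w)$ component $\S_0$ admissible for $w^{\pm1}$, use multiplicativity of $\chi^{(2)}$ over components, identify the $\S_0$-sum with $C(w)$, and identify the annular factor with $\grp{T,\tr}$ resp. $\grp{T,\overline{\tr}}$. Where you genuinely diverge is in the last identification. The paper enumerates directly: it shows each annulus stabilizer is trivial (Dehn-twist argument), that there are exactly $p$ classes of annuli with boundaries $(w^p,w^{-p})$, counts the partitions ($a_1!a_2!\cdots$ with $a_p=b_p$), and matches the resulting product $\prod_p \delta_{a_pb_p}a_p!\,p^{a_p}$ against the explicit Diaconis--Shahshahani moment formula. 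You instead observe (via the centralizer of $w^p$ being $\grp{w}$ for a non-power $w$) that annuli-only pairs are exactly the ``trivial'' pairs of Corollary \ref{cor:EwT1}, transfer the weighted count $A(\cdot)$ to the primitive word $x$, identify it with the stable inner product against $1$, and evaluate $\grp{T,\tr}$ by peeling off the annulus carrying the auxiliary $x^{\mp1}$ boundary; this needs only $\grp{\xi_1\xi_{-1},1}=1$ (orthonormality of $\tr$) rather than the full Diaconis--Shahshahani formula, and it sidesteps both the triviality of the annulus stabilizers and the count of $p$ classes per pair $(w^p,w^{-p})$, which get absorbed into $A(\cdot)$. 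Your handling of the $w^{-1}$ case by an orientation-reversing bijection (or conjugation-invariance of $\mu_w$) replaces the paper's ``completely analogous'' repetition, and your parenthetical that a genus-$\cl(w)$ admissible pair for $w^{\pm1}$ has non-cyclic (indeed commutator-critical) image is exactly the point needed so that all such configurations are counted in $C(T,w)$ and none were already absorbed into $\grp{T,1}$. The remaining bookkeeping you flag (the bijectivity of $(\S,f)\leftrightarrow(i_0,(\S_0,f_0),(\S',f'))$ on equivalence classes, multiplicativity of $A$ under disjoint union, and that all boundary words involved are nontrivial) is routine and holds as you expect.
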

\begin{prop}\label{prop:Cw}
If $\pi(w) = 2\cl(w)$ then the coefficient of $n^{1-2\cl(w)}$ in $\EE_w[\tr]$ is $|\CommCrit(w)|$.
\end{prop}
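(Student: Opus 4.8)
The plan is to read off the coefficient of $n^{1-2\cl(w)}$ from the Magee--Puder expansion (Theorem~\ref{thm:recap}) of $\EE_w[\tr]$, group the contributing admissible pairs according to the subgroup of $F_r$ they generate, and show that each subgroup that appears lies in $\CommCrit(w)$ and that its ``bucket'' contributes exactly $1$. Write $g=\cl(w)$; we may assume $w\neq 1$, so $g\ge 1$. Since $\tr=\xi_1$ has a single factor, every admissible pair $(\S,f)$ for $w$ has $\S$ connected with one boundary component, so $\chi(\S)=1-2g$ holds precisely when $\S$ is the genus-$g$ surface with one boundary component, which we denote $\S_{g,1}$. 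Thus the coefficient in question is $\sum_{[(\S_{g,1},f)]}\chi^{(2)}(\MCG(f))$, the sum running over equivalence classes of admissible pairs for $w$ with underlying surface $\S_{g,1}$.

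First I would pin down which such pairs contribute. Let $(\S_{g,1},f)$ be an admissible pair for $w$ with $\chi^{(2)}(\MCG(f))\neq 0$, and set $H=f_*\pi_1(\S_{g,1})=\grp{f_*\Pi_1(\S_{g,1},V)}$. By Theorem~\ref{thm:needalg}, $H$ is an algebraic extension of $\grp{w}$. It cannot be cyclic: otherwise $H=\grp{w}$ (as $w\in H$ is a non-power), so by Lemma~\ref{lem:mapsto} the map $f$ would factor through the circle $S^1\xrightarrow{w}\Om$, forcing the boundary loop $\prod_i[a_i,b_i]$ of $\S_{g,1}$ to map to a generator of $\pi_1(S^1)=\ZZ$ --- impossible, since a product of $g\ge 1$ commutators maps to $0$ there. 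Hence $H$ is non-cyclic, so by Claim~\ref{clm:Hw} (or Lemma~\ref{lem:lemma2}) $\rk H\ge\pi(w)=2g$, while $\rk H\le\rk\pi_1(\S_{g,1})=2g$; thus $\rk H=2g$, the surjection $f_*\colon\pi_1(\S_{g,1})\to H$ between free groups of equal rank is an isomorphism, and the generating set $(f_*a_i,f_*b_i)$ of $H$ --- of size $2g=\rk H$, hence a basis --- exhibits $w=\prod_i[f_*a_i,f_*b_i]$ as a standard surface word, so $H\in\CommCrit(w)$. This defines a map $\Phi\colon\{\text{contributing classes}\}\to\CommCrit(w)$, and $\Phi$ is onto: for $H\in\CommCrit(w)$ with standard surface basis $(h_i,k_i)$, the map $f$ determined by $f_*a_i=h_i$, $f_*b_i=k_i$ is admissible for $w$ and satisfies $f_*\pi_1(\S_{g,1})=H$.

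It then remains to prove $\sum_{[(\S,f)]\in\Phi^{-1}(H)}\chi^{(2)}(\MCG(f))=1$ for each fixed $H\in\CommCrit(w)$; this is the heart of the argument. Using Lemma~\ref{lem:mapsto} --- together with the homotopy-lifting argument in its proof, to check that the equivalence of admissible pairs (and the stabilizer $\MCG(f)$) is unchanged when $\Om$ is replaced by the rose $\Om'$ with $\pi_1(\Om')=H$ --- I would identify $\Phi^{-1}(H)$, with its $\chi^{(2)}(\MCG(\cdot))$-weights, with the set of equivalence classes of admissible pairs $(\S_{g,1},g)$ for the standard surface word $\partial_H:=\prod_i[h_i,k_i]$ in $H\cong F_{2g}$ for which $g_*$ is an isomorphism. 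On the other hand, Theorem~\ref{thm:recap} applied in $H$ shows that the coefficient of $n^{1-2g}$ in $\EE_{\partial_H,U(n)}[\tr]$ equals the sum of $\chi^{(2)}(\MCG(g))$ over \emph{all} admissible pairs for $\partial_H$ with $\chi=1-2g$; and the dichotomy of the previous paragraph applies here verbatim --- using that the primitivity rank of $\partial_H$ inside $H$ is again $2g$ (any subgroup of $H$ is a subgroup of $F_r$, so this rank is $\ge\pi(w)=2g$, while $\le 2g$ is clear since $\partial_H$ is a product of $g\ge1$ commutators in $H$) --- so every contributing pair for $\partial_H$ already has $g_*$ an isomorphism. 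Hence that coefficient equals the desired sum over $\Phi^{-1}(H)$. Finally, the Frobenius integration formula $\int_{U(n)^{2g}}\tr\big(\prod_{i=1}^g[A_i,B_i]\big)=n^{1-2g}$ --- obtained by iterating the standard identities $\int_G\chi_\rho(AxA^{-1}y)\,dA=\chi_\rho(x)\chi_\rho(y)/\dim\rho$ and $\chi_\rho*\chi_\rho=\chi_\rho/\dim\rho$ for irreducible $\rho$ --- gives $\EE_{\partial_H}[\tr]=n^{1-2g}$ exactly, so its coefficient of $n^{1-2g}$ is $1$. Summing $\Phi$ over its fibres then yields that the coefficient of $n^{1-2\cl(w)}$ in $\EE_w[\tr]$ is $\sum_{H\in\CommCrit(w)}1=|\CommCrit(w)|$.

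The step I expect to be the main obstacle is the one just sketched: setting up the weight-preserving bijection between $\Phi^{-1}(H)$ and the isomorphism-type admissible pairs for $\partial_H$ --- in particular verifying that the passage from $\Om$ to $\Om'$ changes neither the equivalence classes nor the mapping-class-group stabilizers --- and establishing that the per-fibre sum equals \emph{exactly} $1$ rather than merely a nonzero constant, for which one genuinely needs the exact value of $\EE_{\partial_H}[\tr]$ supplied by the Frobenius formula, not just the order-of-magnitude bound of \cite[Corollary 1.8]{PuderMageeUn}.
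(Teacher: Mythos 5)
Your route is genuinely different from the paper's: the paper computes the coefficient by a direct weight-preserving bijection between equivalence classes of genus-$g$ admissible pairs for $w$ and $\CommCrit(w)$ --- each class contributes exactly $1$ because $f_*$ is injective, so $\MCG(f)$ is trivial by \cite[Lemma 5.1]{PuderMageeUn}, and distinct classes give distinct subgroups by the Dehn--Nielsen--Baer theorem combined with Proposition \ref{prop:fstar} --- and it never needs the Frobenius integration formula. Your plan instead fibres the contributing classes over the image subgroup $H$, transports each fibre to the rose $\Om'$ with $\pi_1(\Om')\cong H$ via Lemma \ref{lem:mapsto}, and evaluates each fibre by the exact identity $\EE_{\partial_H}[\tr]=n^{1-2g}$. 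However, there is a genuine gap at the step ``the dichotomy of the previous paragraph applies here verbatim, so every contributing pair for $\partial_H$ already has $g_*$ an isomorphism.'' The dichotomy only shows that the image $K:=g_*\pi_1(\S_{g,1})$ is a rank-$2g$ subgroup of $H$ containing $\partial_H$ as a standard surface word over a basis of $K$; it does not show $K=H$ (just as, over $\Om$, it did not show the image is all of $F_r$). What you actually need is that the standard surface word of genus $g$ cannot be a standard surface word over a basis of a \emph{proper} rank-$2g$ subgroup of $F_{2g}$. This is true but nontrivial --- for $g=1$ it is Nielsen's theorem that $[u,v]=[a,b]$ forces $\grp{u,v}=\grp{a,b}$, and for higher genus it is due to Zieschang --- and it is doing real work in your argument: without it, the Frobenius coefficient $1$ accounts for the pairs over \emph{all} such subgroups $K\le H$, not only for the single fibre over $H$, so the per-fibre value $1$ does not follow.

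The gap can be closed inside your framework, but only by adding the ingredient the paper's proof is built on: for every genus-$g$ admissible pair for $\partial_H$ over $\Om'$, the image has rank $2g=\rk\pi_1(\S_{g,1})$, so $g_*$ is injective, $\MCG(g)$ is trivial by \cite[Lemma 5.1]{PuderMageeUn}, and each equivalence class contributes exactly $+1$; the Frobenius value $1$ then forces there to be \emph{exactly one} such class, and since your explicit construction exhibits one with image $H$, that unique class has image $H$, which gives the fibre-sum $1$ (and surjectivity of $\Phi$ for free). Alternatively you may quote the Nielsen--Zieschang result directly. Either way the justification must be supplied; as it stands, the claimed ``verbatim'' reduction is not valid, whereas the paper's bijection argument sidesteps both the Frobenius formula and this classical input at the cost of invoking Dehn--Nielsen--Baer.
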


\begin{proof}[Proof of Proposition \ref{prop:CTw}]
According to our discussion following Proposition \ref{prop:chiS}, $C(T, w)$ is the total contribution of admissible pairs $(\S, f)$ where $\S$ has one component $\S_0$ such that $(\S_0, f|_{\S_0})$ is admissible for $w^{\pm 1}$ and $\S_0$ is of genus $\cl(w)$, and all other components have $\chi = 0$. As the only surface with boundary with $\chi = 0$ is the annulus, all other components of $\S$ must be annuli. In particular, each such component has exactly two boundary components, which must map to $w^p$, $w^{-p}$ for some $p$ (the boundaries of an annulus are homotopic, so they should have the same image, but we read them with opposite orientations).

Let $(\S, f)$ be an admissible pair contributing to $C(T, w)$. Then its contribution is  $\chi^{(2)}(\MCG(f))$. Write $\S = \S_0 \sqcup \bigsqcup_i \S_i$, where $\S_0$ is the surface with boundary $w^{\pm 1}$ and $\S_i$ are the annuli. We have that
$$
\MCG_\S(f) = \MCG_{\S_0}(f|_{\S_0}) \times \prod_i \MCG_{\S_i}(f|_{\S_i}).
$$
However, the mapping class group of an annulus $\S_i$ is isomorphic to $\ZZ$, and it is generated by Dehn twists. It is easy to see that Dehn twists cannot fix $f|_{\S_i}$, which is map factoring through the projection $S^1\times I \to S^1$ (for example the arc systems of $f$ and $\phi(f)$ for any nontrivial $\phi\in\MCG(\S)$ are not isotopic). Hence $\MCG_{\S_i}(f|_{\S_i}) = 1$, and so $\MCG_\S(f) = \MCG_{\S_0}(f|_{\S_0})$.

Write $C(T, w) = C_+(T, w)+C_-(T, w)$, where $C_+(T,w)$ consists of the contributions of those pairs where the non-annulus component has $w$ as its boundary, while $C_-(T, w)$ corresponds to a $w^{-1}$ boundary. We will show that $C_\pm(T, w) = \grp{T, \xi_{\pm 1}} \cdot C(w)$. We focus only on $C_+(T, w)$, as the argument for $w^{-1}$ is completely analogous (and in fact follows by replacing $w$ by $w^{-1}$).

Every admissible pair $(\S, f)$ contributing to $C_+(T,w)$ induces a partition of $w^{m_1},\dots, w^{m_\ell}$ into a collection of pairs $(w^p, w^{-p})$ and a single $w$. Fixing such a partition, what is the total contribution of admissible pairs inducing it? For each $(w^p, w^{-p})$, there are exactly $p$ equivalence classes of annuli with those boundaries (corresponding to the offset between the marked points, see also \cite[Corollary 1.13]{PuderMageeUn}). As for the $w$ component, we go over all admissible pairs for $w$ of genus $\cl(w)$. As each contributes $\chi^{(2)}_\S(f) = \chi^{(2)}_{\S_0}(f|_{\S_0})$, their total contribution (fixing a choice of annuli) is exactly the coefficient $C(w)$ of $n^{1-2\cl(w)}$ in $\EE_w[\tr]$. Hence, the total contribution of pairs inducing a given partition is $\sqrt{\prod_i |m_i|}\cdot C(w)$. In particular, it is independent of the partition.

We now count how many possible partitions there are. Let $a_p$ be the number of the $m_i$ which are equal to $p$. For there to be a partition, we must have $a_p = a_{-p}$ for all $p\neq \pm 1$ and $a_1 = a_{-1} + 1$. The total number of partitions in this case is $a_1! a_2! a_3! \cdots$ (note that it is $a_1!$ since we choose which $w$ is singled out). If we set $b_p = a_{-p}$ for $p > 1$ and $b_1 = a_{-1} + 1$, we obtain that there are no possible partitions unless $a_p = b_p$ for all $p > 0$. We can also write $\sqrt{\prod_i |m_i|} = \prod_{p > 0} p^{a_p}$. Combining our results, we obtain that
$$
C_+(T, w) = \left(\prod_{p > 0} \delta_{a_p b_p} a_p!p^{a_p}\right) \cdot C(w).
$$
On the other hand, \cite[Theorem 2]{DiaconisShershahani} states that for large enough $n$,
$$
\int_{U(n)} \xi_1^{a_1}\xi_{-1}^{b_1} \cdots \xi_k^{a_2}\xi_{-2}^{b_k}\cdots = \prod_{p > 0}\delta_{a_p b_p} a_p! p^{a_p}.
$$
As $\grp{T, \tr} = \int T\cdot\overline{\tr}$, we see that $\xi_{-1}=\overline{\tr}$ appears $a_{-1} + 1$ times in the integrand. Hence the equations coincide, and 
$$
C_+(T, w) = \grp{T, \tr} \cdot C(w).
$$
The argument for $C_-(T, w)$ is completely analogous, as mentioned before.
\end{proof}
\begin{proof}[Proof of Proposition \ref{prop:Cw}]
According to Theorem \ref{thm:recap}, 
$$
\EE_w[\tr] = \sum_{(\S, f)} \chi^{(2)}(\MCG(f)) n^{\chi(\S)}
$$
where the sum goes over all equivalence classes of admissible pairs for $w$. The pairs contributing to the coefficient of $n^{1-2\cl(w)}$ are exactly the ones where $\S$ is of genus $\cl(w)$. We claim that such pairs are in bijection with commutator critical subgroups, and that the coefficient of each pair is exactly $1$.

Let $(\S, f)$ be an admissible pair for $w$, where $\S$ is of genus $g=\cl(w)$. Since $\S$ is a surface with one boundary component, its fundamental group is free of rank $2g$, and its generators $a_1,...,a_g,b_1,...,b_g$ can be chosen so that the boundary is homotopic to the loop defined by the surface word $[a_1,b_1]\cdots [a_g, b_g]$. Applying $f$, we see that $f_*(a_1),...,f_*(a_g), f_*(b_1),...,f_*(b_g)$ is a solution to
$$
[u_1, v_1] \cdots [u_{\cl(w)}, v_{\cl(w)}] = w.
$$
Hence, according to the discussion in Subsection \ref{subsec:alg-intro}, $f_*\pi_1(\S) = \grp{f_*(a_1),...,f_*(a_g), f_*(b_1),...,f_*(b_g)}$ is a commutator critical subgroup of $w$. If $(\S', f')$ is an admissible pair equivalent to $(\S, f)$, we must have $f'_*\pi_1(\S') = f_*\pi_1(\S)$. Indeed, there is a homeomorphism $\rho:\S\to \S'$ such that $f=f'\circ \rho $, so $f_*\pi_1(\S) = f'_* \rho*(\pi_1(\S)) = f'_* \pi_1(\S')$. Summarizing, we see that we can associate a commutator critical subgroup for each equivalence class of admissible pairs. Furthermore, note that for each such admissible pair $(\S, f)$, the map $f_*:\pi_1(\S) \to \pi_1(\Om)$ is injective (since it maps the generators to a free basis of the image). Hence according to \cite[Lemma 5.1]{PuderMageeUn}, $\MCG(f)$ is trivial and so $\chi^{(2)}(\MCG(f)) = 1$. To finish our proof, it remains to show that every commutator critical subgroup is obtained from some admissible pair and that non-equivalent admissible pairs give rise to different commutator critical subgroup.

Let $H\in \CommCrit(w)$ of $w$ and let $g=\cl(w)$. We will construct an admissible pair $(\S, f)$ with $\S$ of genus $g$ and $f_*\pi_1(\S) = H$. By definition, $w=[u_1,v_1]\cdots [u_g, v_g]$ for some generating set $u_1,...,u_g,v_1,...,v_g$ of $H$. Let $\S$ be the surface of genus $g$ with one boundary component. The surface $\S$ is homotopy equivalent to the graph $\bigvee^{2g} S^1$, with the boundary being homotopic to the loop $[a_1, b_1] \cdots [a_g, b_g]$ where the $a_i$ and $b_i$ are the circles of the bouquet. Hence if we define a map $f:\S \to \Om$ by the map $\bigvee^{2g} S^1 \to \Om$ sending $a_i$ to $u_i$ and $b_i$ to $v_i$, we will have that $f_*[\partial\S] = w$, and clearly $f_*\pi_1(\S) = H$, so we are done.

Finally, it remains to show that different equivalence classes give rise to different commutator critical subgroups. Let $H\in\CommCrit(w)$ and suppose two admissible pairs $(\S, f)$ and $(\S', f')$ satisfy that $\text{genus}(\S)=\text{genus}(\S')=\cl(w)$ and $f_*\pi_1(\S) = f'_*\pi_1(\S') = H$. Our goal is to show that they are in fact equivalent. First, as $\S$ and $\S'$ have the same genus, they are homeomorphic. If $\rho:\S \to \S'$ is some homeomorphism between them, we can replace $(\S', f')$ by the equivalent pair $(\S, f'\circ \rho)$. Hence without loss of generality we may assume $\S' = \S$. Let $a_1,...,a_g,b_1,...,b_g$ be generators of $\pi_1(\S)$ so that $[\partial\S] = [a_1,b_1]\cdots[a_g, b_g]$. The subgroup $H=f_*\pi_1(\S) = f'_*\pi_1(\S)$, which is also isomorphic to $\pi_1(\S)$, is free of rank $2g$, and both $f_*(a_1),...,f_*(a_g), f_*(b_1),...,f_*(b_g)$ and $f'_*(a_1),...,f'_*(a_g), f'_*(b_1),...,f'_*(b_g)$ are free bases of it (according to our discussion in the beginning of the proof). Hence there is some $\phi\in\Aut(H)\cong \Aut(\pi_1(\S))$ mapping the first basis to the second. Furthermore, the surface word $w$ is fixed by $\phi$. On the other hand, according to the Dehn-Nielsen-Baer theorem, $\MCG(\S)$ is isomorphic the subgroup of $\Aut(\pi_1(\S))$ fixing $[\partial \S]$ (see \cite[Theorem 2.4]{MageeePuder2015} for a discussion of the Dehn-Nielsen-Baer theorem for surfaces with boundary, or \cite[Chapter 8]{FarbMarg} for the classical version). Hence if we let $\rho\in \MCG(\S)$ be the homeomorphism corresponding to $\phi$, we will have that $f'_* =\rho_*\circ f_*$ by construction. By Proposition \ref{prop:fstar}, $f'\sim \rho\circ f$ (note that in this case $V$ is just one point so $\Pi_1=\pi_1$). Hence $(\S, f)$ and $(\S', f')$ are equivalent pairs (by the homeomorphism $\rho$).
\end{proof}

\section{A better bound for $\EE_w[\tr(A)\tr(A^{-1})]$}\label{sec:pfthm2}
In this section we show that we can obtain a better bound on the asymptotic behaviour of $\EE_w[\chi]$ when $\chi$ is the character $A\mapsto \tr(A)\tr(A^{-1}) - 1$. More precisely, we show that 
$$
\EE_w[\xi_1\xi_{-1}] = 1 + O(n^{2(1-\pi(w))}).
$$

According to \cite[Theorem 2]{DiaconisShershahani}, $\grp{\xi_1\xi_{-1},1} = 1$. Hence, according to Corollary \ref{cor:EwT1}, it is enough to prove that $\chi(\S) \le 2(1-\pi(w))$ for any admissible pair $(\S ,f)\in \mathcal{S}(w, w^{-1})$, in the notation of the corollary.

If $\S$ is not connected, this is quite simple, and in fact holds for any non-connected admissible pair for $(w, w^{-1})$, not just those in $\mathcal{S}(w,w^{-1})$. Indeed, since $\S$ has two boundary components, it must have exactly two connected components, say $\S_1$ and $\S_2$. Each $\S_i$ is a connected surface with one boundary component, which maps to either $w$ or $w^{-1}$ under $f_*$. Hence each $\S_i$ must have genus at least $\cl(w)$ and so $\chi(\S_i) \le 1 - 2\cl(w) \le 1 - \pi(w)$. Thus $\chi(\S) = \chi(\S_1) + \chi(\S_2) \le 2(1-\pi(w))$, as desired. 

If $\S$ is connected, then the fact that $(\S, f) \in \mathcal{S}(w,w^{-1})$ simply means that $\grp{f_*\Pi_1(\S, V)}$ is a non-cyclic algebraic extension of $w$. Thus we prove:

\begin{prop}\label{prop:piwwinv}
If $(\S, f)$ is an admissible pair for $(w, w^{-1})$, $\S$ is connected, and $\grp{f_*\Pi_1(\S, V)}$ is a non-cyclic algebraic extension of $w$, then
$$
\chi(\S) \le 2(1 - \pi(w)).
$$
\end{prop}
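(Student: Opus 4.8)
The plan is to cap off the two boundary components of $\S$ with a single annulus, turning the problem into a statement about a \emph{closed} surface mapping to a graph, and then to play off two estimates for the rank of the image subgroup: a lower bound coming from the algebraic-extension hypothesis, and an upper bound coming from the fact that the fundamental group of a closed surface has small corank.

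First, we may assume $2 \le \pi(w) < \infty$: if $\pi(w) \le 1$ then $2(1-\pi(w)) \ge 0 \ge \chi(\S)$ since $\S$ has two boundary components and hence is not a disk; and the existence of a non-cyclic algebraic extension of $\grp{w}$ forces $\pi(w) < \infty$, so in particular $w$ is a non-power. Now write $A = S^1 \times [0,1]$ for an annulus and glue it to $\S$ by identifying $S^1 \times \{0\}$ with $\partial_1$ and $S^1\times\{1\}$ with $\partial_2$ (aligning the marked points $v_1,v_2$ with the basepoint $x_0$ of $S^1$), extending $f$ to a map $\hat f\colon \hat\S \to \Om$ via $f|_A(x,t) = \gamma(x)$, where $\gamma\colon S^1 \to \Om$ represents $w$; since one boundary of $\S$ reads $w$ and the other $w^{-1}$, this is consistent on the gluing circles and can be arranged compatibly with orientations. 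The resulting surface $\hat\S$ is closed and orientable, and $\chi(\hat\S) = \chi(\S) + \chi(A) - \chi(S^1 \sqcup S^1) = \chi(\S)$, so $\hat\S \cong \Sigma_g$ with $2 - 2g = \chi(\S)$. It therefore suffices to prove $g \ge \pi(w)$.

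Next I identify $H := \hat f_*\pi_1(\hat\S, v_1)$. Topologically $\hat\S$ is obtained from $\S$ by attaching a tube joining its two boundary circles, so $\pi_1(\hat\S, v_1)$ is generated by $\pi_1(\S, v_1)$ together with the class of $\tau = \beta\cdot\bar\alpha$, where $\beta$ is a path in $\S$ from $v_1$ to $v_2$ and $\alpha = \{x_0\}\times[0,1] \subseteq A$ runs straight across the tube from $v_1$ to $v_2$. Because the marked points were aligned with $x_0$ and $\gamma(x_0) = o$, the arc $\alpha$ maps under $\hat f$ to the constant path at $o$, so $\hat f_*[\tau] = f_*[\beta]$. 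Hence $H = \grp{f_*\pi_1(\S, v_1),\, f_*[\beta]}$, which is exactly $\grp{f_*\Pi_1(\S, V)} = f_*\pi_1(\S/V)$. By hypothesis this group is a non-cyclic algebraic extension of $\grp{w}$, so Lemma \ref{lem:lemma2} (with $d=1$, using that $w$ is a non-power) gives $\rk H \ge \pi(w)$.

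Finally, $H$ is a finitely generated free group which is a quotient of $\pi_1(\Sigma_g)$, and the maximal rank of a free quotient of a closed orientable genus-$g$ surface group is $g$: a surjection $\pi_1(\Sigma_g) \twoheadrightarrow F_k$ is induced by a map $\Sigma_g \to \bigvee^k S^1$ (as $\bigvee^k S^1$ is aspherical), which induces an injective ring homomorphism $H^*(\bigvee^k S^1;\ZZ) \hookrightarrow H^*(\Sigma_g;\ZZ)$; since the cup product vanishes on $H^1(\bigvee^k S^1)$, the rank-$k$ image of $H^1$ is isotropic for the intersection form on $H^1(\Sigma_g;\ZZ) \cong \ZZ^{2g}$, and a symplectic lattice of rank $2g$ has no isotropic subgroup of rank exceeding $g$, so $k \le g$. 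Applying this with $k = \rk H$ yields $\pi(w) \le \rk H \le g$, whence $\chi(\S) = \chi(\hat\S) = 2 - 2g \le 2 - 2\pi(w) = 2(1-\pi(w))$, as desired.

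\textbf{Main obstacle.} The one step requiring genuine care is the identification $H = \grp{f_*\Pi_1(\S, V)}$: one must make the ``attach a tube'' picture precise enough to be certain that $\pi_1(\hat\S, v_1)$ acquires exactly the single new generator $\tau$ and nothing more, and that $\hat f_*[\tau] = f_*[\beta]$ — this is precisely where the alignment of the marked points with the basepoint of the $w$-loop on the annulus is used. The Euler-characteristic bookkeeping, the appeal to Lemma \ref{lem:lemma2}, and the corank bound for surface groups are then routine.
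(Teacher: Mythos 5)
Your proof is correct and follows essentially the same route as the paper: close the surface up along its two boundary circles (your annulus gluing yields the same closed surface with the same Euler characteristic), identify the image of its fundamental group with $\grp{f_*\Pi_1(\S, V)}$, and play the lower bound $\rk \ge \pi(w)$ for a non-cyclic algebraic extension against the corank bound for closed surface groups. The only cosmetic difference is that you justify the corank bound via the isotropic-subspace argument for the cup-product form on $H^1$, whereas the paper's Proposition \ref{prop:surfacequo} uses a Stallings-style cut-along-preimages argument; both are standard.
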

\begin{proof}
The surface $\S$ has two boundary components, mapping to $w$ and $w^{-1}$. Consider the closed surface $\S_0$ obtained by gluing the boundaries of $\S$ together. Note that $\chi(\S) = \chi(\S_0)$ since cycles have $\chi = 0$. The orientation of $\S$ guarantees that $f$ descends to a well-defined map on $\S_0$, mapping the circle that was once the boundary to $w$ (or $w^{-1}$, depending on a choice of orientation for the circle).

Let $V=\{v_0, v_1\}$ be the marked points on the boundaries of $\S$. In $\S_0$ they are glued together, and we call this point $v_0$. We claim that $f_*\pi_1(\S_0, v_0) = \grp{f_*\Pi_1(\S, V)}$. To see why, we will construct $\S_0$ in two steps. First, glue $v_0$ and $v_1$ together to obtain a new space $\S'$. We have that $\grp{f_*\Pi_1(\S, V)}=f_*\pi_1(\S', v_0)$: every path between the points of $V$ map to a loop in $\S'$ based at $v_0$, and every loop at $\S'$ based at $v_0$ can be lifted to a sequence of paths on $\S$ between the points of $V$. Hence after applying $f_*$ they generate the same group. Now, the two boundaries of $\S$ are joined together at a point in $\S'$, and to construct $\S_0$ we have to glue them together. Equivalently, we can glue a rectangle such that two opposite sides of it map to the boundary components and the other two side map to $v_0$. By the van Kampen theorem, it is easy to see the difference between $\pi_1(\S', v_0)$ and $\pi_1(\S, v_0)$ is that the two loops representing the boundary components are now homotopic. As they both map to $w$ under $f$ (with the right choice of orientation), the images $f_*\pi_1(\S, v_0)$ and $f_*\pi_1(\S', v_0)$ are the same.

Thus we have constructed a closed surface $\S_0$ with a map $f:\S_0 \to \Om$ such that $H = f_*\pi_1(\S_0)$ is an algebraic extension of $w$. In other words, we have a surjection from a surface group $\pi_1(\S_0)$ onto a free group $H$, which is an algebraic extension of $w$. However, it is well known that a free quotient of a surface group must have rank at most $g$, where $g$ is the genus of the corresponding surface (see the next proposition, Proposition \ref{prop:surfacequo}, for details). Hence $\rk H \le \text{genus}(\S_0)$. However, $\rk H \ge \pi(w)$, and $\chi(\S) = \chi(\S_0)$, so
$$
\chi(\S) = \chi(\S_0) = 2 - 2\cdot\text{genus}(\S_0) \le 2 - 2\rk H \le 2 - 2\pi(w).
$$
\end{proof}
\begin{remark}
It can be shown that there is only one admissible pair contributing to the term $1=\grp{\xi_1\xi_{-1},1}$. This is the annulus $S^1 \times I$, together with the map to $\Om$ which is the composition $S^1\times I \to S^1 \xrightarrow{w} \Om$ (the last map represents $w$ in $\Om$).
\end{remark}

\begin{prop}\label{prop:surfacequo}
Let $S_g$ be the surface group corresponding to a surface of genus $g$. If $S_g \onto F_r$, then $r \le g$.
\end{prop}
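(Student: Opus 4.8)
The plan is to exploit the cup-product structure on the cohomology of the closed oriented surface $\Sigma_g$ with $\pi_1(\Sigma_g) \cong S_g$, together with naturality, to bound $r$. If $g = 0$ then $S_g$ is trivial, so $F_r$ is trivial and $r = 0$; thus we may assume $g \ge 1$, in which case both $\Sigma_g$ and the wedge of circles $\bigvee^r S^1$ are aspherical (a closed surface of genus $\ge 1$ and a finite graph are $K(\pi,1)$'s), so group cohomology of $S_g$ and $F_r$ agrees with the singular cohomology of these spaces. In particular $\dim_\QQ H^1(F_r;\QQ) = r$ and $\dim_\QQ H^1(\Sigma_g;\QQ) = 2g$.

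First I would record that a surjection $\phi\colon S_g \onto F_r$ induces a surjection $S_g^{ab} \onto F_r^{ab}$ on abelianizations, hence — since $H^1(G;\QQ) = \Hom(G^{ab},\QQ)$ — an injection $\phi^*\colon H^1(F_r;\QQ) \into H^1(S_g;\QQ)$. So it suffices to bound the dimension of the image $W := \phi^*H^1(F_r;\QQ)$.

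Next I would bring in Poincaré duality: since $\Sigma_g$ is a closed oriented surface, the cup-product pairing $H^1(\Sigma_g;\QQ) \times H^1(\Sigma_g;\QQ) \to H^2(\Sigma_g;\QQ) \cong \QQ$ is a nondegenerate alternating bilinear form on the $2g$-dimensional space $H^1(\Sigma_g;\QQ)$. On the other hand, $\bigvee^r S^1$ has $H^2 = 0$, so every cup product of two degree-$1$ classes in $H^*(F_r;\QQ)$ vanishes. By naturality of the cup product, $\phi^*\alpha \cup \phi^*\beta = \phi^*(\alpha \cup \beta) = 0$ for all $\alpha, \beta \in H^1(F_r;\QQ)$, so $W$ is an isotropic subspace of the symplectic space $(H^1(\Sigma_g;\QQ), \cup)$. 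Finally I would invoke the standard fact from symplectic linear algebra that an isotropic subspace of a nondegenerate alternating form on a $2g$-dimensional vector space has dimension at most $g$. Combining with the injectivity of $\phi^*$, we get $r = \dim_\QQ W \le g$.

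I do not expect a genuine obstacle here; the two points requiring a line of care are the injectivity of $\phi^*$ on $H^1$ (immediate from surjectivity of $\phi$ on abelianizations) and the identification of group cohomology with space cohomology so that the cup-product form is available (immediate from asphericity). The conceptual heart is simply that a free group has vanishing $H^2$, so its rational $H^1$ pulls back to a totally isotropic subspace, and the nondegenerate alternating form on a genus-$g$ surface caps the dimension of any such subspace at $g$.
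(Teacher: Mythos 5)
Your proof is correct: the surjection $\phi\colon S_g \onto F_r$ does give an injection $\phi^*\colon H^1(F_r;\QQ)\into H^1(S_g;\QQ)$, the vanishing of $H^2$ of a free group together with naturality of cup products makes the image a totally isotropic subspace of the symplectic form on $H^1(\Sigma_g;\QQ)$ given by Poincar\'e duality, and the linear-algebra bound on isotropic subspaces yields $r\le g$. This is, however, a genuinely different route from the one taken in the paper. The paper follows Stallings' corank argument, which stays entirely within the elementary graph-and-surface framework used throughout the rest of the text: one realizes the surjection by a map $\Sigma_g\to\Omega=\bigvee^r S^1$, takes preimages of midpoints of the edges to get a system of simple closed curves, collapses the complementary components to build a graph $\Ga$ through which the map factors, and then compares Euler characteristics, showing $\chi(\Sigma_g)\le 2\chi(\Ga)$ while $\chi(\Ga)\le 1-r$ because $\Ga$ surjects onto $\Omega$ on $\pi_1$. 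Your argument is shorter once one grants singular-versus-group cohomology identifications and the symplectic structure on $H^1$ of a closed oriented surface, and it gives the slightly stronger structural statement that the pulled-back $H^1$ is isotropic; the paper's argument is more elementary (no Poincar\'e duality, no coefficients), is constructive in the sense of exhibiting the intermediate graph explicitly, and deliberately mirrors the arc-and-curve constructions already used in Sections 5--6, which is presumably why the authors chose it. Either proof is acceptable for the proposition as stated.
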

\begin{proof}
This is well known (for example, it appears in \cite[Chapter~I.7]{LyndonSchupp}), but we will sketch a proof anyway for the sake of completeness. Our  description is based on a work of Stallings (\cite{StallingsCorank}).

Let $\S_g$ be a surface of genus $g$, so $S_g \cong \pi_1(\S_g)$. Any surjective map $S_g \onto F_r$ gives rise to a surjection $f:\S_g \onto \Om$. Take the pre-images of the midpoints of the edges of $\Om$. This is a collection of simple curves $C_i$ on $\S_g$ (perhaps after perturbing $f$ a little by a homotopy). They separate $\S_g$ into several connected components, each being mapped to $\Om$ by a map homotopic to the trivial map. However, when crossing each curve we complete a loop around the relevant edge of $\Om$. Thus, we can take a small tubular neighborhood around each curve and assume it maps to the relevant edge, and that all the other points, outside these neighborhoods, map to the basepoint.

Consider the graph $\Ga$ whose vertices correspond to the connected components of $\S_g - \cup C_i$ and whose edges corresponds to the curves $C_i$. The map $\S_g \to \Om$ then factors through $\Ga$ - the connected components map to the vertices, and the tubular neighborhoods map to the edges (the curves themselves will be mapped to, say, the midpoints of the edges). 

Since we have a surjection $\Ga\to \Om$, we must have that $\rk \pi_1(\Ga) \ge \rk \pi_1(\Om) = r$. Hence $\chi(\Ga) \le 1 - r$. On the other hand, we claim that $\chi(\S) \le 2\chi(\Ga)$. This will complete the proof, as $\chi(\S) = 2 - 2g$ and so we will get that $2 - 2g \le 2 (1 - r)$. To bound $\chi(\S)$, let the connected components of $\S_g - \cup C_i$ be denoted $\S_j$, and suppose that $\S_j$ has genus $g_j$ and exactly $b_j$ boundary components. Then 
$$
\chi(\S_g) = \sum_j \chi(\S_j) = \sum_j (2 - 2g_j - b_j) \le \sum_j (2 - b_j) = 2\#\{\S_j\} - \sum_j b_j
$$
Note that by definition, $\#\{S_j\}$ is the number of vertices of $\Ga$ and $\sum_j b_j$ is twice the number of edges of $\Ga$, so
$$
\chi(\S_g) \le 2V_\Ga- 2E_\Ga= 2\chi(\Ga),
$$
and we are done.
\end{proof}
\begin{remark}
We sketch a more conceptual argument for the last part of the proof, namely, after constructing $\Ga$: every cycle of $\Ga$ gives a handle of $\S_g$, and thus $g \ge \rk \pi_1(\Ga)$, but we showed that $\rk \pi_1(\Ga) \ge r$.
\end{remark}

\bibliographystyle{amsalpha}
\bibliography{refs}

\end{document}